\numberwithin{equation}{section}
\numberwithin{figure}{section}
\theoremstyle{plain}
\newtheorem{thm}{\protect\theoremname}
\theoremstyle{plain}
\newtheorem{lem}[thm]{\protect\lemmaname}
\theoremstyle{plain}
\newtheorem{cor}[thm]{\protect\corollaryname}
\theoremstyle{definition}
\newtheorem{defn}[thm]{\protect\definitionname}
\theoremstyle{plain}
\newtheorem{prop}[thm]{\protect\propositionname}
\theoremstyle{definition}
\newtheorem{example}[thm]{\protect\examplename}
\theoremstyle{remark}
\newtheorem{rem}[thm]{\protect\remarkname}
\DeclareTextSymbolDefault{\textquotedbl}{T1}
\theoremstyle{plain}
\DeclareMathOperator{\OF}{\Omega_F}
\DeclareMathOperator{\dOF}{\partial\Omega_F}
\DeclareMathOperator{\OS}{\Omega_S}
\DeclareMathOperator{\dOS}{\partial\Omega_S}
\DeclareMathOperator{\dO}{\partial\Omega}
\DeclareMathOperator{\Lp}{L}
\DeclareMathOperator{\H1}{H}
\DeclareMathOperator{\C}{C}
\DeclareMathOperator{\Div}{div}
\DeclareMathOperator{\Span}{span}
\newcommand{\oxi}{\bar{\xi}}
\newcommand{\oq}{\bar{q}}
\newcommand*{\dx}{\mathop{}\!\mathrm{d}}
\providecommand{\corollaryname}{Corollary}
\providecommand{\definitionname}{Definition}
\providecommand{\lemmaname}{Lemma}
\providecommand{\remarkname}{Remark}
\providecommand{\theoremname}{Theorem}
\providecommand{\corollaryname}{Corollary}
\providecommand{\definitionname}{Definition}
\providecommand{\examplename}{Example}
\providecommand{\lemmaname}{Lemma}
\providecommand{\propositionname}{Proposition}
\providecommand{\remarkname}{Remark}
\providecommand{\theoremname}{Theorem}
\begin{document}
\title[Long-time fluid-structure interaction]{Global existence and convergence to pressure waves in nonlinear fluid-structure
interaction}
\author{Karoline Disser}
\author{Michelle Luckas}
\address{Universität Kassel\\
 Institut für Mathematik \\
 Heinrich-Plett-Stra{ß}e 40 \\
 34132 Kassel, Germany }
\email{kdisser@mathematik.uni-kassel.de}
\email{mluckas@mathematik.uni-kassel.de}
\keywords{fluid-structure interaction, global solutions, non-trivial long-time
dynamics, pressure waves}
\begin{abstract}
We consider a non-linear system modelling the dynamics of a linearly
elastic body immersed in an incompressible viscous fluid, without
damping on the elastic part. We prove local existence of strong solutions
and global existence and uniqueness for small data. At the same time,
depending on the geometric setting, non-trivial time-periodic solutions,
called pressure waves, may persist. Our main result is the characterization
of long-time behaviour of the elastic displacement: up to small rigid
motions, either the system comes to rest or converges to a pressure
wave. 
\end{abstract}

\thanks{The authors would like to thank Dorothee Knees for helpful discussions
and input on the topic. The work of Michelle Luckas was financially
supported by ``Studienstiftung des deutschen Volkes''. }
\subjclass[2000]{74F10 (76D05 35A01 35L10)}
\maketitle

\section{Introduction}

The dynamics of a linearly elastic body immersed in an incompressible
viscous fluid is modelled by the system

\begin{eqnarray}
\begin{cases}
\begin{array}{rcll}
\dot{u}+(u\cdot\nabla)u-\Div(\sigma(u,p)) & = & 0 & \textrm{in }(0,T)\times\OF(0,T),\\
\Div(u) & = & 0 & \textrm{in }(0,T)\times\OF(0,T),\\
(\sigma(u,p)\circ X)\mathrm{Cof}(\nabla X)n & = & \Sigma(\xi)n & \textrm{on }(0,T)\times\dOS,\\
u\circ X & = & \dot{\xi} & \textrm{on }(0,T)\times\dOS,\\
u & = & 0 & \text{on }(0,T)\times\dO,\\
\ddot{\xi}-\textrm{div}(\Sigma(\xi)) & = & 0 & \textrm{in }(0,T)\times\OS,\\
\dot{X} & = & u\circ X & \text{in }(0,T)\times\OF,
\end{array}\end{cases}\label{eq:fullsystem-1}
\end{eqnarray}
With initial conditions 
\begin{eqnarray*}
u(0)=u_{0}\quad\text{ and} & X(0)=\mathrm{Id}, & \text{in }\OF,\\
\xi(0)=\xi_{0}\quad\text{ and} & \dot{\xi}(0)=\xi_{1}, & \text{in }\OS,
\end{eqnarray*}
given on a bounded domain $\Omega\subset\mathbb{R}^{3}$. Here, in
Eulerian coordinates, the Navier-Stokes equations are modelled on
a time-dependent domain $\OF(t)=\Omega\setminus\overline{\OS(t)}$
that changes according to the elastic displacement. On the fluid-solid
interface $\dOS$, the flow map $X$ enables the formulation of the
standard transmission conditions of continuity of forces and of velocities
in Lagrangian coordinates. The fluid is Newtonian with viscosity $\nu>0$
and stress tensor 
\[
\sigma(u,p):=2\nu\varepsilon(u)-p\mathrm{Id}
\]
and the structure is linearly elastic with Lamé constants $\lambda_{1},\lambda_{2}>0$
and stress tensor 
\[
\Sigma(\xi):=2\lambda_{1}\varepsilon(\xi)+\lambda_{2}\Div(\xi)\mathrm{Id},
\]
where 
\[
\varepsilon(v):=\frac{1}{2}\left(\nabla v+(\nabla v)^{T}\right)
\]
is the symmetric gradient.

In this work, we simplify \eqref{eq:fullsystem-1} to the system
\begin{eqnarray}
\begin{cases}
\begin{array}{rcll}
\dot{u}+(u\cdot\nabla)u-\Div(\sigma(u,p)) & = & 0 & \textrm{in }(0,T)\times\OF,\\
\Div(u) & = & 0 & \textrm{in }(0,T)\times\OF,\\
\sigma(u,p)n & = & \Sigma(\xi)n & \textrm{on }(0,T)\times\dOS,\\
u & = & \dot{\xi} & \textrm{on }(0,T)\times\dOS,\\
u & = & 0 & \text{on }(0,T)\times\dO,\\
\ddot{\xi}-\textrm{div}(\Sigma(\xi)) & = & 0 & \textrm{in }(0,T)\times\OS,\\
u(0) & = & u_{0} & \textrm{in }\OF,\\
\xi(0) & = & \xi_{0} & \textrm{in }\OS,\\
\dot{\xi}(0) & = & \xi_{1} & \textrm{in }\OS,
\end{array}\end{cases}\label{eq:nonlin_system}
\end{eqnarray}
with a fixed fluid domain and the corresponding relation $\overline{\OS}\cup\OF=\Omega$.
From a modelling point of view, this corresponds to transferring the
principle of geometric linearization underlying the assumption of
linear elasticiy to the fluid domain. We denote the exterior unit
normal vector field of $\OS$ at $\dOS$ by $n$. The unkowns are
the fluid velocity $u\colon(0,T)\times\OF\to\mathbb{R}^{3}$, fluid
pressure $p\colon(0,T)\times\OF\to\mathbb{R}^{3}$ and elastic displacement
$\xi\colon(0,T)\times\OS\to\mathbb{R}^{3}$. 

Our motivation is to study long-term viscous damping and show that
due to fluid viscosity and the interaction of fluid and structure
through the boundary, either elastic displacement of the structure
will disappear over time, or the way in which it persists can be characterized.
More precisely, we show that unique strong solutions exist globally
in time, if the initial data are small, and that 
\begin{equation}
\lim_{t\to\infty}\Vert u(t)\Vert_{\H1^{1}(\OF)}=0,\label{ulong}
\end{equation}
as well as, 
\begin{equation}
\lim_{t\to\infty}\Vert\xi(t)-\eta^{*}(t)-\varphi_{N}^{0}-r(t)\Vert_{\H1^{1}(\OS)}=0.\label{xilong}
\end{equation}
Here, the constant (in time) function $\varphi_{N}^{0}$ is known
a priori and satisfies 
\[
\begin{cases}
\Div\Sigma(\varphi)=0, & \text{in }\OS,\\
\Sigma(\varphi)n=q_{0}n, & \text{on }\dOS,
\end{cases}
\]
with some constant $q_{0}\in\mathbb{R}$. In particular, if $\Sigma(\xi_{0})=0$,
then $\varphi_{N}^{0}=0$, so this is a correction for $\OS$ being
initially deformed. The term $r(t)\in\mathrm{ker}\,\Sigma$ is a rigid
velocity in the kernel of the linear elastic stress tensor,for which
we can show convergence in rate,
\[
\lim_{t\to\infty}\dot{r}(t)=\lim_{t\to\infty}\ddot{r}(t)=0.
\]
The function $\eta^{*}$ in (\ref{xilong}) is the most interesting
one: it corresponds to internal deformation of $\OS$ while $u=0$,
and thus $\dot{\xi}|_{\dOS}=0$. We call it a \emph{pressure wave}.
For many geometries of $\OS$, $\eta^{*}\neq0$ cannot occur. If $\OS$
is the ball, there are examples of non-trivial time-periodic $\eta^{*}$.
So the answer to the question of long-term viscous damping is a mixed
one, with a global geometric aspect. 

Pressure waves also provide solutions to system \eqref{eq:fullsystem-1},
so they must be accounted for in a global analysis of this system
as well. Details are given in Section \ref{SecXi}.

\subsection{Related works and comparison to known results}

\subsubsection{Results on the full system \eqref{eq:fullsystem-1} and related works}

We refer to \cite{Boulakia2007,CS2005,KT2012,IKLT2017,AB2015,RV2014,KO2023,KO2024,BKS2024}
for results on the existence and uniqueness of weak and strong solutions
to the full system \eqref{eq:fullsystem-1} in two and three spatial
dimensions. To our knowledge, global existence of solutions of \eqref{eq:fullsystem-1}
(even for small data), is not known. 

Global existence for \eqref{eq:fullsystem-1} was obtained in the
case of additional damping of the elastic structure: In \cite{IKLT2017,KO2023,KO2024},
global existence of solutions for small data and exponential convergence
to the rest state is shown with the Lamé equations replaced by a wave
equation with damping $\alpha\dot{\xi}$, for $\alpha>0$. Due to
the fact that pressure waves also solve \eqref{eq:fullsystem-1},
see Remark \ref{rem:pwsolve11}, it is clear that no analogous result
holds for the undamped system. In \cite{BKS2024}, global existence
of weak solutions is established for a system correpsonding to \eqref{eq:fullsystem-1}
in the nonlinear viscoelastic case, using variational methods. 

In \cite{BGT2019}, see also \cite{RV2014} and \cite{GHL2019} for
related methods, it was shown that the full system \eqref{eq:fullsystem-1}
without additional damping admits local strong solutions that preserve
initial regularity over time. This gives rise to the hope of finding
global strong solutions for small data. System \eqref{eq:nonlin_system}
is simpler to analyse, in particular due to the fact that the dynamics
and regularity of the interface $\partial\OS$ are not an issue. At
the same time, the structural difficulty of a parabolic-hyperbolic-type
coupling, some of the non-linearity of System \eqref{eq:fullsystem-1}
and the non-trivial long-time behaviour remains. In this sense, our
results are a first step towards analysing the undamped system \eqref{eq:fullsystem-1}
for large times. 

Important related models concern the interaction with elastic shells,
elastic beams, compressible or inviscid fluids. We refer to \cite{CS2006,LR2014,MMNRT2022,GH2016,GHL2019,MC2015,KT2024,AKT2025,BKS2024compressible,MRR2020}
and references therein. 

\subsubsection{Results on the linearized system}

The linearization of systems \eqref{eq:fullsystem-1} and \eqref{eq:nonlin_system}
was analyzed by Avalos and Triggiani \cite{4AT2009,5AT,8AT2009,AB2015,Astrongstab,AT2,ATboundary,ATuniformstab},
see also \cite{GMZZ2014}. In particular, they established the existence
of a (non-analytic) semigroup associated to this problem and characterized
its long-time behaviour based on properties of the spectrum of its
generator. For a large class of \emph{good domains} $\OS$, as well
as in the case of additional damping, they showed strong stability
of the semigroup \@ \cite{5AT,ATuniformstab,ATboundary,8AT2009,4AT2009,AT2,Astrongstab}.
This is consistent with \eqref{ulong} and \eqref{xilong}, as the
term $\varphi_{N}^{0}$ corresponds to an one-dimensional invariant
subspace of states, and in the semigroup approach, the rigid motions
$r(t)$ are taken care of by shifting the Lamé operator, removing
its kernel. Moreover, \emph{good domains} are exatly the ones that
guarantee $\eta^{*}=0$, cf. \cite{5AT,ATuniformstab,ATboundary,8AT2009,4AT2009,AT2,Astrongstab}
and the discussion in Section \ref{SecXi}. Our characterization of
long-time asymptotic behaviour of solutions in \eqref{xilong} extends
these stability results to a related statement in the general case
of \emph{bad domains}, when pressure waves $\eta^{*}\neq0$ may occur.
It determines the attractor and provides convergence to a specific
solution in this case. 

\subsubsection{Results related to System \eqref{eq:nonlin_system} }

We refer to \cite{KTZ2011} for the proof of local existence of weak
solutions to system (\ref{eq:nonlin_system}). In \cite{DL2022},
we proved global existence for small data in the two-dimensional case. 

\subsection{Further discussion }

The two main results of this paper are the existence of a unique global
solution in the case of small initial data, Theorem \ref{thm:globale_existenz},
and the characterization of the long-time behaviour of the displacement
in Theorem \ref{thm:main result xi}. A first step in the analysis
is to establish a functional analytic setting that can handle the
mixed parabolic-hyperbolic character of the problem. In the existence
proof, we use the key techniques that have been established for these
mixed systems: optimal regularity of the mixed Dirichlet and Neumann-Stokes
system \cite{BP2007,GS1991}, hidden regularity for the Dirichlet-Lamé
system \cite{LLT1986,KTZ2011}, and additional approximation arguments
that shift between regularity levels, following the approach in \@
\cite{BGT2019} for System \eqref{eq:fullsystem-1}. Here, the setting
of \cite{BGT2019} is adapted to weaker norms, so that global energy
estimates and first-order energy estimates are sufficient for globally
extending the solution. On the other hand, the norms are stronger
than in the semigroup approach of Avalos and Triggiani, so that the
nonlinearity can be handled. 

Probably due to the geometric aspect of the long-time behaviour, we
do not have a direct proof of \eqref{xilong} using dynamical systems
theory. Instead, the method seems to be ad-hoc, essentially using
the same tools but in a technical way: the eigenmoode decomposition
of candidate limit pressure waves, energy estimates, continuous dependence
of the data, and compactness arguments. We hope that it can be generalized
to apply to other similar systems, e.g. with a mixed hyperbolic-parabolic
character and of transmission type, \cite{GMZZ2014}. 

In system (\ref{eq:nonlin_system}), domains do not change, so the
difficult situation of contact of structure and outer wall does not
arise explicitly. We refer to \cite{CGLMTW2019,MAA2018,CGH2021} for
the analysis of contact problems in this context. 

A full characterization of good and bad domains is, to our knowledge,
not available yet. The most complete results are in \cite{5AT,AT2,ATboundary},
cf. Section \ref{SecXi}. For corresponding considerations on the
influence of geometry on the controllability of the system, we refer
to \cite{OP1999}. 

The paper is organized as follows: In Section \ref{SecLocal}, local
existence of strong solutions is established. In Section \ref{SecGlobal},
it is shown that solutions extend to be global-in-time, if the initial
data are sufficiently small. In Section \ref{SecXi}, the main result
on long-time behaviour of $\xi$ is introduced. Sections \ref{SecTilde}
and \ref{SecProof} contain the proof.

\section{Local existence of solutions}

\label{SecLocal}

\label{locEx} The existence of strong solutions $(u,p,\xi)$ to (\ref{eq:nonlin_system})
up to time $T>0$ is shown in the spaces 
\begin{align*}
u\in X_{T}:= & \Lp^{2}(\H1^{2}(\OF))\cap\H1^{1}(\H1^{1}(\OF))\cap\C^{1}(\Lp^{2}(\OF)),\\
p\in Y_{T}:= & \Lp^{2}(\H1^{1}(\OF)),\\
\xi\in Z_{T}:= & \C^{0}(\H1^{2}(\OS))\cap\C^{1}(\H1^{1}(\OS))\cap\C^{2}(\Lp^{2}(\OS)),
\end{align*}
where the time interval $(0,T)$ for the Sobolev spaces and $[0,T]$
for the spaces of uniformly continuous functions is omitted whenever
possible. We also show that these solutions satisfy
\begin{align}
u & \in\C^{0}(\H1^{2}(\OF)),\label{addReg}\\
p & \in\C^{0}(\H1^{1}(\OF)).\nonumber 
\end{align}
In this regularity class, the initial data 
\begin{equation}
u_{0}\in\H1^{2}(\OF),\,\xi_{0}\in\H1^{2}(\OS),\,\xi_{1}\in\H1^{1}(\OS)\label{idata}
\end{equation}
must satisfy the following compatibility conditions: There are
\begin{align}
u_{1},\,p_{0}\in\H1^{1}(\OF)\text{ and }\xi_{2}\in\Lp^{2}(\OS)\label{idata_2}
\end{align}
such that the equations 
\begin{equation}
\begin{cases}
\begin{array}{rcll}
u_{1}-\Div(\sigma(u_{0},p_{0})) & = & -(u_{0}\cdot\nabla)u_{0} & \text{in }\OF,\\
\Div(u_{0}) & = & 0 & \text{in }\OF,\\
\Div(u_{1}) & = & 0 & \text{in }\OF,\\
\sigma(u_{0},p_{0})n & = & \Sigma(\xi_{0})n & \textrm{on }\ensuremath{\partial\Omega_{S}},\\
u_{0} & = & \xi_{1} & \text{on }\dOS,\\
u_{0} & = & 0 & \text{on }\dO,\\
u_{1} & = & 0 & \text{on }\dO,\\
\xi_{2}-\Div(\Sigma(\xi_{0})) & = & 0 & \text{in }\OS,
\end{array}\end{cases}\label{eq:compatibility_thm}
\end{equation}
hold. These conditions are also sufficient. The local existence result
is the following. 
\begin{thm}
\label{thm:lokale_existenz} Let the initial data $u_{0},\xi_{0}$
and $\xi_{1}$ be given such that (\ref{idata}) -- (\ref{eq:compatibility_thm})
are satisfied. Then there exists a time 
\[
T=T\left(\Vert u_{0}\Vert_{\H1^{1}(\OF)},\Vert u_{1}\Vert_{\Lp^{2}(\OF)},\Vert\varepsilon(\xi_{0})\Vert_{\Lp^{2}(\OS)},\Vert\xi_{1}\Vert_{\H1^{1}(\OS)},\Vert\xi_{2}\Vert_{\Lp^{2}(\OS)}\right)>0
\]
such that system (\ref{eq:nonlin_system}) admits a unique strong
solution 
\[
u\in\C^{0}(\H1^{2}(\OF))\cap X_{T},p\in\C^{0}(\H1^{1}(\OF))\text{ and }\xi\in Z_{T}.
\]
\end{thm}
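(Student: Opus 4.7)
The plan is to prove local existence via a fixed-point argument that decouples the Stokes and Lamé subsystems and couples them only through their boundary data on $\dOS$. Starting from an iterate $(\tilde u,\tilde p,\tilde\xi)\in X_T\times Y_T\times Z_T$ that reproduces the prescribed initial data $(u_0,p_0,\xi_0,\xi_1)$, I would first solve the Lamé wave problem with Neumann data inherited from the fluid stress,
\[
\ddot\xi-\Div\Sigma(\xi)=0\text{ in }\OS,\qquad \Sigma(\xi)n=\sigma(\tilde u,\tilde p)n\text{ on }\dOS,\qquad \xi(0)=\xi_0,\ \dot\xi(0)=\xi_1,
\]
and then solve the non-stationary Stokes problem with Dirichlet datum transported from the structure,
\[
\dot u-\Div\sigma(u,p)=-(\tilde u\cdot\nabla)\tilde u,\quad \Div u=0,\quad u|_{\dOS}=\dot\xi,\quad u|_{\dO}=0,\quad u(0)=u_0.
\]
This defines a map $\Phi\colon(\tilde u,\tilde p,\tilde\xi)\mapsto(u,p,\xi)$, and any fixed point of $\Phi$ solves (\ref{eq:nonlin_system}). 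Uniqueness will follow from an energy estimate on the difference of two solutions at the end.

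The core linear bounds come from maximal $\Lp^2$-regularity of the Dirichlet-Stokes operator \cite{BP2007, GS1991} and the hidden boundary regularity of the Lamé system (Theorem \ref{thm:lame_LLT}). The compatibility conditions (\ref{eq:compatibility_thm}) ensure that the boundary data match at $t=0$: specifically, $\Sigma(\xi_0)n=\sigma(u_0,p_0)n$ guarantees that the Neumann trace fed to Lamé starts compatible, and $u_0|_{\dOS}=\xi_1=\dot\xi(0)|_{\dOS}$ guarantees the same for the Stokes Dirichlet datum; the extra data $u_1,p_0,\xi_2$ provide the right endpoints for $\dot u(0)$, the pressure trace, and $\ddot\xi(0)$ required to parametrize the closed ball on which $\Phi$ acts. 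I would then fix a closed ball in $X_T\times Y_T\times Z_T$ centered at a reference trajectory reproducing these initial conditions and show that $\Phi$ maps it to itself and is a contraction, extracting a $T^\alpha$ smallness factor for both the nonlinear term (estimated in the Appendix) and the boundary exchange.

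The main obstacle is to close the trace-regularity loop between the two subsystems in the right norm. On one side, for $(\tilde u,\tilde p)\in X_T\times Y_T$ the Neumann datum $\sigma(\tilde u,\tilde p)n$ is only of distributional regularity on $\dOS$; this is precisely the setting where the Lamé hidden-regularity result of \cite{LLT1986, KTZ2011} is needed to conclude $\xi\in Z_T$ together with a boundary trace $\dot\xi|_{\dOS}$ of sufficient space-time regularity. On the other side, this trace has to serve as the Stokes Dirichlet datum in such a way that maximal $\Lp^2$-regularity yields $(u,p)\in X_T\times Y_T$ with a constant controlled uniformly in $T$. Balancing these two sides while keeping track of the compatibility conditions throughout the iteration is the delicate point, and will likely require the approximation arguments mentioned in the Appendix to justify the formal computations at the level of smooth data before passing to the limit.

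Once the fixed point is obtained, the additional regularity (\ref{addReg}) is extracted from the equation itself: differentiating the Stokes system in time and applying elliptic regularity to the stationary Stokes problem at each $t$, using $\dot u\in\C^0(\Lp^2(\OF))$ together with $(u\cdot\nabla)u\in\C^0(\Lp^2(\OF))$ (via $\H1^2\hookrightarrow\Lp^\infty$ in three dimensions) and the Neumann trace $\Sigma(\xi)n\in\C^0(\H1^{1/2}(\dOS))$ yields $u\in\C^0(\H1^2(\OF))$ and the associated pressure $p\in\C^0(\H1^1(\OF))$. Uniqueness then follows by running the same Stokes-Lamé energy estimates on the difference of two solutions and applying Gronwall.
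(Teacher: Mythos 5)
Your iteration scheme is genuinely different from the paper's, and the difference is where the gap lies. The paper does \emph{not} decouple the Stokes and Lam\'e subsystems at the boundary: it takes the fully coupled linear system (\ref{eq:lin_system}) with a generic fluid forcing $f$ as the basic solvable object (Theorem \ref{lem:lin_regulaer}, from \cite{BGT2019}, then weakened to the data class (\ref{eq:AW}) in Lemma \ref{lem:lin} by approximation), and iterates \emph{only} on the convective nonlinearity, $\tilde u\mapsto f=(\tilde u\cdot\nabla)\tilde u\mapsto u$. This is essential: the global-in-time-usable estimates (\ref{EE}) and (\ref{eq:Kgleichung}) rest on the exact cancellation of the boundary work terms $\int_{\dOS}\sigma(u,p)n\cdot\dot\xi$ between the fluid and structure tests, which only happens when the interface coupling is satisfied exactly at each linear solve. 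In your scheme that cancellation is destroyed, and the contraction would have to absorb the boundary exchange itself — but the boundary exchange carries no factor $T^{\alpha}$; only the nonlinear term does. Your claim that you can extract $T^{\alpha}$ smallness "for the boundary exchange" is unsubstantiated and is precisely the step that fails.

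There is a second, more technical obstruction. The hidden regularity result you invoke (Theorem \ref{thm:lame_LLT}, from \cite{LLT1986, KTZ2011}) concerns the \emph{Dirichlet} problem for the Lam\'e system: given Dirichlet data, the solution's Neumann trace $\Sigma(\eta)n$ has unexpectedly good regularity. It is not a solvability or regularity statement for the Lam\'e system with prescribed \emph{Neumann} data, which is what your first half-step requires. For $(\tilde u,\tilde p)\in X_T\times Y_T$ the trace $\sigma(\tilde u,\tilde p)n$ lives only in roughly $\Lp^2(\H1^{1/2}(\dOS))$, and the hyperbolic Neumann-to-Dirichlet map is known to lose derivatives on general domains, so the resulting $\dot\xi|_{\dOS}$ will not be regular enough to serve as a Dirichlet datum producing $(u,p)\in X_T\times Y_T$ via maximal regularity. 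The loop does not close, and no small-$T$ factor rescues it. Your Step 4 (bootstrapping the extra regularity (\ref{addReg}) from elliptic Stokes theory) and the Gronwall uniqueness argument do match the paper, but the existence core needs to be rebuilt around the monolithic linear solve.
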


We refer to \cite{DL2022} for the proof of a similar result in the
two-dimensional setting. Substantial differences particularly appear
in the non-linear estimates. The proof can be divided into four steps.
\smallskip{}

\textbf{Step 1: More regular solutions to the linearized system} \\
We consider the linearized system 
\begin{equation}
\begin{cases}
\begin{array}{rcll}
\dot{u}-\Div(\sigma(u,p)) & = & f & \textrm{in }(0,T)\times\OF,\\
\Div(u) & = & 0 & \textrm{in }(0,T)\times\OF,\\
\sigma(u,p)n & = & \Sigma(\xi)n & \textrm{on }(0,T)\times\dOS,\\
u & = & \dot{\xi} & \textrm{on }(0,T)\times\dOS,\\
u & = & 0 & \text{on }(0,T)\times\dO,\\
\ddot{\xi}-\Div(\Sigma(\xi)) & = & 0 & \textrm{in }(0,T)\times\OS,\\
u(0) & = & u_{0} & \textrm{in }\OF,\\
\xi(0) & = & \xi_{0} & \text{in }\OS,\\
\dot{\xi}(0) & = & \xi_{1} & \textrm{in }\OS.
\end{array}\end{cases}\label{eq:lin_system}
\end{equation}
and define the following auxiliary spaces of higher regularity: 
\begin{align*}
\tilde{X}_{T}:= & \Lp^{2}(\H1^{5/2+1/16}(\OF))\cap\H1^{1}(\H1^{2}(\OF))\cap\H1^{2}(\H1^{1}(\OF)),\\
\tilde{Y}_{T}:= & \Lp^{2}(\H1^{3/2+1/16}(\OF))\cap\H1^{1}(\H1^{1}(\OF)),\\
\tilde{Z}_{T}:= & \Lp^{2}(\H1^{5/2+1/16}(\OS))\cap\C^{1}(\H1^{3/2+1/16}(\OS))\cap\C^{2}(\H1^{1/2+1/16}(\OS))\cap\C^{3}(\H1^{-1/2+1/16}(\OS)).
\end{align*}
We refer to the Appendix for a precise definition and properties of
these Sobolev-Slobodecki and Bochner spaces. The following existence
result holds. 
\begin{thm}
\label{lem:lin_regulaer} Let 
\begin{align}
\begin{split}(u_{0},\,u_{1},\,p_{0},\,\xi_{0},\,\xi_{1},\,\xi_{2},\,f)\in & \,\H1^{5/2+1/16}(\OF)\times\H1^{1}(\OF)\times\H1^{3/2+1/16}(\OF)\\
 & \times\H1^{5/2+1/16}(\OS)\times\H1^{3/2+1/16}(\OS)\times\H1^{1/2+1/16}(\OS)\\
 & \times\left(\Lp^{2}(\H1^{1/2+1/16}(\OF))\cap\H1^{1}(\Lp^{2}(\OF))\right)
\end{split}
\end{align}
be such that the compatibility conditions in (\ref{eq:compatibility_thm})
are satisfied with $(u_{0}\cdot\nabla)u_{0}$ replaced by $-f(0)$.
Assume additionally that 
\begin{align}
\begin{array}{rcll}
u_{1} & = & \xi_{2} & \textrm{on }\dOS.\end{array}\label{eq:compatibility_step1}
\end{align}
Then for every $T>0$, the linear system (\ref{eq:lin_system}) admits
a unique solution $(u,p,\xi)\in\tilde{X}_{T}\times\tilde{Y}_{T}\times\tilde{Z}_{T}$. 
\end{thm}

The proof of Theorem \ref{lem:lin_regulaer} is based on the ideas
in the proof of a corresponding result for the linearization of system
\eqref{eq:fullsystem-1} in \cite{BGT2019} and \cite{RV2014}. In
particular, this includes tricks on how to deal with the parabolic-hyperbolic
coupling. Compared to \cite[Proposition 1.5]{BGT2019}, for our simpler
system, we additionally prove the existence of solutions on arbitrary
time intervals and for non-trivial initial displacements $\xi_{0}\neq0$.
\\
For estimating the transmission-type boundary conditions it is natural
to define the space for every $\theta\geq0$, 
\[
\mathrm{M}^{\theta}(\dOS):=\H1^{\theta}(0,T;\Lp^{2}(\dOS))\cap\Lp^{2}(0,T;\H1^{\theta}(\dOS)).
\]
We repeatedly use the following trace estimate.
\begin{lem}
\label{lem:auxiliaryLinear} Let $v\in\tilde{X}_{T}$. Then there
exist $C,\alpha>0$ such that 
\[
\Vert v\Vert_{\mathrm{M}^{3/2+1/16}(\dOS)}+\left\Vert \int_{0}^{t}v\,\mathrm{d}s\right\Vert _{\mathrm{M}^{2+1/16}(\dOS)}\leq CT^{\alpha}\left(\Vert v(0)\Vert_{\H1^{2+1/16}(\OF)}+\Vert\dot{v}(0)\Vert_{\H1^{3/4}(\OF)}+\Vert v\Vert_{\tilde{X}_{T}}\right).
\]
\end{lem}

\begin{proof}
By the usual trace estimates for $\dOS$ as part of the boundary of
$\OF$, using \eqref{eq:est_appendix_BGT} and Lemma \ref{lem:est_interpolation_BGT}a)
with $s=3/4$, $\sigma_{1}=2$ and $\sigma_{2}=5/2+1/16$, we obtain
\[
\Vert v\Vert_{\Lp^{2}(\H1^{3/2+1/16}(\Gamma))}\leq CT^{\alpha}\left(\Vert v\Vert_{\tilde{X}_{T}}+\Vert v(0)\Vert_{\H1^{2+1/16}(\OF)}\right).
\]
Analogously, using \eqref{eq:est_appendix_BGT} and Lemma \ref{lem:est_interpolation_BGT}a)
with $s=5/8$, $\sigma_{1}=2$ and $\sigma_{2}=5/2+1/16$, we obtain 

\[
\Vert v\Vert_{\H1^{3/2+1/16}(\Lp^{2}(\dOS))}\leq CT^{\alpha}\left(\Vert v\Vert_{\tilde{X}_{T}}+\Vert v(0)\Vert_{\H1^{3/4}(\OF)}+\Vert\dot{v}(0)\Vert_{\H1^{3/4}(\OF)}\right).
\]
Regarding the second term, note that 
\[
\left\Vert \int_{0}^{t}v\,\mathrm{d}s\right\Vert _{\Lp^{2}(\H1^{2+1/16}(\dOS))}\leq CT\Vert v\Vert_{\Lp^{2}(\H1^{5/2+1/16}(\dOS))},
\]
and that using \eqref{eq:est_appendix_BGT}, we obtain
\begin{align*}
\left\Vert \int_{0}^{t}v\,\mathrm{d}s\right\Vert _{\H1^{2+1/16}(\Lp^{2}(\dOS))} & \leq C\left((T+1)\Vert v\Vert_{\Lp^{2}(\H1^{3/4}(\OF))}+\Vert\dot{v}\Vert_{\H1^{1/16}(\H1^{3/4}(\OF))}\right)\\
 & \leq C\Big((T^{2}+T)\Vert v-v(0)\Vert_{\Lp^{2}(\H1^{3/4}(\OF))}+(T^{3/2}+T^{1/2})\Vert v(0)\Vert_{\H1^{3/4}(\OF)}\\
 & \qquad T^{1/2+1/16}\Vert\dot{v}-\dot{v}(0)\Vert_{\H1^{5/8}(\H1^{3/4}(\OF))}+T^{1/2}\Vert\dot{v}(0)\Vert_{\H1^{3/4}(\OF)}\Big).
\end{align*}
Now we can conclude by using Lemma \ref{lem:est_interpolation_BGT}a)
with $s=5/8$, $\sigma_{1}=0$ and $\sigma_{2}=2$.
\end{proof}
In order to prove Theorem \ref{lem:lin_regulaer}, let $\hat{u}\in\tilde{X}_{T}^{0}:=\left\{ v\in\tilde{X}_{T}:v(0)=u_{0}\text{ and }\dot{v}(0)=u_{1}\right\} $
be given and consider the linear auxiliary systems 
\begin{equation}
\begin{cases}
\begin{array}{rcll}
\ddot{\xi}-\Div(\Sigma(\xi)) & = & 0 & \text{in }(0,T)\times\OS,\\
\xi & = & \xi_{0}+\int_{0}^{t}\hat{u}(s)\,\mathrm{d}s & \text{on }(0,T)\times\Gamma,\\
\xi(0) & = & \xi_{0} & \text{in }\OS,\\
\dot{\xi}(0) & = & \xi_{1} & \text{in }\OS,
\end{array}\end{cases}\label{eq:xihelp}
\end{equation}
and 

\begin{equation}
\begin{cases}
\begin{array}{rcll}
\dot{u}-\Div(\sigma(u,p)) & = & f & \text{in }(0,T)\times\OF,\\
\Div(u) & = & 0 & \text{in }(0,T)\times\OF,\\
\sigma(u,p)n & = & \Sigma(\xi)n & \text{on }(0,T)\times\Gamma,\\
u & = & 0 & \text{on }(0,T)\times\dO,\\
u(0) & = & u_{0} & \text{in }\OF.
\end{array}\end{cases}\label{eq:uhelp}
\end{equation}
as well as their derivatives\\
\begin{equation}
\begin{cases}
\begin{array}{rcll}
\ddot{\Xi}-\Div(\Sigma(\Xi)) & = & 0 & \text{in }(0,T)\times\OS,\\
\Xi & = & \hat{u} & \text{on }(0,T)\times\Gamma,\\
\Xi(0) & = & \xi_{1} & \text{in }\OS,\\
\dot{\Xi}(0) & = & \xi_{2} & \text{in }\OS,
\end{array}\end{cases}\label{eq:Xihelp}
\end{equation}
and

\begin{equation}
\begin{cases}
\begin{array}{rcll}
\dot{U}-\Div(\sigma(U,P)) & = & \dot{f} & \text{in }(0,T)\times\OF,\\
\Div(U) & = & 0 & \text{in }(0,T)\times\OF,\\
\sigma(U,P)n & = & \Sigma(\Xi)n & \text{on }(0,T)\times\Gamma,\\
U & = & 0 & \text{on }(0,T)\times\dO,\\
U(0) & = & u_{1} & \text{in }\OF.
\end{array}\end{cases}\label{eq:Uhelp}
\end{equation}
By Theorem \ref{thm:lame_LLT} and Lemma \ref{lem:auxiliaryLinear},
there is a unique solution
\[
\Xi\in\hat{Z}_{T}:=\C^{0}(\H1^{3/2+1/16}(\OS))\cap\C^{1}(\H1^{1/2+1/16}(\OS))\cap\C^{2}(\H1^{-1/2+1/16}(\OS))
\]
of \eqref{eq:Xihelp} such that 
\begin{align}
 & \Vert\Xi\Vert_{\hat{Z}_{T}}+\Vert\Sigma(\Xi)n\Vert_{\mathrm{M}^{1/2+1/16}(\dOS)}\nonumber \\
 & \leq C\left(\Vert\xi_{1}\Vert_{\H1^{3/2+1/16}(\OS)}+\Vert\xi_{1}\Vert_{\H1^{1/2+1/16}(\OS)}+T^{\alpha}\left(\Vert u_{0}\Vert_{\H1^{1/2+1/16}(\OF)}+\Vert u_{1}\Vert_{\H1^{3/4}(\OF)}+\Vert\hat{u}\Vert_{\tilde{X}_{T}}\right)\right).\label{eq:Xiest}
\end{align}
By construction,
\[
\xi:=\xi_{0}+\int_{0}^{t}\Xi(s)\,\mathrm{d}s\in\tilde{Z}_{T}
\]
is then a unique solution of \eqref{eq:xihelp}, where we have used
Theorem \ref{thm:elliptic_lame} and Lemma \ref{lem:auxiliaryLinear}
to prove the additional regularity
\begin{align*}
\Vert\xi\Vert_{\Lp^{2}(\H1^{5/2+1/16}(\OS))} & \leq C\left(\Vert\ddot{\xi}\Vert_{\Lp^{2}(\H1^{1/2+1/16}(\OS))}+\left\Vert \xi_{0}+\int_{0}^{t}\hat{u}(s)\,\mathrm{d}s\right\Vert _{\Lp^{2}(\H1^{2+1/16}(\Gamma))}\right)\\
 & \leq CT^{\alpha}\Big(\Vert\xi_{0}\Vert_{\H1^{5/2+1/16}(\OS)}+\Vert\xi_{1}\Vert_{\H1^{3/2+1/16}(\OS)}+\Vert\xi_{2}\Vert_{\H1^{1/2+1/16}(\OS)}\\
 & \qquad+\Vert u_{0}\Vert_{\H1^{2+1/16}(\OF)}+\Vert u_{1}\Vert_{\H1^{3/4}(\OF)}+\Vert\hat{u}\Vert_{\tilde{X}_{T}}\Big).
\end{align*}
Theorem \ref{thm:Stokes_BP} provides a unique solution $(U,P)$ of
system \eqref{eq:Uhelp}. Additonally using \eqref{eq:Xiest}, we
obtain the estimate
\begin{align}
 & \Vert U\Vert_{\Lp^{2}(\H1^{2}(\OF))\cap\H1^{1}(\Lp^{2}(\OF))}+\Vert P\Vert_{\Lp^{2}(\H1^{1}(\OF))}\nonumber \\
 & \leq C\Big(\Vert u_{1}\Vert_{\H1^{1}(\OF)}+\Vert\dot{f}\Vert_{\Lp^{2}(\Lp^{2}(\OF))}+\Vert\xi_{1}\Vert_{\H1^{3/2+1/16}(\OS)}+\Vert\xi_{2}\Vert_{\H1^{1/2+1/16}(\OS)}\label{eq:UPest}\\
 & \qquad+T^{\alpha}\left(\Vert u_{0}\Vert_{\H1^{2+1/16}(\OF)}+\Vert u_{1}\Vert_{\H1^{3/4}(\OF)}+\Vert\hat{u}\Vert_{\tilde{X}_{T}}\right)\Big).\nonumber 
\end{align}
Again by construction, the time integrals 
\[
u:=u_{0}+\int_{0}^{t}U(s)\,\mathrm{d}s,\qquad p:=p_{0}+\int_{0}^{t}P(s)\,\mathrm{d}s
\]
solve \eqref{eq:uhelp}. By applying Theorem \ref{thm:Stokes_BP}
to \eqref{eq:uhelp} with $s=1/2+1/16$, applying Theorem \ref{thm:lame_LLT}
to \eqref{eq:xihelp} with $\theta=1+1/16$, and using Lemma \ref{lem:auxiliaryLinear}
and \eqref{eq:UPest}, we obtain 
\begin{align}
 & \Vert u\Vert_{\Lp^{2}(\H1^{5/2+1/16}(\OF))}+\Vert p\Vert_{\Lp^{2}(\H1^{3/2+1/16}(\OF))}\nonumber \\
 & \leq C\Big((1+T^{1/2})\Vert\xi_{0}\Vert_{\H1^{5/2+1/16}(\OS)}+\Vert\xi_{1}\Vert_{\H1^{3/2+1/16}(\OS)}+\Vert\xi_{2}\Vert_{\H1^{1/2+1/16}(\OS)}\label{eq:ucontraction}\\
 & \qquad+\Vert u_{1}\Vert_{\H1^{1}(\OF)}+\Vert f\Vert_{\Lp^{2}(\H1^{1/2+1/16}(\OF))\cap\H1^{1}(\Lp^{2}(\OF))}\nonumber \\
 & \qquad+T^{\alpha}\left(\Vert u_{0}\Vert_{\H1^{2+1/16}(\OF)}+\Vert u_{1}\Vert_{\H1^{3/4}(\OF)}+\Vert\hat{u}\Vert_{\tilde{X}_{T}}\right)\Big).\nonumber 
\end{align}
Consequently, for given $\hat{u}\in\tilde{X}_{T}^{0}$, there are
unique solutions $\xi\in\tilde{Z}_{T}$ and $(u,p)\in\tilde{X}_{T}^{0}\times\tilde{Y}_{T}$
to systems $\eqref{eq:xihelp}$ and \eqref{eq:uhelp}. The affine
map $\tilde{X}_{T}^{0}\ni\hat{u}\mapsto u\in\tilde{X}_{T}^{0}$ is
well-defined and continuous and \eqref{eq:ucontraction} shows that
the operator norm satisfies $\leq CT^{\alpha}$ for $\alpha\geq0$,
so that we obtain a unique fixed point $u\in\tilde{X}_{T_{0}}^{0}$
and corresponding $p\in\tilde{Y}_{T_{0}},\xi\in\tilde{Z}_{T_{0}}$
if $T_{0}$ is chosen sufficiently small. As the size of $T_{0}$
does not depend on the initial data and the compatibility conditions
\eqref{eq:compatibility_thm} are preserved, we can extend these solutions
to any $T>0$ and have proved Theorem \ref{lem:lin_regulaer}. 

\textbf{Step 2: A-priori estimates and approximation of corresponding
solutions.}\\
 The next step in the proof of Theorem \ref{thm:lokale_existenz}
is to reduce the regularity in Theorem~\ref{lem:lin_regulaer} to
norms that fit to the global a-priori estimates associated to system
\eqref{eq:nonlin_system}. To establich global estimates, we define
the total of kinetic and elastic energies for system (\ref{eq:nonlin_system})
by 
\[
E(t):=\frac{1}{2}\Vert u(t)\Vert_{\Lp^{2}(\OF)}^{2}+\frac{1}{2}\Vert\dot{\xi}(t)\Vert_{\Lp^{2}(\OS)}^{2}+\frac{1}{2}\int_{\OS}\Sigma(\xi):\varepsilon(\xi)(t)\,\dx y,
\]
and a corresponding higher-order quantity by 
\[
K(t):=\frac{1}{2}\Vert\dot{u}(t)\Vert_{\Lp^{2}(\OF)}^{2}+\frac{1}{2}\Vert\ddot{\xi}(t)\Vert_{\Lp^{2}(\OS)}^{2}+\frac{1}{2}\int_{\OS}\Sigma(\dot{\xi}):\varepsilon(\dot{\xi})(t)\,\dx y.
\]

\begin{thm}
\label{lem:lin} Let 
\begin{align}
\begin{split}\label{eq:AW}\left(u_{0},\,u_{1},\,p_{0},\,\xi_{0},\,\xi_{1},\,\xi_{2},\,f\right)\in & \,\H1^{2}(\OF)\times\H1^{1}(\OF)\times\H1^{1}(\OF)\times\H1^{2}(\OS)\times\H1^{1}(\OS)\times\Lp^{2}(\OS)\\
 & \times\left(\Lp^{2}(\H1^{1/2+1/16}(\OF))\cap\H1^{1}(\H1^{-1/2+1/16}(\OF))\right)
\end{split}
\end{align}
be given such that (\ref{eq:compatibility_thm}) is satisfied, with
$-(u_{0}\cdot\nabla)u_{0}$ replaced by $f(0)$. Then the linear system
(\ref{eq:lin_system}) admits a unique solution $(u,p,\xi)\in X_{T}\times Y_{T}\times Z_{T}$
that satisfies 
\begin{align}
E(t)+\int_{0}^{t}2\nu\Vert\varepsilon(u(s))\Vert_{\Lp^{2}(\OF)}^{2}\,\dx s & =E(0)+\int_{0}^{t}\int_{\OF}f\cdot u\,\dx y\dx s\label{EE}
\end{align}
and 
\begin{align}
K(t)+\int_{0}^{t}2\nu\Vert\varepsilon(\dot{u}(s))\Vert_{\Lp^{2}(\OF)}^{2}\,\dx s & =K(0)+\int_{0}^{t}\langle\dot{u}(s),\dot{f}(s)\rangle_{1/2-1/16}\,\dx s.\label{eq:Kgleichung}
\end{align}
\end{thm}

\begin{proof}
Given existence of a solution $(u,p,\xi)\in X_{T}\times Y_{T}\times Z_{T}$,
the energy equality (\ref{EE}) is obtained by testing with $u$ and
$\dot{\xi}$, respectively. The corresponding equality for $K$ in
\eqref{eq:Kgleichung} is obtained by testing the time derivative
of the system with $\dot{u},\ddot{\xi}$ , respectively. Note that
using Korn's second inequality and the relation
\[
\int_{\OS}\Sigma(\xi(t)):\varepsilon(\xi(t))\,\dx y=2\lambda_{1}\Vert\varepsilon(\xi(t))\Vert_{\Lp^{2}(\OS)}^{2}+\int_{\OS}\lambda_{2}\Div(\xi(t))^{2}\,\dx y,
\]
we have constants $c,C>0$ depending only on $\OS$ such that 

\[
c\Vert\varepsilon(\xi(t))\Vert_{\Lp^{2}(\OS)}^{2}\leq\int_{\OS}\Sigma(\xi(t)):\varepsilon(\xi(t))\,\dx y\leq C\Vert\varepsilon(\xi(t))\Vert_{\Lp^{2}(\OS)}^{2},
\]
and 
\[
c\Vert\dot{\xi}(t)\Vert_{\H1^{1}(\OS)}^{2}\leq\Vert\dot{\xi}(t)\Vert_{\Lp^{2}(\OS)}^{2}+\int_{\OS}\Sigma(\dot{\xi}(t)):\varepsilon(\dot{\xi}(t))\,\dx y\leq C\Vert\dot{\xi}(t)\Vert_{\H1^{1}(\OS)}^{2}.
\]
Combining (\ref{EE}) and \eqref{eq:Kgleichung} gives the a-priori
estimate
\begin{align}
\begin{split} & \Vert u\Vert_{\H1^{1}(\H1^{1}(\OF))\cap\C^{1}(\Lp^{2}(\OF))}^{2}+\Vert\varepsilon(\xi)\Vert_{\C^{0}(\Lp^{2}(\OS))}^{2}+\Vert\dot{\xi}\Vert_{\C^{0}(\H1^{1}(\OS))\cap\C^{1}(\Lp^{2}(\OF))}^{2}\\
\leq & \,C\Big(\Vert u_{0}\Vert_{\Lp^{2}(\OF)}^{2}+\Vert u_{1}\Vert_{\Lp^{2}(\OF)}^{2}+\Vert\varepsilon(\xi_{0})\Vert_{\Lp^{2}(\OS)}^{2}+\Vert\xi_{1}\Vert_{\H1^{1}(\OS)}^{2}+\Vert\xi_{2}\Vert_{\Lp^{2}(\OS)}^{2}\\
 & \hspace{0.5cm}+\Vert f\Vert_{\Lp^{2}(\Lp^{2}(\OF))\cap\H1^{1}(\H1^{-1/2+1/16}(\OF))}^{2}\Big).
\end{split}
\label{eq:est_H1H1_step3}
\end{align}
The existence part of Theorem \ref{lem:lin} can be proved by using
this estimate and approximation with more regular data. We define
\begin{align*}
A:=\H1^{2}(\OF)\times\H1^{1}(\OF)\times\H1^{1}(\OF)\times\H1^{2}(\OS)\times\H1^{1}(\OS)\times\Lp^{2}(\OS)
\end{align*}
and 
\begin{align*}
\tilde{A}:= & \H1^{5/2+1/16}(\OF)\times\H1^{1}(\OF)\times\H1^{3/2+1/16}(\OF)\\
 & \times\H1^{5/2+1/16}(\OS)\times\H1^{3/2+1/16}(\OS)\times\H1^{1/2+1/16}(\OS).
\end{align*}
Given are
\[
d:=(u_{0},\,u_{1},\,p_{0},\,\xi_{0},\,\xi_{1},\,\xi_{2},\,f)\in A\times\left(\Lp^{2}(\H1^{1/2+1/16}(\OF))\cap\H1^{1}(\H1^{-1/2+1/16}(\OF))\right)
\]
that satisfy (\ref{eq:compatibility_thm}) with $-(u_{0}\cdot\nabla)u_{0}$
replaced by $f(0)$.Through the following six small steps, we construct
a sequence 
\begin{align*}
d_{n}:=(u_{0}^{n},u_{1}^{n},p_{0}^{n},\xi_{0}^{n},\xi_{1}^{n},\xi_{2}^{n},f^{n})\in\tilde{A}\times\left(\C^{\infty}(\H1^{1/2+1/16}(\OF))\right)
\end{align*}
that satisfies (\ref{eq:compatibility_thm}) and (\ref{eq:compatibility_step1})
for all $n\in\mathbb{N}$ such that 
\begin{equation}
d_{n}\to d\text{ in the norms of }A\times\left(\Lp^{2}(\H1^{1/2+1/16}(\OF))\cap\H1^{1}(\H1^{-1/2+1/16}(\OF))\right):\label{eq:approxinitialData}
\end{equation}
 1.) Set $u_{1}^{n}:=u_{1}$ for all $n\in\mathbb{N}$. \\
 2.) Choose a sequence $(\hat{\xi}_{2}^{n})\subset\C_{0}^{\infty}(\OS)$
such that $\lim_{n\to\infty}\hat{\xi}_{2}^{n}=\xi_{2}$ in $\Lp^{2}(\OS)$.
To modify this sequence such that it satisfies the compatibility condition
on $\dOS$, we define 
\begin{align*}
(\dOS)^{n}:=\left\{ y\in\OS:\text{dist}(y,\dOS)<\frac{1}{2^{n}}\right\} 
\end{align*}
for $n\in\mathbb{N}$ and choose a sequence $(\varphi^{n})\subset\C^{\infty}(\OS)$
such that 
\begin{align*}
\varphi^{n}(y)=\begin{cases}
1 & \text{if }y\in(\dOS)^{n+1},\\
0 & \text{if }y\in\Omega_{S}\setminus(\dOS)^{n}.
\end{cases}
\end{align*}
Now let $u_{1}^{E}\in\H1^{1}(\Omega)$ denote an extension of $u_{1}$
to $\Omega$ and set 
\begin{align*}
\xi_{2}^{n}:=\hat{\xi}_{2}^{n}+\varphi^{n}u_{1}^{E}\in\H1^{1}(\OS).
\end{align*}
Then $\xi_{2}^{n}\vert_{\dOS}=u_{1}\vert_{\dOS}$ and 
\begin{align*}
\Vert\varphi^{n}u_{1}^{E}\Vert_{\Lp^{2}(\OS)}\leq C\Vert\varphi^{n}\Vert_{\Lp^{3}(\OS)}\Vert u_{1}^{E}\Vert_{\Lp^{6}(\OS)}\leq C\vert(\dOS)^{n}\vert^{1/3}\Vert u_{1}^{E}\Vert_{\H1^{1}(\OS)}\to0,
\end{align*}
so $\xi_{2}^{n}\to\xi_{2}$ in $\Lp^{2}(\OS)$. \\
 3.) Choose a sequence $(g^{n})\subset\H1^{2+1/16}(\dOS)$ such that
$\lim_{n\to\infty}g^{n}=\xi_{0}\vert_{\dOS}$ in $\H1^{3/2}(\dOS)$.
Because of $\Div(\Sigma(\xi_{0}))=\xi_{2},$ a sequence $(\xi_{0}^{n})\subset\H1^{5/2+1/16}(\OS)$
that satisfies $\lim_{n\to\infty}\xi_{0}^{n}=\xi_{0}$ in $\H1^{2}(\OS)$
is then given by the solutions of the Dirichlet problem 
\[
\begin{cases}
\begin{array}{rcll}
\Div(\Sigma(\xi_{0}^{n})) & = & \xi_{2}^{n} & \text{in }\OS,\\
\xi_{0}^{n} & = & g^{n} & \text{on }\dOS,
\end{array}\end{cases}
\]
and using Theorem~\ref{thm:elliptic_lame} for both $s=3/2+1/16$
and $s=1$. \\
 4.) Since $\H1^{1/2+1/16}(\OF)\hookrightarrow\H1^{-1/2+1/16}(\OF)$
is dense, \cite[Theorem 2.1]{LionsMagenes} implies that we find a
sequence $(f^{n})\subset\C^{\infty}(\H1^{1/2+1/16}(\OF))$ such that
$\lim_{n\to\infty}f^{n}=f$ in $\Lp^{2}(\H1^{1/2+1/16}(\OF))\cap\H1^{1}(\H1^{-1/2+1/16}(\OF))$.
Moreover, the embedding 
\[
\left(\H1^{1/2+1/16}(\OF),\H1^{-1/2+1/16}(\OF)\right)_{1/2}\hookrightarrow\Lp^{2}(\OF)
\]
and \cite[Theorem 3.1]{LionsMagenes} imply that 
\begin{align*}
\Vert f-f^{n}\Vert_{\C^{0}(\Lp^{2}(\OF))}\leq C\Vert f-f^{n}\Vert_{\Lp^{2}(\H1^{1/2+1/16}(\OF))\cap\H1^{1}(\H1^{-1/2+1/16}(\OF)}\to0,
\end{align*}
and, in particular, $f^{n}(0)\to f(0)$ in $\Lp^{2}(\OF)$. \\
 5.) For all $n\in\mathbb{N}$, consider the Stokes problem 
\[
\begin{cases}
\begin{array}{rcll}
\Div(\sigma(u_{0}^{n},p_{0}^{n})) & = & u_{1}^{n}-f^{n}(0) & \text{in }\OF,\\
\Div(u_{0}^{n}) & = & 0 & \text{in }\OF,\\
\sigma(u_{0}^{n},p_{0}^{n})n & = & \Sigma(\xi_{0}^{n})n & \text{on }\dOS,\\
u_{0}^{n} & = & 0 & \text{on }\dO.
\end{array}\end{cases}
\]
Because of $u_{1}^{n}\in\H1^{1}(\OF)$ and $\Sigma(\xi_{0}^{n})n\in\H1^{1+1/16}(\partial\Omega_{S})$,
there is a sequence of solutions $(u_{0}^{n},p_{0}^{n})\subset\H1^{5/2+1/16}(\OF)\times\H1^{3/2+1/16}(\OF)$
by using Theorem~\ref{thm:elliptic_stokes} for $s=1/2+1/16$. Due
to
\begin{align*}
 & \lim_{n\to\infty}u_{1}^{n}=u_{1}\text{ in }\Lp^{2}(\OF),\hspace{0.2cm}\lim_{n\to\infty}\Sigma(\xi_{0}^{n})n=\Sigma(\xi_{0})n\text{ in }\H1^{1/2}(\partial\Omega_{S})\\
 & \text{and }\lim_{n\to\infty}f^{n}(0)=f(0)\text{ in }\Lp^{2}(\OF),
\end{align*}
Theorem~\ref{thm:elliptic_stokes} for $s=0$ implies $\lim_{n\to\infty}u_{0}^{n}=u_{0}$
in $\H1^{2}(\OF)$ and $\lim_{n\to\infty}p_{0}^{n}=p_{0}$ in $\H1^{1}(\OF)$.
\\
 6.) Finally, set $h:=\Div(\Sigma(\xi_{1}))\in\H1^{-1}(\OS))$ and
consider the elliptic problem 
\[
\begin{cases}
\begin{array}{rcll}
\Div(\Sigma(\xi_{1})) & = & h & \text{in }\H1^{-1}(\OS),\\
\xi_{1} & = & u_{0} & \text{on }\dOS.
\end{array}\end{cases}
\]
Now choose some sequence $(h^{n})\subset\Lp^{2}(\OS)$ such that $\lim_{n\to\infty}h^{n}=h$
in $\H1^{-1}(\OS)$ and consider the elliptic problems 
\[
\begin{cases}
\begin{array}{rcll}
\Div(\Sigma(\xi_{1}^{n})) & = & h^{n} & \text{in }\OS,\\
\xi_{1}^{n} & = & u_{0}^{n} & \text{on }\dOS.
\end{array}\end{cases}
\]
Since Step 5 implies that $(u_{0}^{n}\vert_{\dOS})\subset\H1^{2+1/16}(\dOS)$
and $\lim_{n\to\infty}u_{0}^{n}\vert_{\dOS}=u_{0}\vert_{\dOS}$ in
$\H1^{3/2}(\dOS)$, we can use Theorem~\ref{thm:elliptic_lame} for
both $s=1$ and $s=0$ and obtain a sequence of solutions $(\xi_{1}^{n})\subset\H1^{2}(\OS)$
such that $\lim_{n\to\infty}\xi_{1}^{n}=\xi_{1}$ in $\H1^{1}(\OS)$. 

Through Steps 1.) - 6.), we have constructed compatible data 
\[
d_{n}=(u_{0}^{n},\,u_{1}^{n},\,p_{0}^{n},\,\xi_{0}^{n},\,\xi_{1}^{n},\,\xi_{2}^{n},\,f^{n})\in\tilde{A}\times\left(\C^{\infty}(\H1^{1/2+1/16}(\OF))\right)
\]
that satisfy \eqref{eq:approxinitialData}. By Theorem \ref{lem:lin_regulaer},
we find solutions $(u^{n},p^{n},\xi^{n})\in\tilde{X}_{T}\times\tilde{Y}_{T}\times\tilde{Z}_{T}$
to the linear system (\ref{eq:lin_system}) corresponding to $d_{n}$.
Estimate (\ref{eq:est_H1H1_step3}) for the difference $(u^{n}-u^{m},p^{n}-p^{m},\xi^{n}-\xi^{m})$
of any such solutions shows that $(u^{n},\varepsilon(\xi^{n}),\dot{\xi}^{n})$
is a Cauchy sequence in 
\begin{align*}
\left(\H1^{1}(\H1^{1}(\OF))\cap\C^{1}(\Lp^{2}(\OF))\right)\times\C^{0}(\Lp^{2}(\OS))\times\left(\C^{0}(\H1^{1}(\OS))\cap\C^{1}(\Lp^{2}(\OS))\right).
\end{align*}
Moreover, by elliptic regularity of the Stokes problem, Theorem~\ref{thm:elliptic_stokes},
and Theorem~\ref{thm:elliptic_lame} with $s=1$, 
\begin{align*}
 & \Vert u^{n}-u^{m}\Vert_{\Lp^{2}(\H1^{2}(\OF))}+\Vert p^{n}-p^{m}\Vert_{\Lp^{2}(\H1^{1}(\OF))}+\Vert\xi^{n}-\xi^{m}\Vert_{\C^{0}(\H1^{2}(\OS))}\\
\leq & \,C\Big(T^{1/2}\Vert\dot{u}^{n}-\dot{u}^{m}\Vert_{\C^{0}(\Lp^{2}(\OF))}+\Vert f^{n}-f^{m}\Vert_{\Lp^{2}(\Lp^{2}(\OF))}+(1+T^{1/2})\Vert\ddot{\xi}_{n}-\ddot{\xi}_{m}\Vert_{\C^{0}(\Lp^{2}(\OS))}\\
 & \hspace{0.3cm}+(1+T^{1/2})\Vert\xi_{0}^{n}-\xi_{0}^{m}\Vert_{\H1^{2}(\OS)}+(T^{1/2}+T)\Vert u^{n}-u^{m}\Vert_{\Lp^{2}(\H1^{2}(\OF))}\Big).
\end{align*}
Consequently, for $T>0$ such that $C(T^{1/2}+T)<1$, the last term
on the right-hand side can be absorbed, so that $(u^{n},p^{n},\xi^{n})$
is also a Cauchy sequence in 
\begin{align*}
\Lp^{2}(\H1^{2}(\OF))\times\Lp^{2}(\H1^{1}(\OF))\times\C^{0}(\H1^{2}(\OS))
\end{align*}
and therefore in $X_{T}\times Y_{T}\times Z_{T}$. By construction,
the limit $(u,p,\xi)\in X_{T}\times Y_{T}\times Z_{T}$ is a strong
solution to the linear system (\ref{eq:lin_system}) with initial
data $(u_{0},u_{1},p_{0},\xi_{0},\xi_{1},\xi_{2},f)$. Uniqueness
follows from a Gronwall argument. This concludes the proof of Theorem
\ref{lem:lin}.

\textbf{Step 3: Non-linear problem.} \\
 The existence and uniqueness of solutions to (\ref{eq:nonlin_system})
is proved by using a fixed point argument. The starting point are
estimates on the non-linear term $(u\cdot\nabla)u$. They are given
in some detail, as the choice of norms is special. 
\end{proof}
\begin{lem}
\label{lem:utilde_statt_f-1} Let $u,v,w\in X_{T}$. 
\begin{itemize}
\item[a)] Then 
\begin{align*}
(u\cdot\nabla)u\in\Lp^{2}(\H1^{1/2+1/16}(\OF))\cap\H1^{1}(\H1^{-1/2+1/16}(\OF)).
\end{align*}
\item[b)] There exist some $C,\,\alpha>0$ such that 
\begin{align*}
 & \Vert(u\cdot\nabla)v\Vert_{\Lp^{2}(\Lp^{2}(\OF))}\\
\leq & \,CT^{\alpha}\min\Big\{\left(\Vert u\Vert_{\H1^{1}(\H1^{1}(\OF))}+\Vert u(0)\Vert_{\H1^{1}(\OF)}\right)\left(\Vert v\Vert_{\H1^{1}(\H1^{1}(\OF))}+\Vert v(0)\Vert_{\H1^{1}(\OF)}\right)^{1/3}\Vert v\Vert_{X_{T}}^{2/3},\\
\\
 & \Vert u\Vert_{X_{T}}^{2/3}\left(\Vert u\Vert_{\H1^{1}(\H1^{1}(\OF))}+\Vert u(0)\Vert_{\H1^{1}(\OF)}\right)^{1/3}\left(\Vert v\Vert_{\H1^{1}(\H1^{1}(\OF))}+\Vert v(0)\Vert_{\H1^{1}(\OF)}\right)\Big\}.\\
\end{align*}
\item[c)]  There exist some $C,\,\alpha>0$ such that 
\begin{align*}
 & \int_{0}^{t}\int_{\OF}\vert(u\cdot\nabla)v\cdot w\vert+\vert(\dot{u}\cdot\nabla)v\cdot\dot{w}\vert+\vert(u\cdot\nabla)\dot{v}\cdot\dot{w}\vert\,\dx y\dx s\\
\leq & \,CT^{\alpha}\Big(\Vert u\Vert_{\H1^{1}(\H1^{1}(\OF))\cap\C^{1}(\Lp^{2}(\OF))}\Vert v\Vert_{\H1^{1}(\H1^{1}(\OF))}+\Vert u(0)\Vert_{\H1^{1}(\OF)}\Vert v\Vert_{\H1^{1}(\H1^{1}(\OF))}\\
 & \hspace{1.5cm}+\Vert v(0)\Vert_{\H1^{1}(\OF)}\Vert u\Vert_{\H1^{1}(\H1^{1}(\OF))\cap\C^{1}(\Lp^{2}(\OF))}\Big)\Vert w\Vert_{\H1^{1}(\H1^{1}(\OF))\cap\C^{1}(\Lp^{2}(\OF))}.
\end{align*}
\end{itemize}
\end{lem}

\begin{proof}
a) To show $(u\cdot\nabla)u\in\Lp^{2}(\H1^{1/2+1/16}(\OF))$, we use
interpolation, Hölder's inequality and (fractional) Sobolev embeddings
to get
\begin{align*}
 & \Vert(u\cdot\nabla)u\Vert_{\H1^{1/2+1/16}(\OF)}\\
\leq\, & C\Vert(u\cdot\nabla)u\Vert_{\H1^{1}(\OF)}^{1/2+1/16}\Vert(u\cdot\nabla)u\Vert_{\Lp^{2}(\OF)}^{1/2-1/16}\\
\leq\, & C\big(\Vert(u\cdot\nabla)u\Vert_{\Lp^{2}(\OF)}+\Vert\vert\nabla u\vert\vert\nabla u\vert\Vert_{\Lp^{2}(\OF)}^{1/2+1/16}\Vert(u\cdot\nabla)u\Vert_{\Lp^{2}(\OF)}^{1/2-1/16}\\
 & \hspace{0.6cm}+\Vert\vert u\vert\vert\nabla^{2}u\vert\Vert_{\Lp^{2}(\OF)}^{1/2+1/16}\Vert(u\cdot\nabla)u\Vert_{\Lp^{2}(\OF)}^{1/2-1/16}\big)\\
\leq & C\big(\Vert u\Vert_{\H1^{1}(\OF)}\Vert u\Vert_{\H1^{3/2}(\OF)}+\Vert\nabla u\Vert_{\Lp^{3}(\OF)}^{1/2+1/16}\Vert\nabla u\Vert_{\Lp^{6}(\OF)}^{1/2+1/16}\Vert u\Vert_{\Lp^{6}(\OF)}^{1/2-1/16}\Vert\nabla u\Vert_{\Lp^{3}(\OF)}^{1/2-1/16}\\
 & \hspace{0.6cm}+\Vert u\Vert_{\Lp^{\infty}(\OF)}^{1/2+1/16}\Vert\nabla^{2}u\Vert_{\Lp^{2}(\OF)}^{1/2+1/16}\Vert u\Vert_{\Lp^{6}(\OF)}^{1/2-1/16}\Vert\nabla u\Vert_{\Lp^{3}(\OF)}^{1/2-1/16}\big)\\
\leq & C\Vert u\Vert_{\H1^{13/8}(\OF)}\Vert u\Vert_{\H1^{2}(\OF)}^{1/2+1/16}\Vert u\Vert_{\H1^{1}(\OF)}^{1/2-1/16}.
\end{align*}
Hölder's inequality on $(0,T)$ and more embeddings combined with
Lemma \ref{lem:est_interpolation_BGT} provide
\begin{align*}
 & \Vert(u\cdot\nabla)u\Vert_{\Lp^{2}(\H1^{1/2+1/16}(\OF))}\\
\leq\, & C\left\Vert \Vert u\Vert_{\H1^{13/8}(\OF)}\right\Vert _{\Lp^{8}(0,T)}\left\Vert \Vert u\Vert_{\H1^{2}(\OF)}\right\Vert _{\Lp^{2}(0,T)}^{9/16}\left\Vert \Vert u\Vert_{\H1^{1}(\OF)}\right\Vert _{\Lp^{14/3}(0,T)}^{7/16}\\
\leq\, & C(T)\Vert u\Vert_{\H1^{3/8}(\H1^{13/8}(\OF))}\Vert u\Vert_{\Lp^{2}(\H1^{2}(\OF))}^{9/16}\Vert u\Vert_{\H1^{4/14}(\H1^{1}(\OF))}^{7/16}\leq C(T)\Vert u\Vert_{X_{T}}^{2}.
\end{align*}
In order to show $(u\cdot\nabla)u\in\H1^{1}(\H1^{-1/2+1/16}(\OF))$,
note that
\begin{align*}
\H1^{-1/2+1/16}(\OF)=(\H1^{1/2-1/16}(\OF))^{*}
\end{align*}
(see \cite[Theorem 4.8.2]{Triebel}). Now for $v\in\H1^{1/2-1/16}(\OF)$,
Hölder's inequality and embeddings give the estimate
\begin{align*}
 & \int_{\OF}\left((\dot{u}\cdot\nabla)u+(u\cdot\nabla)\dot{u}\right)\cdot v\,\dx y\\
\leq\, & C\left(\Vert\dot{u}\Vert_{\Lp^{4}(\OF)}\Vert\nabla u\Vert_{\Lp^{8/3}(\OF)}+\Vert u\Vert_{\Lp^{8}(\OF)}\Vert\nabla\dot{u}\Vert_{\Lp^{2}(\OF)}\right)\Vert v\Vert_{\Lp^{8/3}(\OF)}\\
\leq\, & C\left(\Vert\dot{u}\Vert_{\H1^{3/4}(\OF)}\Vert u\Vert_{\H1^{11/8}(\OF)}+\Vert u\Vert_{\H1^{9/8}(\OF)}\Vert\dot{u}\Vert_{\H1^{1}(\OF)}\right)\Vert v\Vert_{\H1^{1/2-1/16}(\OF)}.
\end{align*}
Again, Hölder's inequality on $(0,T)$ and further embeddings combined
with Lemma \ref{lem:est_interpolation_BGT} lead to
\begin{align*}
 & \left\Vert \int_{\OF}\left((\dot{u}\cdot\nabla)u+(u\cdot\nabla)\dot{u}\right)\cdot v\,\dx y\right\Vert _{\Lp^{2}(0,T)}\\
 & \leq\,C\Big(\left\Vert \Vert\dot{u}\Vert_{\H1^{3/4}(\OF)}\right\Vert _{\Lp^{4}(0,T)}\left\Vert \Vert u\Vert_{\H1^{11/8}(\OF)}\right\Vert _{\Lp^{4}(0,T)}\\
 & \hspace{0.3cm}+\left\Vert \Vert u\Vert_{\H1^{9/8}(\OF)}\right\Vert _{\Lp^{\infty}(0,T)}\left\Vert \Vert\dot{u}\Vert_{\H1^{1}(\OF)}\right\Vert _{\Lp^{2}(0,T)}\Big)\Vert v\Vert_{\H1^{1/2-1/16}(\OF)}\\
 & \leq\,C(T)\Vert u\Vert_{X_{T}}^{2}\Vert v\Vert_{\H1^{1/2-1/16}(\OF)}.
\end{align*}
Similarly, we can estimate 
\begin{align*}
\left\Vert \int_{\OF}(u\cdot\nabla)u\cdot v\,\dx y\right\Vert _{\Lp^{2}(0,T)}\leq C(T)\Vert u\Vert_{X_{T}}^{2}\Vert v\Vert_{\H1^{1/2-1/16}(\OF)},
\end{align*}
so we conclude that $(u\cdot\nabla)u\in\H1^{1}(\H1^{-1/2+1/16}(\OF))$.
\\
 b) By embedding and interpolation, 
\[
\Vert(u\cdot\nabla)v\Vert_{\Lp^{2}(\OF)}\leq C\Vert u\Vert_{\H1^{5/3}(\OF)}\Vert v\Vert_{\H1^{1}(\OF)}\leq C\Vert u\Vert_{\H1^{2}(\OF)}^{2/3}\Vert u\Vert_{\H1^{1}(\OF)}^{1/3}\Vert v\Vert_{\H1^{1}(\OF)},
\]
so Hölder's inequality on $(0,T)$ together with (\ref{eq:sobolev_inequality_T-1})
for $q=12$, $s=5/12$, $\sigma=1$ implies
\begin{align*}
 & \Vert(u\cdot\nabla)v\Vert_{\Lp^{2}(\Lp^{2}(\OF))}\\
\leq\, & C\left\Vert \Vert u\Vert_{\H1^{2}(\OF)}^{2/3}\right\Vert _{\Lp^{3}(0,T)}\left\Vert \Vert u\Vert_{\H1^{1}(\OF)}^{1/3}\Vert\right\Vert _{\Lp^{12}(0,T)}\left\Vert \Vert v\Vert_{\H1^{1}(\OF)}\right\Vert _{\Lp^{12}(0,T)}\\
\leq\, & C\left\Vert \Vert u\Vert_{\H1^{2}(\OF)}\right\Vert _{\Lp^{2}(0,T)}^{2/3}\left\Vert \Vert u\Vert_{\H1^{1}(\OF)}\Vert\right\Vert _{\Lp^{4}(0,T)}^{1/3}\left\Vert \Vert v\Vert_{\H1^{1}(\OF)}\right\Vert _{\Lp^{12}(0,T)}\\
\leq\, & CT^{\alpha}\Vert u\Vert_{X_{T}}^{2/3}\left(\Vert u\Vert_{\H1^{1}(\H1^{1}((\OF))}+\Vert u(0)\Vert_{\H1^{1}(\OF)}\Vert\right)^{1/3}\left(\Vert v\Vert_{\H1^{1}(\H1^{1}(\OF))}+\Vert v(0)\Vert_{\H1^{1}(\OF)}\right).\\
\end{align*}
Similarly, due to

\[
\Vert(u\cdot\nabla)v\Vert_{\Lp^{2}(\OF)}\leq C\Vert u\Vert_{\Lp^{6}(\OF)}\Vert\nabla v\Vert_{\Lp^{3}(\OF)}\leq C\Vert u\Vert_{\H1^{1}(\OF)}\Vert v\Vert_{\H1^{1}(\OF)}^{1/3}\Vert v\Vert_{\H1^{2}(\OF)}^{2/3},
\]
we obtain
\begin{align*}
 & \Vert(u\cdot\nabla)v\Vert_{\Lp^{2}(\Lp^{2}(\OF))}\\
\leq\, & CT^{\alpha}\left(\Vert u\Vert_{\H1^{1}(\H1^{1}((\OF))}+\Vert u(0)\Vert_{\H1^{1}(\OF)}\right)\left(\Vert v\Vert_{\H1^{1}(\H1^{1}(\OF))}+\Vert v(0)\Vert_{\H1^{1}(\OF)}\right)^{1/3}\Vert v\Vert_{X_{T}}^{2/3}.\\
\end{align*}
 c) For the second term, by Hölder's inequality, embeddings and interpolation,
\begin{align*}
\int_{\OF}\vert(\dot{u}\cdot\nabla)v\cdot\dot{w}\vert\,\dx y & \leq C\Vert\dot{u}\Vert_{\Lp^{3}(\OF)}\Vert\nabla v\Vert_{\Lp^{2}(\OF)}\Vert\dot{w}\Vert_{\H1^{1}(\OF)}\\
 & \leq C\Vert\dot{u}\Vert_{\H1^{1}(\OF)}^{1/2}\Vert\dot{u}\Vert_{\Lp^{2}(\OF)}^{1/2}\Vert v\Vert_{\H1^{1}(\OF)}\Vert\dot{w}\Vert_{\H1^{1}(\OF)}.
\end{align*}
Applying again Hölder's inequality on $(0,T)$, embeddings, and (\ref{eq:sobolev_inequality_T-1})
for $q=8$, $s=3/8$, $\sigma=1$, we obtain
\begin{align*}
 & \int_{0}^{t}\int_{\OF}\vert(\dot{u}\cdot\nabla)v\cdot\dot{w}\vert\,\dx y\dx s\\
\leq\, & C\left\Vert \Vert\dot{u}\Vert_{\H1^{1}(\OF)}^{1/2}\right\Vert _{\Lp^{4}(0,T)}\left\Vert \Vert\dot{u}\Vert_{\Lp^{2}(\OF)}^{1/2}\right\Vert _{\Lp^{8}(0,T)}\left\Vert \Vert v\Vert_{\H1^{1}(\OF)}\right\Vert _{\Lp^{8}(0,T)}\left\Vert \Vert\dot{w}\Vert_{\H1^{1}(\OF)}\right\Vert _{\Lp^{2}(0,T)}\\
\leq\, & C\left\Vert \Vert\dot{u}\Vert_{\H1^{1}(\OF)}\right\Vert _{\Lp^{2}(0,T)}^{1/2}\left\Vert \Vert\dot{u}\Vert_{\Lp^{2}(\OF)}\right\Vert _{\Lp^{4}(0,T)}^{1/2}\left\Vert \Vert v\Vert_{\H1^{1}(\OF)}\right\Vert _{\Lp^{8}(0,T)}\left\Vert w\right\Vert _{\H1^{1}(\H1^{1}(\OF))}\\
\leq\, & CT^{\alpha}\Vert u\Vert_{\H1^{1}(\H1^{1}(\OF))}^{1/2}\Vert u\Vert_{\C^{1}(\Lp^{2}(\OF))}^{1/2}\left(\Vert v\Vert_{\H1^{1}(\H1^{1}(\OF))}+\Vert v(0)\Vert_{\H1^{1}(\OF)}\right)\left\Vert w\right\Vert _{\H1^{1}(\H1^{1}(\OF))}.
\end{align*}
The first term can be estimated similarly. For the third term, we
make use of the same tools to estimate 
\begin{align*}
 & \int_{0}^{t}\int_{\OF}\vert(u\cdot\nabla)\dot{v}\cdot\dot{w}\vert\,\dx y\dx s\\
\leq\, & C\int_{0}^{t}\Vert u\Vert_{\H1^{1}(\OF)}\Vert\nabla\dot{v}\Vert_{\Lp^{2}(\OF)}\Vert\dot{w}\Vert_{\H1^{1}(\OF)}^{1/2}\Vert\dot{w}\Vert_{\Lp^{2}(\OF)}^{1/2}\,\dx s\\
\leq\, & C\left\Vert \Vert u\Vert_{\H1^{1}(\OF)}\right\Vert _{\Lp^{8}(0,T)}\left\Vert \Vert\nabla\dot{v}\Vert_{\Lp^{2}(\OF)}\right\Vert _{\Lp^{2}(0,T)}\left\Vert \Vert\dot{w}\Vert_{\H1^{1}(\OF)}^{1/2}\right\Vert _{\Lp^{4}(0,T)}\left\Vert \Vert\dot{w}\Vert_{\Lp^{2}(\OF)}^{1/2}\right\Vert _{\Lp^{8}(0,T)}\\
\leq\, & CT^{\alpha}\left(\Vert u\Vert_{\H1^{1}(\H1^{1}(\OF))}+\Vert u(0)\Vert_{\H1^{1}(\OF)}\right)\Vert v\Vert_{\H1^{1}(\H1^{1}((\OF))}\Vert w\Vert_{\H1^{1}(\H1^{1}(\OF))}^{1/2}\Vert w\Vert_{\C^{1}(\Lp^{2}(\OF))}^{1/2}.\qedhere
\end{align*}
Now consider given data 
\begin{align*}
(u_{0},u_{1},p_{0},\xi_{0},\xi_{1},\xi_{2})\in\H1^{2}(\OF)\times\H1^{1}(\OF)\times\H1^{1}(\OF)\times\H1^{2}(\OS)\times\H1^{1}(\OS)\times\Lp^{2}(\OS)
\end{align*}
such that the compatibility conditions (\ref{eq:compatibility_thm})
are satisfied. For some 
\begin{align*}
M=M\left(\Vert u_{0}\Vert_{\H1^{1}(\OF)},\Vert u_{1}\Vert_{\Lp^{2}(\OF)},\Vert\varepsilon(\xi_{0})\Vert_{\Lp^{2}(\OS)},\Vert\xi_{1}\Vert_{\H1^{1}(\OS)},\Vert\xi_{2}\Vert_{\Lp^{2}(\OS)}\right)>0,
\end{align*}
we set 
\begin{align*}
X_{T}^{0,M}:=\left\{ v\in X_{T}:v(0)=u_{0},\,\dot{v}(0)=u_{1},\,\Vert v\Vert_{\H1^{1}(\H1^{1}(\OF))\cap\C^{1}(\Lp^{2}(\OF))}^{2}\leq M\right\} .
\end{align*}
Note that if $u_{0}$ is extended constantly in time, then on any
time interval $(0,T)$,
\[
f:=(u_{0}\cdot\nabla)u_{0}\in\Lp^{2}(\H1^{1/2+1/16}(\OF))\cap\H1^{1}(\H1^{-1/2+1/16}(\OF)),
\]
and Theorem \ref{lem:lin} provides a solution of the corresponding
linear system \eqref{eq:lin_system}. In particular, the estimate
(\ref{eq:est_H1H1_step3}) shows that $X_{T}^{0,M}\neq\emptyset$
if $M>0$ is chosen sufficiently large. Given any $\tilde{u}\in X_{T}^{0,M}$,
Lemma \ref{lem:utilde_statt_f-1} a) shows that 
\begin{align*}
\tilde{f}:=(\tilde{u}\cdot\nabla)\tilde{u}\in\Lp^{2}(\H1^{1/2+1/16}(\OF))\cap\H1^{1}(\H1^{-1/2+1/16}(\OF)),
\end{align*}
so Theorem~\ref{lem:lin} provides a solution $(u,p,\xi)$ of the
linear system (\ref{eq:lin_system}) with $f=\tilde{f}$. It remains
to prove that the map $S\colon\tilde{u}\to u$ is well-defined and
a contraction from $X_{T}^{0,M}$ to $X_{T}^{0,M}$. Theorem~\ref{lem:lin}
shows that $u\in X_{T}$ attains the correct initial values. To get
\begin{align*}
\Vert u\Vert_{\H1^{1}(\H1^{1}(\OF))\cap\C^{1}(\Lp^{2}(\OF))}^{2}\leq M
\end{align*}
for $T>0$ sufficiently small, we use estimate \eqref{eq:est_H1H1_step3}
to get
\begin{align*}
 & \Vert u\Vert_{\H1^{1}(\H1^{1}(\OF))\cap\C^{1}(\Lp^{2}(\OF))}^{2}+\Vert\varepsilon(\xi)\Vert_{\C^{0}(\Lp^{2}(\OS))}^{2}+\Vert\dot{\xi}\Vert_{\C^{0}(\H1^{1}(\OS))\cap\C^{1}(\Lp^{2}(\OS))}^{2}\\
\leq & \,C\Bigg(\Vert u_{0}\Vert_{\Lp^{2}(\OF)}^{2}+\Vert u_{1}\Vert_{\Lp^{2}(\OF)}^{2}+\Vert\varepsilon(\xi_{0})\Vert_{\Lp^{2}(\OS)}^{2}+\Vert\xi_{1}\Vert_{\H1^{1}(\OS)}^{2}+\Vert\xi_{2}\Vert_{\Lp^{2}(\OS)}^{2}\\
 & \hspace{0.5cm}+\int_{0}^{T}\int_{\OF}(\tilde{u}\cdot\nabla)\tilde{u}\cdot u\,\dx y\dx s+\int_{0}^{T}\int_{\OF}\left((\dot{\tilde{u}}\cdot\nabla)\tilde{u}+(\tilde{u}\cdot\nabla)\dot{\tilde{u}}\right)\cdot\dot{u}\,\dx y\dx s\Bigg).
\end{align*}
Now Lemma \ref{lem:utilde_statt_f-1} implies
\begin{align*}
 & \int_{0}^{T}\int_{\OF}(\tilde{u}\cdot\nabla)\tilde{u}\cdot u\,\dx y\dx s+\int_{0}^{T}\int_{\OF}\left((\dot{\tilde{u}}\cdot\nabla)\tilde{u}+(\tilde{u}\cdot\nabla)\dot{\tilde{u}}\right)\cdot\dot{u}\,\dx y\dx s\\
\leq & \,CT^{\alpha}\left(\Vert\tilde{u}\Vert_{\H1^{1}(\H1^{1}(\OF))}+\Vert u_{0}\Vert_{\H1^{1}(\OF)}\right)\Vert\tilde{u}\Vert_{\H1^{1}(\H1^{1}(\OF))\cap\C^{1}(\Lp^{2}(\OF))}\Vert u\Vert_{\H1^{1}(\H1^{1}(\OF))\cap\C^{1}(\Lp^{2}(\OF))}
\end{align*}
and hence 
\begin{align}
\begin{split} & \Vert u\Vert_{\H1^{1}(\H1^{1}(\OF))\cap\C^{1}(\Lp^{2}(\OF))}^{2}+\Vert\varepsilon(\xi)\Vert_{\C^{0}(\Lp^{2}(\OS))}^{2}+\Vert\dot{\xi}\Vert_{\C^{0}(\H1^{1}(\OS))\cap\C^{1}(\Lp^{2}(\OS))}^{2}\\
\leq & \,C\Big(\Vert u_{0}\Vert_{\Lp^{2}(\OF)}^{2}+\Vert u_{1}\Vert_{\Lp^{2}(\OF)}^{2}+\Vert\varepsilon(\xi_{0})\Vert_{\Lp^{2}(\OS)}^{2}+\Vert\xi_{1}\Vert_{\H1^{1}(\OS)}^{2}+\Vert\xi_{2}\Vert_{\Lp^{2}(\OS)}^{2}\\
 & \hspace{0.5cm}+T^{\alpha}\left(M^{2}+\Vert u_{0}\Vert_{\H1^{1}(\OF)}^{2}\right)M^{2}\Big).
\end{split}
\label{eq:estnonlinear}
\end{align}
Thus for $M>0$ sufficiently large and $T=T(M,\Vert u_{0}\Vert_{\H1^{1}(\OF)})>0$
sufficiently small, we obtain 
\begin{align*}
\Vert u\Vert_{\H1^{1}(\H1^{1}(\OF))\cap\C^{1}(\Lp^{2}(\OF))}^{2}\leq M
\end{align*}
and consequently $u\in X_{T}^{0,M}$, such that $S:X_{T}^{0,M}\to X_{T}^{0,M}$
is well-defined. To prove that $S$ is also a contraction, let $(u^{1},p^{1},\xi^{1}),\,(u^{2},p^{2},\xi^{2})\in X_{T}\times Y_{T}\times Z_{T}$
denote the solutions of (\ref{eq:lin_system}) corresponding to some
$\tilde{u}^{1},\,\tilde{u}^{2}\in X_{T}^{0,M}$, respectively. By
repeating the calculations from \eqref{eq:estnonlinear} for the difference
$u^{1}-u^{2}$, we obtain that 
\begin{align}
\begin{split} & \Vert u^{1}-u^{2}\Vert_{\H1^{1}(\H1^{1}(\OF))\cap\C^{1}(\Lp^{2}(\OF))}^{2}+\Vert\varepsilon(\xi^{1}-\xi^{2})\Vert_{\C^{0}(\Lp^{2}(\OS))}^{2}+\Vert\dot{\xi}^{1}-\dot{\xi}^{2}\Vert_{\C^{0}(\H1^{1}(\OS))\cap\C^{1}(\Lp^{2}(\OS))}^{2}\\
\leq & \,CT^{\alpha}\left(M^{2}+\Vert u_{0}\Vert_{\H1^{1}(\OF)}^{2}\right)\Vert\tilde{u}^{1}-\tilde{u}^{2}\Vert_{X_{T}}^{2}.
\end{split}
\label{eq:step4a}
\end{align}
Furthermore, using Theorem~\ref{thm:elliptic_stokes} with $s=0$,
Theorem~\ref{thm:elliptic_lame} with $s=1$ and Lemma~\ref{lem:utilde_statt_f-1}
b) yields 
\begin{align}
\Vert u^{1}-u^{2}\Vert_{\Lp^{2}(\H1^{2}(\OF))} & \leq\,C\Big(\Vert\dot{u}^{1}-\dot{u}^{2}\Vert_{\Lp^{2}(\Lp^{2}(\OF))}+\Vert\tilde{u}^{1}\cdot\nabla(\tilde{u}^{1}-\tilde{u}^{2})\Vert_{\Lp^{2}(\Lp^{2}(\OF))}\label{eq:step4}\\
 & \hspace{0.3cm}+\Vert(\tilde{u}^{1}-\tilde{u}^{2})\cdot\nabla\tilde{u}^{2}\Vert_{\Lp^{2}(\Lp^{2}(\OF))}+\Vert\Sigma(\xi^{1}-\xi^{2})\Vert_{\Lp^{2}(\H1^{1/2}(\dOS))}\Big)\nonumber \\
 & \leq\,C\Big(T^{1/2}\Vert u^{1}-u^{2}\Vert_{\C^{1}(\Lp^{2}(\OF))}+T^{\alpha}(M+\Vert u_{0}\Vert_{\H1^{1}(\OF)})\Vert\tilde{u}^{1}-\tilde{u}^{2}\Vert_{X_{T}}\nonumber \\
 & \hspace{0.3cm}+T^{1/2}\Vert\dot{\xi}^{1}-\dot{\xi}^{2}\Vert_{\C^{1}(\Lp^{2}(\OS))}+T\Vert u^{1}-u^{2}\Vert_{\Lp^{2}(\H1^{2}(\OF))}\Big).\nonumber 
\end{align}
Combining \eqref{eq:step4} and \eqref{eq:step4a} for $T=T\left(M,\,\Vert u_{0}\Vert_{\H1^{1}(\OF)}\right)>0$
sufficiently small, we obtain that 
\begin{align*}
\Vert u^{1}-u^{2}\Vert_{X_{T}}\leq\frac{1}{2}\Vert\tilde{u}^{1}-\tilde{u}^{2}\Vert_{X_{T}}.
\end{align*}
This shows that $S$ admits a unique fixed point $u\in X_{T}^{0,M}\subseteq X_{T}$.
Together with the corresponding $p\in Y_{T},\,\xi\in Z_{T}$, we have
obtained the unique solution to the non-linear system (\ref{eq:nonlin_system}).
\\
 \textbf{Step 4: Additional regularity.}\\
 Finally, we show that by boot-strapping, $(u,p,\xi)\in X_{T}\times Y_{T}\times Z_{T}$
also implies that 
\[
u\in\C^{0}(\H1^{2}(\OF))\text{ and }p\in\C^{0}(\H1^{1}(\OF)).
\]
By Lemma \ref{lem:utilde_statt_f-1}, 
\[
(u\cdot\nabla)u\in\Lp^{2}(\H1^{1/2+1/16}(\OF))\cap\H1^{1}(\H1^{-1/2+1/16}(\OF)),
\]
so $(u\cdot\nabla)u\in\C^{0}(\Lp^{2}(\OF))$, see \cite[Theorem 3.1]{LionsMagenes}.
Hence (\ref{addReg}) follows from Theorem~\ref{thm:elliptic_stokes}
with right-hand side $f=-\dot{u}-(u\cdot\nabla)u\in\C^{0}(\Lp^{2}(\OF))$
and Neumann boundary data $\Sigma(\xi)n\in\C^{0}(\H1^{3/2}(\dOS))$.
This concludes the proof of Theorem~\ref{thm:lokale_existenz}. 
\end{proof}

\section{Existence of global solutions for small data}

\label{SecGlobal}

\label{globEx} We show that for small initial data, the local solution
extends to a global one. Recall the energy 
\[
E(t):=\Vert u(t)\Vert_{\Lp^{2}(\OF)}^{2}+\Vert\dot{\xi}(t)\Vert_{\Lp^{2}(\OS)}^{2}+\int_{\OS}\Sigma(\xi):\varepsilon(\xi)(t)\,\dx y,
\]
associated to system (\ref{eq:nonlin_system}), and the corresponding
higher-order quantity 
\[
K(t):=\Vert\dot{u}(t)\Vert_{\Lp^{2}(\OF)}^{2}+\Vert\ddot{\xi}(t)\Vert_{\Lp^{2}(\OS)}^{2}+\int_{\OS}\Sigma(\dot{\xi}):\varepsilon(\dot{\xi})(t)\,\dx y.
\]

\begin{thm}
\label{thm:globale_existenz} There exist constants $C_{u}>0,\,C_{E}>0,\,C_{K}>0$
such that for any initial data 
\begin{align*}
d:=(u_{0},u_{1},p_{0},\xi_{0},\xi_{1},\xi_{2})\in\H1^{2}(\OF)\times\H1^{1}(\OF)\times\H1^{1}(\OF)\times\H1^{2}(\OS)\times\H1^{1}(\OS)\times\Lp^{2}(\OS)
\end{align*}
satisfying the compatibility conditions (\ref{eq:compatibility_thm})
and the bounds 
\begin{align}
\Vert u_{0}\Vert_{\H1^{1}(\OF)}\leq C_{u},\hspace{0.5cm}E(0)\leq C_{E},\hspace{0.5cm}K(0)\leq C_{K},\label{inbounds}
\end{align}
the corresponding unique solution $(u,p,\xi)$ to (\ref{eq:nonlin_system})
exists up to any time $T>0$. 
\end{thm}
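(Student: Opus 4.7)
The plan is to combine Theorem~\ref{thm:lokale_existenz} with a continuation/bootstrap argument. The local existence time provided by that theorem depends only on
\[
\Vert u_{0}\Vert_{\H1^{1}(\OF)},\,\Vert u_{1}\Vert_{\Lp^{2}(\OF)},\,\Vert\varepsilon(\xi_{0})\Vert_{\Lp^{2}(\OS)},\,\Vert\xi_{1}\Vert_{\H1^{1}(\OS)},\,\Vert\xi_{2}\Vert_{\Lp^{2}(\OS)},
\]
and, by Korn's inequality (cf.\@ (\ref{eq:Korn_xi_step3_1})--(\ref{eq:Korn_xi_step3_2})), all of these at time $t$ are dominated by the master quantity
\[
\mathcal{E}(t):=\Vert u(t)\Vert_{\H1^{1}(\OF)}^{2}+E(t)+K(t).
\]
If $\mathcal{E}$ stays bounded on the maximal existence interval $[0,T^{*})$, restarting local existence arbitrarily close to $T^{*}$ would extend the solution beyond $T^{*}$, contradicting maximality. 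The task is thus reduced to proving $\mathcal{E}(t)\le\delta^{2}$ on $[0,T^{*})$ for some small $\delta$, provided $C_{u},C_{E},C_{K}$ in (\ref{inbounds}) are chosen sufficiently small.

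\smallskip
\textit{Energy identities.} First I would derive the analogues of (\ref{EE}) and (\ref{eq:Kgleichung}) for the nonlinear system by taking $f=-(u\cdot\nabla)u$ in the linearized identities. Testing the first equation of (\ref{eq:nonlin_system}) with $u$ and the elastic equation with $\dot\xi$, and using $\Div u=0$, $u|_{\dO}=0$ and $u|_{\dOS}=\dot\xi$, the convective term collapses to a single cubic boundary defect $\tfrac{1}{2}\int_{\dOS}|\dot\xi|^{2}\dot\xi\cdot n\,\dx S$, which by the trace embedding $\H1^{1}(\OS)\hookrightarrow\Lp^{3}(\dOS)$ is bounded by $C\Vert\dot\xi\Vert_{\H1^{1}(\OS)}^{3}\le C\mathcal{E}^{3/2}$. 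Testing the time-differentiated system with $(\dot u,\ddot\xi)$ introduces the trilinear bulk terms $\int(\dot u\cdot\nabla)u\cdot\dot u$ and $\int(u\cdot\nabla)\dot u\cdot\dot u$, together with an analogous boundary defect, all controllable by Lemma~\ref{lem:est_int_dtugradu_H1-1}. Combined this yields
\[
E(t)+K(t)\le E(0)+K(0)+C\int_{0}^{t}\mathcal{E}(s)^{3/2}\dx s.
\]

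\smallskip
\textit{Pointwise control of $\Vert u(t)\Vert_{\H1^{1}}$.} The above integral bound does not directly control $\Vert u(t)\Vert_{\H1^{1}}$ pointwise. To close this, I would apply the stationary Stokes regularity (Theorem~\ref{thm:elliptic_stokes}) pointwise in $t$ to
\[
-\Div\sigma(u,p)=-\dot u-(u\cdot\nabla)u,\quad\Div u=0,\quad\sigma(u,p)n=\Sigma(\xi)n\text{ on }\dOS,\quad u=0\text{ on }\dO,
\]
coupled with the Lam\'e estimate (Theorem~\ref{thm:elliptic_lame}) for $\xi$ driven by $\ddot\xi$ with Dirichlet data $\sigma(u,p)n$. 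The nonlinearity is absorbed via $\Vert(u\cdot\nabla)u\Vert_{\Lp^{2}}\le C\Vert u\Vert_{\H1^{2}}^{1/2}\Vert u\Vert_{\H1^{1}}^{3/2}$, and the cross-coupling between the two elliptic estimates is absorbed thanks to the smallness of $\mathcal{E}$. This yields $\Vert u(t)\Vert_{\H1^{2}}+\Vert\xi(t)\Vert_{\H1^{2}}\le C\mathcal{E}(t)^{1/2}$, and in particular $\Vert u(t)\Vert_{\H1^{1}}^{2}\le C(E(t)+K(t))$, so that $\mathcal{E}(t)\le C(E(t)+K(t))$ throughout.

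\smallskip
\textit{Closing the argument and main obstacle.} The previous steps combine into an integral inequality of the form $\mathcal{E}(t)\le C_{0}\mathcal{E}(0)+C_{1}\int_{0}^{t}\mathcal{E}(s)^{3/2}\dx s$ on $[0,T^{*})$. A standard continuity/bootstrap argument then shows that, for $C_{u},C_{E},C_{K}$ sufficiently small, $\mathcal{E}(t)$ stays below a fixed threshold, and the first step rules out $T^{*}<\infty$. I expect the principal obstacle to lie in the elliptic coupling of Step~3: closing the coupled Stokes--Lam\'e $\H1^{2}$-estimate without invoking any bound on $\Vert\xi(t)\Vert_{\Lp^{2}}$---which, as the introduction stresses, is not controlled by the equations---forces one to work only with $\varepsilon(\xi)$ and $\nabla\xi$, and requires the cross-coupling constants to be uniformly absorbable under the smallness hypothesis throughout $[0,T^{*})$.
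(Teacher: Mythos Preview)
Your closing inequality $\mathcal{E}(t)\le C_{0}\mathcal{E}(0)+C_{1}\int_{0}^{t}\mathcal{E}(s)^{3/2}\dx s$ does \emph{not} give a global bound: the comparison ODE $y'=C_{1}y^{3/2}$ blows up at time $\sim\mathcal{E}(0)^{-1/2}$, so smallness of the data only postpones blowup, it does not prevent it. This is the central gap. The paper's proof avoids Gronwall altogether by keeping the dissipation on the left of the energy equality and bounding the trilinear term \emph{pointwise by a small multiple of the dissipation integrand}. Concretely, from $\H1^{1/2}\hookrightarrow\Lp^{3}$, $\H1^{1}\hookrightarrow\Lp^{6}$ and a bootstrap hypothesis $\Vert u(s)\Vert_{\H1^{1}}\le 1$ one gets $\big|\int_{\OF}(u\cdot\nabla)u\cdot u\big|\le \hat{C}E(s)^{1/4}\Vert\varepsilon(u(s))\Vert_{\Lp^{2}}^{2}$, so that (\ref{eq:energy_eq_extended}) becomes
\[
E(t)+\int_{0}^{t}\big(4\nu-\hat{C}E(s)^{1/4}\big)\Vert\varepsilon(u(s))\Vert_{\Lp^{2}}^{2}\dx s\le E(0),
\]
and a continuity argument yields $E(t)\le E(0)$ for \emph{all} $t$, together with $\int_{0}^{\infty}\Vert\varepsilon(u)\Vert_{\Lp^{2}}^{2}<\infty$. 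The same mechanism works for $K$: the terms $\int(\dot u\cdot\nabla)u\cdot\dot u$ and $\int(u\cdot\nabla)\dot u\cdot\dot u$ are bounded by $C\Vert\varepsilon(u)\Vert_{\Lp^{2}}\Vert\varepsilon(\dot u)\Vert_{\Lp^{2}}^{2}$, and the paper then shows $\Vert\varepsilon(u(s))\Vert_{\Lp^{2}}^{2}\le C(E(0)+K(s))$ by differentiating the energy equality, so the prefactor is again small. Your estimate $\int_{0}^{t}\mathcal{E}^{3/2}$ throws away exactly the dissipative structure that makes the argument close.

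Your Step~3 is also not workable as written. The cross-coupling between the pointwise Stokes and Lam\'e estimates has $O(1)$ constants (the boundary data for one problem is the full $\H1^{1/2}$-trace coming from the $\H1^{2}$-norm of the other), so smallness of $\mathcal{E}$ does not help to absorb it; in the paper this circularity is broken only by a factor $T^{1/2}$ in the local-existence step, not pointwise. Moreover ``Dirichlet data $\sigma(u,p)n$'' for the Lam\'e problem is a slip: that is the Neumann datum, while the natural Dirichlet datum $\xi|_{\dOS}=\xi_{0}|_{\dOS}+\int_{0}^{t}u|_{\dOS}$ grows in $t$. The paper sidesteps all of this: once the dissipation integrals $\int_{0}^{\infty}\Vert\varepsilon(u)\Vert_{\Lp^{2}}^{2}$ and $\int_{0}^{\infty}\Vert\varepsilon(\dot u)\Vert_{\Lp^{2}}^{2}$ are bounded, the fundamental theorem gives $\Vert u(t)\Vert_{\H1^{1}}^{2}\le c_{2}\Vert\varepsilon(u_{0})\Vert_{\Lp^{2}}^{2}+c_{2}\int_{0}^{t}\Vert\varepsilon(u)\Vert_{\Lp^{2}}^{2}+c_{2}\int_{0}^{t}\Vert\varepsilon(\dot u)\Vert_{\Lp^{2}}^{2}$, which is uniformly small without any elliptic coupling.
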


\begin{proof}
From Theorem \ref{thm:lokale_existenz} we obtain a unique solution
\begin{align*}
(u,p,\xi)\in & \left(\C^{0}(0,T;\H1^{2}(\OF))\cap\H1^{1}(0,T;\H1^{1}(\OF))\cap\C^{1}(0,T;\Lp^{2}(\OF))\right)\\
 & \times\C^{0}(0,T;\H1^{1}(\OF))\\
 & \times\left(\C^{0}(0,T;\H1^{2}(\OS))\cap\C^{1}(0,T;\H1^{1}(\OS))\cap\C^{2}(0,T;\Lp^{2}(\OS))\right)
\end{align*}
up to some time 
\begin{align*}
T\left(\Vert u_{0}\Vert_{\H1^{1}(\OF)},\Vert u_{1}\Vert_{\Lp^{2}(\OF)},\Vert\varepsilon(\xi_{0})\Vert_{\Lp^{2}(\OS)},\Vert\xi_{1}\Vert_{\H1^{1}(\OS)},\Vert\xi_{2}\Vert_{\Lp^{2}(\OS)}\right)>0.
\end{align*}
We show that if condition (\ref{inbounds}) is satisfied for suitable
$C_{E},C_{K},C_{u}>0$, then 
\[
\Vert u(t)\Vert_{\H1^{1}(\OF)}\leq1,\,E(t)\leq E(0)\text{ and }K(t)\leq K(0)\text{ for all }t\in[0,T).
\]
We find some $0<\delta<1$ such that we can choose $M:=\frac{c_{1}}{2c_{2}}$
in the proof of Theorem \ref{thm:lokale_existenz}, if the initial
data satisfy 
\[
\max\left\{ \Vert u_{0}\Vert_{\H1^{1}(\OF)}^{2},\Vert u_{1}\Vert_{\Lp^{2}(\OF)}^{2},\Vert\varepsilon(\xi_{0})\Vert_{\Lp^{2}(\OS)}^{2},\Vert\xi_{1}\Vert_{\H1^{1}(\OS)}^{2},\Vert\xi_{2}\Vert_{\Lp^{2}(\OS)}^{2}\right\} \leq\frac{\delta}{2}.
\]
There exists some constant $C_{0}>0$ such that 
\[
\Vert u_{0}\Vert_{\Lp^{2}(\OF)}^{2}+\Vert u_{1}\Vert_{\Lp^{2}(\OF)}^{2}+\Vert\varepsilon(\xi_{0})\Vert_{\Lp^{2}(\OS)}^{2}+\Vert\xi_{1}\Vert_{\H1^{1}(\OS)}^{2}+\Vert\xi_{2}\Vert_{\Lp^{2}(\OS)}^{2}\leq C_{0}(E(0)+K(0)),
\]
so we consider initial data satisfying $C_{0}(E(0)+K(0))\leq\frac{\delta}{2}$
and assume that $\Vert u_{0}\Vert_{\H1^{1}(\OF)}^{2}\leq\min\left\{ \frac{\delta}{2},\frac{M\delta}{2}\right\} ,$so
that $\Vert u\Vert_{\H1^{1}(0,T;\H1^{1}(\OF))}^{2}\leq M=\frac{c_{1}}{2c_{2}}.$
By Korn's and Poincaré's inequalities, there exist constants $c_{1},c_{2}>0$
such that 
\begin{align}
c_{1}\Vert\varepsilon(v)\Vert_{\Lp^{2}(\OF)}^{2} & \leq\Vert v\Vert_{\H1^{1}(\OF)}^{2}\leq c_{2}\Vert\varepsilon(v)\Vert_{\Lp^{2}(\OF)}^{2}\label{eq:Korn}
\end{align}
holds for all $v\in\H1^{1}(\OF)$ with partially vanishing trace at
the boundary $\dOF$. In particular, this is true for $u(t)$ and
$\dot{u}(t)$. Hence 
\begin{align}
\Vert u(t)\Vert_{\H1^{1}(\OF)}^{2} & \leq c_{2}\Vert\varepsilon(u_{0})\Vert_{\Lp^{2}(\OF)}^{2}+c_{2}\int_{0}^{t}\int_{\OF}2\varepsilon(u):\varepsilon(\dot{u})\,\dx y\dx s\leq\frac{c_{2}M\delta}{2c_{1}}+\frac{c_{2}M}{c_{1}}<1\label{eq:H1_u_1}
\end{align}
for all $t\in[0,T)$. Next, we want to bound $E$. As in Theorem~\ref{lem:lin},
we obtain the energy equality 
\begin{align}
E(t)+\int_{0}^{t}2\nu\Vert\varepsilon(u(s))\Vert_{\Lp^{2}(\OF)}^{2}\,\dx s=E(0)-\int_{0}^{t}\int_{\OF}(u\cdot\nabla)u\cdot u\,\dx y\dx s.\label{eq:energy_eq_extended}
\end{align}
For the second term on the right-hand side of (\ref{eq:energy_eq_extended}),
using the usual embeddings and interpolation and the $1$-bound in
(\ref{eq:H1_u_1}) to decrease exponents, we estimate 
\begin{align}
\int_{\OF}[(u\cdot\nabla)u\cdot u](s)\,\dx y & \leq C\Vert u(s)\Vert_{\Lp^{2}(\OF)}^{1/2}\Vert u(s)\Vert_{\H1^{1}(\OF)}^{2}\leq\hat{C}E(s)^{1/2}\Vert\varepsilon(u(s))\Vert_{\Lp^{2}(\OF)}^{2}\label{eq:graduu}
\end{align}
for some fixed $\hat{C}>0$ that depends only on $\OF$. This leads
to 
\begin{align}
E(t)+\int_{0}^{t}(2\nu-\hat{C}E(s)^{1/2})\Vert\varepsilon(u(s))\Vert_{\Lp^{2}(\OF)}^{2}\,\dx s\leq E(0).\label{eq:zwischenschritt_E}
\end{align}
If the initial data is chosen so small that 
\begin{align}
2\nu-\hat{C}E(0)^{1/2}>0,\label{eq:cond_for_E0}
\end{align}
then we can show that 
\begin{align}
E(t)\leq E(0)\hspace{0.3cm}\text{for all }t\in[0,T):\label{eq:bound_E}
\end{align}
Assume to the contrary that there is a time $t_{0}\in(0,T)$ such
that $E(t_{0})>E(0)$. Because of (\ref{eq:cond_for_E0}), there is
some $\tilde{E}>E(0)$ which still satisfies $2\nu-\hat{C}\tilde{E}^{1/2}>0$.
As $E$ is continuous, choose a time $t_{1}\in(0,T)$ such that $E(t_{1})>E(0)$
and $E(t)\leq\tilde{E}\hspace{0.3cm}\text{for all }t\leq t_{1}.$But
then 
\begin{align*}
E(t_{1})+\underbrace{\int_{0}^{t_{1}}\left(2\nu-\hat{C}\tilde{E})^{1/2}\right)\Vert\varepsilon(u(s))\Vert_{\Lp^{2}(\OF)}^{2}\,\dx s}_{\geq0}\leq E(0),
\end{align*}
a contradiction. Moreover, we will need later that by (\ref{eq:zwischenschritt_E})
and (\ref{eq:bound_E}), 
\begin{align}
\int_{0}^{t}\Vert\varepsilon(u(s))\Vert_{\Lp^{2}(\OF)}\,\dx s\leq\frac{E(0)}{2\nu-\hat{C}E(0)^{1/2}}.\label{eq:bound_int_eps_u}
\end{align}
Next, we derive a similar result for the higher-order quantity $K$.
By Theorem~\ref{lem:lin}, 
\[
K(t)+\int_{0}^{t}2\nu\Vert\varepsilon(\dot{u}(s))\Vert_{\Lp^{2}(\OF)}^{2}\,\dx s=K(0)-\int_{0}^{t}\int_{\OF}\left((\dot{u}\cdot\nabla)u+(u\cdot\nabla)\dot{u}\right)\cdot\dot{u}\,\dx y\dx s.
\]
First, using \eqref{eq:Korn}, we estimate 
\begin{align}
\int_{\OF}[(\dot{u}\cdot\nabla)u\cdot\dot{u}](s)\,\dx y & \leq C\Vert\varepsilon(u(s))\Vert_{\Lp^{2}(\OF)}\Vert\varepsilon(\dot{u}(s))\Vert_{\Lp^{2}(\OF)}^{2},\label{eq:est_dtugradudtu_holder}
\end{align}
and similarly 
\begin{align}
\begin{split}\int_{\OF}[(u\cdot\nabla)\dot{u}\cdot\dot{u}](s)\,\dx y\leq C\Vert\varepsilon(u(s))\Vert_{\Lp^{2}(\OF)}\Vert\varepsilon(\dot{u}(s))\Vert_{\Lp^{2}(\OF)}^{2}.\end{split}
\label{eq:est_dtugradudtu_holder_2}
\end{align}
Moreover, using the differentiated energy balance, the definition
of $E$ and $K,$ \eqref{eq:graduu} and (\ref{eq:bound_E}), we have
\begin{align*}
\begin{split}2\nu\Vert\varepsilon(u(s))\Vert_{\Lp^{2}(\OF)}^{2} & =-\dot{E}(s)-\int_{\OF}[(u\cdot\nabla)u\cdot u](s)\,\dx y\\
 & \leq C\left(E(s)+K(s)\right)+\hat{C}E(s)^{1/2}\Vert\varepsilon(u(s))\Vert_{\Lp^{2}(\OF)}^{2}\\
 & \leq C\left(E(0)+K(s)\right)+\hat{C}E(0)^{1/2}\Vert\varepsilon(u(s))\Vert_{\Lp^{2}(\OF)}^{2}
\end{split}
\end{align*}
for all $s\in[0,T)$. Hence, (\ref{eq:cond_for_E0}) implies 
\begin{align}
\begin{split}\Vert\varepsilon(u(s))\Vert_{\Lp^{2}(\OF)}^{2}\leq C\left(E(0)+K(s)\right).\end{split}
\label{eq:est_epsu}
\end{align}
Combining (\ref{eq:est_dtugradudtu_holder}), (\ref{eq:est_dtugradudtu_holder_2})
and (\ref{eq:est_epsu}) leads to 
\begin{align}
K(t)+\int_{0}^{t}\left(2\nu-\tilde{C}\left(E(0)+K(s)\right)^{1/2}\right)\Vert\varepsilon(\dot{u}(s))\Vert_{\Lp^{2}(\OF)}^{2}\,\dx s\leq K(0)\label{eq:zwischenschritt_K}
\end{align}
for some fixed $\tilde{C}>0$. If the initial data is chosen so small
that $2\nu-\tilde{C}\left(E(0)+K(0)\right)^{1/2}>0$, then a similar
argument as before shows that 
\begin{align}
K(t)\leq K(0)\hspace{0.3cm}\text{for all }t\in[0,T).\label{eq:bound_K}
\end{align}
Moreover by using (\ref{eq:zwischenschritt_K}) and (\ref{eq:bound_K}),
we obtain that 
\begin{align}
\int_{0}^{t}\Vert\varepsilon(\dot{u})(s)\Vert_{\Lp^{2}(\OF)}^{2}\,\dx s\leq\frac{K(0)}{2\nu-\tilde{C}\left(E(0)+K(0)\right)^{1/2}}.\label{eq:bound_int_eps_dotu}
\end{align}
This proves uniform and even decreasing bounds for $\Vert u(t)\Vert_{\H1^{1}(\OF)}$,
$E(t)$ and $K(t)$ on $[0,T)$. 
 Since the lifespan $T$ of the local solution given in Theorem \ref{thm:lokale_existenz}
only depends decreasingly on the corresponding norms of the initial
data, and compatibility is conserved along solutions, their existence
extends to any finite time interval. 
\end{proof}
\begin{cor}
\label{cor:ConvofFluidvel}Consider initial data $(u_{0},u_{1},p_{0},\xi_{0},\xi_{1},\xi_{2})$
satisfying the conditions of Theorem \ref{thm:globale_existenz}.
Then the corresponding global solution $(u,p,\xi)$ to (\ref{eq:nonlin_system})
satisfies 
\begin{align*}
\lim_{t\to\infty}\Vert u(t)\Vert_{\H1^{1}(\OF)}=0.
\end{align*}
\end{cor}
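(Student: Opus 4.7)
The plan is to extract from the proof of Theorem~\ref{thm:globale_existenz} global-in-time energy bounds that imply $u \in \H1^{1}(0,\infty;\H1^{1}(\OF))$, and then apply the elementary fact that an $\Lp^1(0,\infty)$ function of finite $W^{1,1}$-norm must tend to $0$ at infinity.

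More precisely, first I would revisit the inequalities (\ref{eq:zwischenschritt_E}) and (\ref{eq:zwischenschritt_K}). Since (\ref{eq:cond_for_E0}) and (\ref{eq:cond_for_K0}) hold and $E(s),K(s)$ stay bounded by $E(0),K(0)$ respectively on $[0,\infty)$, the dissipation terms $\int_0^t (4\nu - \hat C E(s)^{1/2})\|\varepsilon(u)\|^2\dx s$ and $\int_0^t (4\nu - \tilde C(E(0)+K(s))^{1/2})\|\varepsilon(\dot u)\|^2\dx s$ have strictly positive coefficients bounded below by some $c_\nu > 0$. Hence
\[
\int_0^\infty \Vert\varepsilon(u(s))\Vert_{\Lp^2(\OF)}^2\,\dx s \leq \frac{E(0)}{c_\nu}, \qquad \int_0^\infty \Vert\varepsilon(\dot u(s))\Vert_{\Lp^2(\OF)}^2\,\dx s \leq \frac{K(0)}{c_\nu}.
\]
By Korn's inequality (\ref{eq:Korn}), applied to both $u(t)$ and $\dot u(t)$ (which vanish on $\partial\Omega$), this upgrades to
\[
u \in \H1^1(0,\infty;\H1^1(\OF)),
\]
i.e. $\int_0^\infty (\|u(t)\|_{\H1^1(\OF)}^2 + \|\dot u(t)\|_{\H1^1(\OF)}^2)\,\dx t < \infty$.

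Now set $g(t) := \Vert u(t)\Vert_{\H1^1(\OF)}^2$. Since $u \in \H1^1(0,\infty;\H1^1(\OF))$, the function $g$ is absolutely continuous on $[0,\infty)$ with
\[
g'(t) = 2\,(u(t),\dot u(t))_{\H1^1(\OF)},
\]
and by Cauchy--Schwarz,
\[
\int_0^\infty |g'(t)|\,\dx t \leq 2\,\Vert u\Vert_{\Lp^2(0,\infty;\H1^1(\OF))}\,\Vert\dot u\Vert_{\Lp^2(0,\infty;\H1^1(\OF))} < \infty.
\]
Thus $g \in W^{1,1}(0,\infty)$, so $\lim_{t\to\infty} g(t)$ exists. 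On the other hand $g \in \Lp^1(0,\infty)$, which forces the limit to be $0$, yielding $\Vert u(t)\Vert_{\H1^1(\OF)} \to 0$.

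The only real subtlety is verifying that the dissipation coefficients in (\ref{eq:zwischenschritt_E}) and (\ref{eq:zwischenschritt_K}) are indeed bounded away from $0$ uniformly in $t$; this is immediate from the already established monotonicity $E(t)\leq E(0)$ and $K(t)\leq K(0)$ together with the smallness conditions (\ref{eq:cond_for_E0}), (\ref{eq:cond_for_K0}). Everything else reduces to Korn's inequality and a one-line $W^{1,1}$-argument, so no further technical obstacle arises.
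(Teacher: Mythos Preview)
Your proof is correct and follows essentially the same approach as the paper: both extract from the global energy/higher-order estimates the integrability $\int_0^\infty\|\varepsilon(u)\|_{\Lp^2}^2\,\dx t<\infty$ and $\int_0^\infty\|\varepsilon(\dot u)\|_{\Lp^2}^2\,\dx t<\infty$, and then use Korn together with the fundamental theorem of calculus on $\|\varepsilon(u(t))\|_{\Lp^2}^2$ (equivalently $\|u(t)\|_{\H1^1}^2$) to conclude decay. The only difference is packaging: the paper writes out an explicit $\varepsilon$--$\delta$ argument starting from a good time $t_\delta^1$, whereas you invoke the one-line lemma that $g\in W^{1,1}(0,\infty)\cap \Lp^1(0,\infty)$ forces $g(t)\to 0$; these are the same computation.
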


\begin{proof}
The global solution $(u,p,\xi)$ satisfies (\ref{eq:bound_int_eps_u})
and (\ref{eq:bound_int_eps_dotu}) for all $t>0$, so the dissipation
rates are globally integrable, 
\begin{align}
\int_{0}^{\infty}\Vert\varepsilon(u(t))\Vert_{\Lp^{2}(\OF)}^{2}\,\dx t<\infty,\hspace{0.2cm}\int_{0}^{\infty}\Vert\varepsilon(\dot{u}(t))\Vert_{\Lp^{2}(\OF)}^{2}\,\dx t<\infty.\label{eq:est_Linfty}
\end{align}
For every $\delta>0$, there is thus a time $T_{\delta}>0$ such that
\[
\int_{T_{\delta}}^{\infty}\Vert\varepsilon(u(t))\Vert_{\Lp^{2}(\OF)}^{2}\,\dx t<\frac{\delta}{2},\hspace{0.2cm}\int_{T_{\delta}}^{\infty}\Vert\varepsilon(\dot{u}(t))\Vert_{\Lp^{2}(\OF)}^{2}\,\dx t<\frac{\delta}{2},
\]
and a time $t_{\delta}^{1}\geq T_{\delta}$ such that 
\[
\Vert\varepsilon(u(t_{\delta}^{1}))\Vert_{\Lp^{2}(\OF)}^{2}\leq\delta.
\]
For $t\geq t_{\delta}^{1}$, by (\ref{eq:Korn}) and the fundamental
theorem of calculus, we thus obtain
\begin{align*}
\Vert u(t)\Vert_{\H1^{1}(\OF)}^{2} & \leq c_{2}\Vert\varepsilon(u(t_{\delta}^{1}))\Vert_{\Lp^{2}(\OF)}^{2}+c_{2}\int_{t_{\delta}^{1}}^{t}\int_{\OF}2\varepsilon(u(s)):\varepsilon(\dot{u}(s))\,\dx y\dx s\\
 & \leq c_{2}\Vert\varepsilon(u(t_{\delta}^{1}))\Vert_{\Lp^{2}(\OF)}^{2}+c_{2}\int_{t_{\delta}^{1}}^{t}\Vert\varepsilon(u(s))\Vert_{\Lp^{2}(\OF)}^{2}\,\dx s+c_{2}\int_{t_{\delta}^{1}}^{t}\Vert\varepsilon(\dot{u}(s))\Vert_{\Lp^{2}(\OF)}^{2}\,\dx s\\
 & \leq2c_{2}\delta.
\end{align*}
\end{proof}
A consequence of $u(t)\to0$ as $t\to\infty$ is that the fluid pressure
$p(t)$ becomes spatially constant as $t\to\infty$. The precise statement
is the following 
\begin{cor}
\label{cor:convofpressure}Consider initial data $(u_{0},u_{1},p_{0},\xi_{0},\xi_{1},\xi_{2})$
satisfying the conditions of Theorem \ref{thm:globale_existenz} and
the corresponding global solution $(u,p,\xi)$ to (\ref{eq:nonlin_system}).
Define 
\begin{align*}
q(t):=\frac{1}{\vert\OF\vert}\int_{\OF}p(t,y)\dx y\quad\text{and }\qquad\hat{p}(t,y):=p(t,y)-q(t).
\end{align*}
Then
\begin{align*}
\lim_{t\to\infty}\Vert\hat{p}(\cdot+t)\Vert_{\Lp^{2}((0,T/2)\times\OF)}=0\quad\text{and}\quad\lim_{t\to\infty}\Vert\Sigma(\xi(\cdot+t)n-q(\cdot+t)n\Vert_{\Lp^{2}(\H1^{-1/2}(\dOS))}=0.
\end{align*}
\end{cor}

\begin{proof}
For general $f\in\Lp^{2}((0,T/2)\times\OF)$ define 
\begin{align*}
\hat{f}(t,y):=f(t,y)-\frac{1}{\vert\OF\vert}\int_{\OF}f(t,y)\dx y
\end{align*}
as above. By {\cite[Theorem~2.5]{GHH06}}, i.e. applying the Bogovskii
operator, there exists some $g\in\Lp^{2}(\H1_{0}^{1}(\OF))$ such
that $\Div(g)=\hat{f}$ and $\Vert g\Vert_{\Lp^{2}(\H1^{1}(\OF))}\leq C\Vert f\Vert_{\Lp^{2}((0,T/2)\times\OF)}$.
Hence 
\begin{align*}
\int_{0}^{T/2}\int_{\OF}\hat{p}(s+t,y)f(s,y)\,\dx y & =\,\int_{0}^{T/2}\int_{\OF}\hat{p}(s+t,y)\hat{f}(s,y)\,\dx y\dx s\\
 & =\,-\int_{0}^{T/2}\int_{\OF}\nabla\hat{p}(s+t,y)\cdot g(s,y)\,\dx y\dx s=\\
 & =\,-\int_{0}^{T/2}\int_{\OF}(\Delta u-\dot{u}-(u\cdot\nabla)u)(s+t,y)\cdot g(s,y)\,\dx y\dx s\\
 & \leq\,C\left(\int_{0}^{T/2}\Vert(\varepsilon(u(s+t))\Vert_{\Lp^{2}(\OF)}^{2}\,\dx y\right)\Vert g\Vert_{\Lp^{2}(\H1^{1}(\OF))}^{2}\\
 & +C\left(\int_{0}^{T/2}\left(\Vert\dot{u}(s+t)\Vert_{\Lp^{2}(\OF)}^{2}+\Vert u(s+t)\Vert_{\H1^{1}(\OF)}^{4}\right)\,\dx s\right)\Vert g\Vert_{\Lp^{2}(\H1^{1}(\OF)).}^{2}
\end{align*}
By \eqref{eq:est_Linfty}, the terms on the right-hand side vanish
as $t\to\infty$. Now let 
\[
E_{F}:\Lp^{2}(\H1^{1/2}(\dOS))\to\Lp^{2}(\H1^{1}(\OF))
\]
denote a bounded linear extension operator that satisfies $\text{supp}(E_{F}h(t))\subset\subset\Omega$
for all $h\in\Lp^{2}(\H1^{1/2}(\dOS))$ and $t\in(0,T/2)$. Then similarly,
\begin{align*}
 & \int_{t}^{t+T/2}\int_{\dOS}(\Sigma(\xi)n-qn)\cdot h\,\dx S(y)\dx s\\
=\, & \int_{t}^{t+T/2}\int_{\dOS}\sigma(u,\hat{p})n\cdot h\,\dx S(y)\dx s\\
=\, & \int_{t}^{t+T/2}\int_{\OF}\sigma(u,\hat{p}):\nabla(E_{F}h)\,\dx y\dx s+\int_{0}^{T/2}\int_{\OF}\Div(\sigma(u,\hat{p}))\cdot E_{F}h\,\dx y\dx s\\
\leq\, & \left(\int_{t}^{t+T/2}\Vert\varepsilon(u)\Vert_{\Lp^{2}(\OF)}^{2}\,\dx s+C\Vert\hat{p}(\cdot+t)\Vert_{\Lp^{2}((0,T/2)\times\OF)}^{2}\right)\Vert E_{F}h\Vert_{\Lp^{2}(\H1^{1}(\OF))}^{2}\\
 & +C\left(\int_{t}^{t+T/2}\Vert\dot{u}\Vert_{\Lp^{2}(\OF)}^{2}+\Vert u\Vert_{\H1^{1}(\OF)}^{4}\,\dx s\right)\Vert E_{F}h\Vert_{\Lp^{2}(\H1^{1}(\OF))}^{2}\to0,\,\text{as }t\to\infty.
\end{align*}
\end{proof}

\section{Long-Time Behaviour of $\xi$}

\label{SecXi}By Corollaries \ref{cor:ConvofFluidvel} and \ref{cor:convofpressure},
the fluid velocity vanishes, $u(t)\to0$, and the fluid pressure $p(t)$
becomes spatially constant as $t\to\infty$. The results of \cite{LLT1986,KTZ2011}
show continuous dependence on the data for regular solutions of the
Lamé system. Therefore, after taking care of initial deformations
$\xi_{0}|_{\dOS}$ at the interface in a suitable way, it is natural
to consider solutions $(\eta,q)$ of the system
\begin{align}
\begin{split}\begin{cases}
\begin{array}{rcll}
\ddot{\eta}-\Div(\Sigma(\eta)) & = & 0 & \text{in }(0,T)\times\OS,\\
\eta & = & 0 & \text{on }(0,T)\times\dOS,\\
\Sigma(\eta)n & = & qn & \text{on }(0,T)\times\dOS,
\end{array}\end{cases}\end{split}
\label{eq:xi_system_A}
\end{align}
with scalar $q(t)\in\mathbb{R}$ as the candidate limit dynamics for
system \eqref{eq:nonlin_system}. Note the presence of two boundary
conditions, but no initial condition in this problem. We denote the
corresponding set $\omega_{T}$ of suitably regular solutions on $(0,T)$
by 
\begin{align}
\begin{split}\omega_{T}:=A_{T}\times Q_{T}:=\big\{(\eta,qn)\in & \big(\C^{0}(\H1^{1}(\OS))\cap\C^{1}(\Lp^{2}(\OS))\cap\C^{2}(\H1^{-1}(\OS))\cap\H1^{2}(\H1^{1}(\OS)^{*})\\
 & \hspace{0.1cm}\times\Lp^{2}(\H1^{-1/2}(\dOS)):\\
 & (\eta,q)\text{ solve (\ref{eq:xi_system_A})}\big\},
\end{split}
\label{eq:defomega}
\end{align}
With a slight abuse of notation we call non-trivial solutions $\omega_{T}\ni(\eta,q)\neq(0,0)$
\emph{pressure waves}. The set $\omega_{T}$ is characterized by the
corresponding overdetermined eigenvalue problem 
\begin{align}
\begin{split}\begin{cases}
\begin{array}{rcll}
-\Div(\psi) & = & \mu\psi & \text{in }\OS,\\
\psi & = & 0 & \text{on }\dOS,\\
\Sigma(\psi)n & = & qn & \text{on }\dOS.
\end{array}\end{cases}\end{split}
\label{eq:xi_overdet_EP-1}
\end{align}
More precisely, by definition, regular solutions $\psi$ of \eqref{eq:xi_overdet_EP-1}
are contained in the set of eigenfunctions of the Dirichlet-Lamé operator
\begin{align*}
\mathcal{L}(\eta) & =-\Div(\Sigma(\eta))\qquad\text{with domain}\\
D(\mathcal{L}) & :=\H1_{0}^{1}(\OS)\cap\H1^{2}(\OS)\subset\Lp^{2}(\OS).
\end{align*}
This operator is self-adjoint positive definite and has compact resolvent.
It admits countably many eigenvalues $(\mu_{k})\subset(0,\infty)$
that have one-dimensional eigenspaces and can only cluster at infinity.
The corresponding eigenfunctions $(\psi_{k})\subset D(\mathcal{L})$
with indices $k\in K\subseteq\mathbb{N}$ solving 
\begin{align*}
\begin{split}\begin{cases}
\begin{array}{rcll}
\mathcal{L}(\psi_{k}) & = & \mu_{k}\psi_{k} & \text{in }\OS,\\
\psi_{k} & = & 0 & \text{on }\dOS,
\end{array}\end{cases}\end{split}
\end{align*}
can be chosen to form an orthonormal basis. Now by separation of variables,
it is straightforward to show that
\begin{align}
\omega_{T}= & \left\{ \sum_{k\in I}(a_{k}\sin(\sqrt{\mu_{k}}t)+b_{k}\cos(\sqrt{\mu_{k}}t))(\psi_{k}(y),q_{k}):I\subset\mathbb{N},\,(\psi_{k},q_{k})\text{ solve }(\ref{eq:xi_overdet_EP-1}),\,(a_{k}),(b_{k})\in\mathbb{R}\right\} \label{eq:charomega}\\
 & \hspace{0.5cm}\cap(\C^{0}(\H1_{0}^{1}(\OS))\cap\C^{1}(\Lp^{2}(\OS))\cap\C^{2}(\H1^{-1}(\OS))\cap\H1^{2}(\H1^{1}(\OS)^{*})))\times\C^{0}(0,T),\nonumber 
\end{align}
where %
\[
I:=\{i\in\mathbb{N}\colon\exists(\psi_{i},q_{i})\neq(0,0)\text{ solution of }\eqref{eq:xi_overdet_EP-1}_{\mu_{i}}\}
\]
is the set of indices $i\in\mathbb{N}$ of non-trivial eigenfunctions
$\psi_{i}$ that solve \eqref{eq:xi_overdet_EP-1}. Clearly $I\subseteq K$,
but no further general characterization of $I$ is known yet. 

Following the work of Avalos and Triggiani (cf.\@ \cite{5AT,ATuniformstab,ATboundary,8AT2009,4AT2009,AT2,Astrongstab}),
we use the following definition. 
\begin{defn}
The domain $\OS$ is called
\end{defn}

\begin{itemize}
\item a \emph{good} domain, if $\psi_{k}=0$, $q_{k}=0$ is the only solution
of (\ref{eq:xi_overdet_EP-1}), i.e. $I=\emptyset$. 
\item a \emph{bad} domain, if (\ref{eq:xi_overdet_EP-1}) admits a non-zero
solution $(\psi_{k},q_{k})$ for some $\mu_{k}>0$, i.e. $I\neq\emptyset$.
\end{itemize}
\begin{prop}
It was shown in \cite{5AT,AT2} for a similar wave-type system that
every domain which is partially flat, partially spherical, partially
elliptic, partially hyperbolic or partially parabolic is a \emph{good}
domain, and this result was transferred to the Lamé system in \cite[Remark~1.1]{ATboundary}. 
\end{prop}

\begin{example}
\label{exa:ball}A known \emph{bad} domain is the ball \cite{5AT}.
For $\OS=B_{r}(0)$, examples of non-zero solutions to (\ref{eq:xi_overdet_EP-1})
are given by 
\begin{align*}
\psi_{i}(y) & :=\left(\frac{r^{2}\sin\left(\frac{r_{i}}{r}\vert y\vert\right)}{r_{i}^{2}\vert y\vert^{3}}-\frac{r\cos\left(\frac{r_{i}}{r}\vert y\vert\right)}{r_{i}\vert y\vert^{2}}\right)y,\\
q_{i} & :=(2\lambda_{1}+\lambda_{2})\sin(r_{i}),
\end{align*}
with eigenvalues 
\begin{align*}
\mu_{i}=\frac{(2\lambda_{1}+\lambda_{2})r_{i}^{2}}{r^{2}},
\end{align*}
where $r_{i}\in(0,\infty)$ is the $i$-th positive root of the spherical
Bessel function 
\begin{align*}
j_{1}(r)=\frac{\sin(r)}{r^{2}}-\frac{\cos(r)}{r}.
\end{align*}
\end{example}

We need a few more definitions in order to state the main result.
For $d=2,3$, the set of all skew-symmetric matrices in $\mathbb{R}^{d\times d}$
is denoted by 
\begin{align*}
\frak{so}(d):=\{A\in\mathbb{R}^{d\times d}:A^{T}=-A^{T}\},
\end{align*}
and the kernel of the symmetrized gradient $\varepsilon$ is denoted
by 
\begin{align*}
\mathcal{R}:=\text{ker}(\varepsilon)=\{r:\mathbb{R}^{d}\to\mathbb{R}^{d}:r(y)=Ay+b,\,A\in\frak{so}(d),\,b\in\mathbb{R}^{d}\}.
\end{align*}
The corresponding orthogonal projection $P_{\mathcal{R}}:\H1^{1}(\OS)\to\H1^{1}(\OS)$
is given by 
\begin{align*}
P_{\mathcal{R}}(f):=skew\left(\frac{1}{\vert\OS\vert}\int_{\OS}\nabla f\,\dx y\right)\left(y-\frac{1}{\vert\OS\vert}\int_{\OS}y\,\dx y\right)+\frac{1}{\vert\OS\vert}\int_{\OS}f\,\dx y,
\end{align*}
where 
\begin{align*}
skew(A):=\frac{1}{2}(A-A^{T}),\hspace{0.5cm}A\in\mathbb{R}^{d\times d}.
\end{align*}
Due to Korn's inequality, we have 
\begin{lem}[{{\cite[Appendix~A.2]{Korn}}}]
\label{lem:xi_korn-1} There exist constants $c,C>0$ such that 
\begin{align*}
c\Vert\varepsilon(f)\Vert_{\Lp^{2}(\OS)}\leq\Vert f-P_{\mathcal{R}}f\Vert_{\H1^{1}(\OS)}\leq C\Vert\varepsilon(f)\Vert_{\Lp^{2}(\OS)}
\end{align*}
for all $f\in\H1^{1}(\OS)$. 
\end{lem}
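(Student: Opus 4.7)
The plan is to establish the two-sided bound as a quotient version of Korn's second inequality with the explicit projection $P_{\mathcal{R}}$ onto the kernel $\mathcal{R}=\ker\varepsilon$ removing the non-uniqueness.

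The lower bound $c\Vert\varepsilon(f)\Vert_{\Lp^2(\OS)}\leq\Vert f-P_{\mathcal{R}}f\Vert_{\H1^1(\OS)}$ is the easy direction. Since $P_{\mathcal{R}}f\in\mathcal{R}=\ker\varepsilon$, we have $\varepsilon(f-P_{\mathcal{R}}f)=\varepsilon(f)$, and $\varepsilon:\H1^1(\OS)\to\Lp^2(\OS)$ is bounded with $\Vert\varepsilon(g)\Vert_{\Lp^2}\leq c^{-1}\Vert g\Vert_{\H1^1}$, which immediately yields the inequality.

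For the upper bound, I would first invoke the classical Korn second inequality
\[
\Vert g\Vert_{\H1^1(\OS)}\leq C_K\bigl(\Vert g\Vert_{\Lp^2(\OS)}+\Vert\varepsilon(g)\Vert_{\Lp^2(\OS)}\bigr)
\]
applied to $g:=f-P_{\mathcal{R}}f$, using again $\varepsilon(g)=\varepsilon(f)$. It then suffices to prove the Poincaré-type estimate
\[
\Vert f-P_{\mathcal{R}}f\Vert_{\Lp^2(\OS)}\leq C\Vert\varepsilon(f)\Vert_{\Lp^2(\OS)},
\]
which I would establish by the standard compactness-contradiction (Peetre-Tartar) argument. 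Suppose no such $C$ exists; then there is a sequence $(f_n)\subset\H1^1(\OS)$ with $g_n:=f_n-P_{\mathcal{R}}f_n$ satisfying $\Vert g_n\Vert_{\Lp^2(\OS)}=1$ and $\Vert\varepsilon(g_n)\Vert_{\Lp^2(\OS)}\to 0$. By Korn's second inequality, $(g_n)$ is bounded in $\H1^1(\OS)$, so Rellich's theorem yields a subsequence converging in $\Lp^2(\OS)$ to some $g^*$ with $\Vert g^*\Vert_{\Lp^2}=1$ and $\varepsilon(g^*)=0$, hence $g^*\in\mathcal{R}$. On the other hand, because $P_{\mathcal{R}}$ is a projection onto $\mathcal{R}$, one checks directly from the explicit formula that $P_{\mathcal{R}}g_n=0$ for all $n$, so $P_{\mathcal{R}}g^*=0$, forcing $g^*=0$ and contradicting $\Vert g^*\Vert_{\Lp^2}=1$.

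The only genuinely technical point is the verification that the explicit $P_{\mathcal{R}}$ given in the text really is the orthogonal (or at least a bounded) projection onto $\mathcal{R}$, in particular that $P_{\mathcal{R}}|_{\mathcal{R}}=\mathrm{Id}$ so that $P_{\mathcal{R}}(f-P_{\mathcal{R}}f)=0$; this is a direct computation using that each $r\in\mathcal{R}$ has the form $r(y)=Ay+b$ with $A\in\frak{so}(d)$, so that $\mathrm{skew}(\tfrac{1}{|\OS|}\int_{\OS}\nabla r\,\dx y)=\mathrm{skew}(A)=A$ and $\tfrac{1}{|\OS|}\int_{\OS}(r-A\cdot(y-\bar y))\,\dx y=b$. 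Everything else reduces to bookkeeping, and the statement is essentially the one quoted from \cite{Korn}.
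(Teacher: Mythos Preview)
Your proof is correct and follows the standard route (Korn's second inequality plus a Peetre--Tartar compactness--contradiction argument to kill the $\Lp^2$ remainder, together with the verification that $P_{\mathcal{R}}$ is an idempotent onto $\mathcal{R}$). Note, however, that the paper does not actually prove this lemma: it is quoted with a citation to \cite[Appendix~A.2]{Korn} and no argument is given in the text, so there is no ``paper's own proof'' to compare against. Your write-up is a complete and self-contained justification of the cited result; the only minor point worth tightening is that $P_{\mathcal{R}}$ involves $\nabla f$ and is therefore not $\Lp^2$-continuous, so in the contradiction step one should pass to the limit using the weak $\H1^1$-convergence of the bounded sequence $(g_n)$ rather than its strong $\Lp^2$-convergence.
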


We denote by $\varphi_{N}$ the solution of the stationary Neumann
system 
\begin{align*}
\begin{cases}
\begin{array}{rcll}
\Div(\Sigma(\varphi)) & = & 0 & \text{ in }\OS,\\
\Sigma(\varphi)n & = & n & \text{ on }\dOS,
\end{array}\end{cases}
\end{align*}
with $P_{\mathcal{R}}\varphi_{N}=0$. For the $q$-parameterized stationary
Neumann system 
\begin{align}
\begin{cases}
\begin{array}{rcll}
\Div(\Sigma(\eta)) & = & 0 & \text{ in }\OS,\\
\Sigma(\eta)n & = & qn & \text{ on }\dOS,
\end{array}\end{cases}\label{eq:xi_stat_system-1}
\end{align}
we define the space 
\begin{align*}
\mathcal{E}:=\{\eta\in\H1^{1}(\OS):\exists q\in\mathbb{R}\text{ s.t. }(\eta,q)\text{ solves }(\ref{eq:xi_stat_system-1})\}=\Span\{\varphi_{N}\}+\mathcal{R}.
\end{align*}
The main result is the following. 
\begin{thm}
\label{thm:main result xi}Let $(u,p,\xi)$ be a global solution to
(\ref{eq:nonlin_system}) as in Theorem \ref{thm:globale_existenz}.
Then 

\begin{align}
\lim_{t\to\infty}\Vert\xi(t)-\eta^{*}(t)-\varphi_{N}^{0}-r(t)\Vert_{\H1^{1}(\OS)}=0,\label{eq:xi_conv_baddomain-1}
\end{align}
where the time-constant displacement $\varphi_{N}^{0}$ is determined
from $\xi_{0}$ via
\begin{align}
\varphi_{N}^{0}:=\frac{\int_{\OS}\Sigma(\xi_{0}):\varepsilon(\varphi_{N})\,\dx y}{\int_{\OS}\Sigma(\varphi_{N}):\varepsilon(\varphi_{N})\,\dx y}\varphi_{N}\in\Span\{\varphi_{N}\}\subset\mathcal{E},\label{eq:def_phiN0}
\end{align}
either $\eta^{*}=0$ or $\eta^{*}\in A$ is a pressure wave and 
\begin{align*}
r(t):=P_{\mathcal{R}}(\xi-\eta^{*})(t)\in\mathcal{R}
\end{align*}
is a rigid motion. In addition, $r$ disappears in rates in the sense
that 
\begin{align}
\lim_{t\to\infty}\Vert\dot{\xi}(t)-\dot{\eta}^{*}(t)\Vert_{\Lp^{2}(\OS)}=0\hspace{0.5cm}\text{ and }\hspace{0.5cm}\lim_{t\to\infty}\Vert\ddot{\xi}(t)-\ddot{\eta}^{*}(t)\Vert_{\H1^{-1}(\OS)}=0.\label{eq:xi_conv_deriv_baddomain-1}
\end{align}
\end{thm}

\begin{rem}
Our interpretation of Theorem \ref{thm:main result xi} is that
\end{rem}

\begin{itemize}
\item up to pressure waves, fluid-viscous damping of the elastic displacement
is established,
\item in specific geometric constellations, pressure waves may persist.
In this case, we have proved convergence of the solution to a fixed
pressure wave. 
\end{itemize}
\begin{rem}
\label{rem:pwsolve11}It is important to note and straightforward
to check that $u=0,(\xi,p)=(\eta,q)\in\omega_{T}$ also provide solutions
to system \eqref{eq:fullsystem-1}. In particular, the non-trivial
long-term behaviour of solutions in Theorem \ref{thm:main result xi}
with $\eta^{*}\neq0$ is not due to the fact that we freeze the fluid
domain $\OF$ and thus lose a potential dissipation mechanism. Regarding
the rigid motions $r$, the situation is less clear. We do not know
whether they can be shown to converge to a rest state. If the motion
may persist here, it can probably still be shown to disappear for
solutions of \eqref{eq:fullsystem-1}.
\end{rem}

\section{Long-time dynamics of $\tilde{\xi}_{t_{0}}$}

\label{SecTilde} Given a global solution $(u,p,\xi)$ of \eqref{eq:nonlin_system}
and $t_{0}>0$, we first analyse the \emph{time differences}
\[
\tilde{\xi}_{t_{0}}(t):=\xi(t_{0}+t)-\xi(t),\hspace{0.2cm}\tilde{p}_{t_{0}}(t):=p(t_{0}+t)-p(t)
\]
and prove \emph{their} convergence to a pressure wave in a suitable
sense. This trick solves some technical issues with missing compactness
in $\xi$ itself and the transmission boundary condition which involves
$\dot{\xi}$ instead of $\xi$. The uniformity with respect to the
shift $t_{0}$ in the following result will then allow us to extend
the analysis to $\xi,p$. The most relevant function space is 
\[
U_{T}:=\C^{0}(0,T;\H1^{1}(\OS))\cap\C^{1}(0,T;\Lp^{2}(\OS))\cap\C^{2}(0,T;\H1^{-1}(\OS))
\]
and $d_{T}:=d^{\,U_{T}}\times d^{\,\Lp^{2}(0,T;\H1^{-1/2}(\dOS))}$
is used as a notation for a corresponding metric. Note that the following
main result of this section is like the characterization of an attractor
for $\tilde{\xi}_{t_{0}},\tilde{p}_{t_{0}}$, but stronger in the
sense that the metric acts on trajectories instead of states. Convergence
to a fixed pressure wave will be proved in Step 2 in Section \ref{SecProof}
below. 
\begin{thm}
\label{thm:xi_approx_A} Let $(u,p,\xi)$ be the global solution of
(\ref{eq:nonlin_system}) given in Theorem \ref{thm:globale_existenz}.
Moreover, let $T>0$ and $t_{0}^{*}\in(0,T/2)$. Then
\begin{align}
\lim_{t\to\infty}\sup_{t_{0}\in(0,t_{0}^{*}]}d_{T/2}\left((\tilde{\xi}_{t_{0}}(\cdot+t),\Sigma(\tilde{\xi}_{t_{0}}(\cdot+t))n),\,\omega_{T/2}\right)=0.\label{eq:xi_thm_approx_A}
\end{align}
\end{thm}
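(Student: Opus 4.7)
The plan is to argue by contradiction and construct a subsequential limit in $\omega_{T/2}$. If the conclusion failed, there would exist $\varepsilon>0$ and sequences $t_n\to\infty$, $t_0^n\in(0,t_0^*]$ with
\[
d_{T/2}\!\left((\tilde\xi_{t_0^n}(\cdot+t_n),\,\Sigma(\tilde\xi_{t_0^n}(\cdot+t_n))n),\,\omega_{T/2}\right)\ge\varepsilon\qquad\text{for all }n,
\]
and the goal is to exhibit some $(\tilde\eta,\Sigma(\tilde\eta)n)\in\omega_{T/2}$ to which a subsequence of the left-hand pair converges in $d_{T/2}$.

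For the compactness step, Theorem~\ref{thm:globale_existenz} together with elliptic Lam\'e regularity gives uniform-in-time bounds on $\xi$ in $\C^{0}(\H1^{2}(\OS))\cap\C^{1}(\H1^{1}(\OS))\cap\C^{2}(\Lp^{2}(\OS))$, and hence the same bounds for $\tilde\xi_{t_0^n}(\cdot+t_n)$ on $(0,T)$, uniformly in $t_0^n\in(0,t_0^*]$. Combining Ascoli--Arzel\`a with the compact embeddings $\H1^{2}(\OS)\hookrightarrow\H1^{1}(\OS)\hookrightarrow\Lp^{2}(\OS)\hookrightarrow\H1^{-1}(\OS)$ and the time-equicontinuity produced by the bounds on $\dot\xi,\ddot\xi$, I pass along a subsequence (not relabelled) to a limit $\tilde\eta\in U_T$, with $t_0^n\to\bar t_0\in[0,t_0^*]$. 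Both $\xi(\cdot+t_0^n)$ and $\xi$ solve the Lam\'e equation, so $\tilde\xi_{t_0^n}$ does as well, and linearity yields $\ddot{\tilde\eta}-\Div(\Sigma(\tilde\eta))=0$ in the distributional limit. Using $u=\dot\xi$ on $\dOS$ from~\eqref{eq:nonlin_system},
\[
\tilde\xi_{t_0}(t)|_{\dOS}=\int_{t}^{t+t_0}u(s)|_{\dOS}\,\dx s,
\]
whose $\H1^{1/2}(\dOS)$-norm is controlled by $t_0^*\sup_{s\in[t,t+t_0^*]}\|u(s)\|_{\H1^{1}(\OF)}$ and tends to $0$ uniformly in $t_0\in(0,t_0^*]$ by the corollary at the end of Section~\ref{SecGlobal}. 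This forces $\tilde\eta|_{\dOS}=0$.

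The most delicate point is showing $\Sigma(\tilde\eta)n=q(t)\,n$ for some scalar $q\in\Lp^{2}(0,T/2)$. The fluid--solid coupling gives
\[
\Sigma(\tilde\xi_{t_0})n=2\nu\,\varepsilon(\tilde u_{t_0})n-\tilde p_{t_0}\,n,\qquad\tilde u_{t_0}(t):=u(t+t_0)-u(t).
\]
Applying the Stokes elliptic estimate to the globally bounded $\dot u$, $(u\cdot\nabla)u$ and $\Sigma(\xi)n|_{\dOS}$ shows $\|u(t)\|_{\H1^{2}(\OF)}$ is uniformly bounded, and interpolating with the decay $\|u(t)\|_{\H1^{1}(\OF)}\to 0$ gives $u(t)\to 0$ in $\H1^{2-\delta}(\OF)$ for small $\delta>0$. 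Hence the viscous trace $2\nu\,\varepsilon(\tilde u_{t_0^n}(\cdot+t_n))n$ vanishes in $\Lp^{2}(0,T/2;\H1^{-1/2}(\dOS))$. For the pressure I rewrite the momentum equation as $\nabla p=\nu\Delta u-\dot u-(u\cdot\nabla)u$. The integrability~\eqref{eq:est_Linfty} combined with Korn's inequality (using $\dot u=0$ on $\dO$) yields $\int_{t}^{t+T}\|\dot u(s)\|_{\H1^{1}(\OF)}^{2}\,\dx s\to 0$ as $t\to\infty$, while $\Delta u$ and $(u\cdot\nabla)u$ tend to $0$ in $\Lp^{2}(0,T/2;\H1^{-\delta}(\OF))$ over the shifted window. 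Consequently $\nabla\tilde p_{t_0^n}(\cdot+t_n)\to 0$ in a negative-order Sobolev norm, so $\tilde p_{t_0^n}(\cdot+t_n)-c_n(\cdot)\to 0$ in $\Lp^{2}(0,T/2;\H1^{1-\delta}(\OF))$ for suitable scalar functions $c_n\in\Lp^{2}(0,T/2)$. Taking traces on $\dOS$ produces the required form $\Sigma(\tilde\eta)n=q\,n$ in the limit, placing $(\tilde\eta,\Sigma(\tilde\eta)n)\in\omega_{T/2}$ and contradicting the distance bound.

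The hard part I expect is this pressure analysis: $\dot u(t)$ does not decay pointwise in time, only in the time-averaged sense provided by~\eqref{eq:est_Linfty}, so the uniform-in-$t_0$ control of the pressure limit must be extracted entirely from these windowed integrals; care is also needed with the Sobolev exponents to keep the trace on $\dOS$ well-defined throughout the limit procedure.
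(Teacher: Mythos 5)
Your overall strategy (contradiction, compactness, identification of the limit as a solution of \eqref{eq:xi_system_A} with scalar Neumann trace) is the right shape, but two of its load-bearing steps fail.

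First, the compactness input you invoke is not available. You claim that Theorem \ref{thm:globale_existenz} plus elliptic Lam\'e regularity gives a uniform-in-time bound on $\xi$ in $\C^{0}(\H1^{2}(\OS))$. It does not: the global estimates control only $E$ and $K$, i.e.\ $\Vert\varepsilon(\xi)\Vert_{\Lp^2}$, $\Vert\dot\xi\Vert_{\H1^1}$, $\Vert\ddot\xi\Vert_{\Lp^2}$, and to upgrade to $\H1^2(\OS)$ via elliptic regularity you would need the Dirichlet trace $\xi_0\vert_{\dOS}+\int_0^t u(s)\vert_{\dOS}\,\dx s$ in $\H1^{3/2}(\dOS)$ uniformly in $t$, which is exactly what is missing ($u\in\Lp^2(0,\infty;\H1^1)$ does not make $\int_0^t\Vert u\Vert_{\H1^1}\,\dx s$ bounded). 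The paper flags precisely this: no global a priori bound on $\Vert\xi(t)\Vert_2$ is available, and this is why the proof works with $\dot\xi(t_n)$ (bounded in $\H1^1$ by $K$) and recovers $\tilde\xi_{t_0}$ only afterwards by integrating in time. The same circularity infects your claim that $\Vert u(t)\Vert_{\H1^2(\OF)}$ is uniformly bounded: the Stokes elliptic estimate needs $\Sigma(\xi(t))n$ in $\H1^{1/2}(\dOS)$, i.e.\ again $\xi(t)\in\H1^2(\OS)$. Consequently the interpolation decay $u(t)\to0$ in $\H1^{2-\delta}$, on which your treatment of the viscous trace and of $\Delta u$ rests, is unsupported; the paper instead handles the pressure by testing $\nabla\hat p=\Div\sigma(u,\hat p)-\nabla q$ against a Bogovskii field $g$ with $\Div g=\hat f$, which needs only $\varepsilon(u),\dot u,(u\cdot\nabla)u\in\Lp^2$ over shifted windows.

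Second, even with correct a priori bounds, Ascoli--Arzel\`a plus compact embeddings cannot deliver the convergence the theorem asserts. The available bounds place $\tilde\xi_{t_0}(\cdot+t_n)$ in $\C^0(\H1^1)\cap\C^1(\Lp^2)\cap\C^2(\H1^{-1})=U_{T/2}$ itself (not in anything compactly embedded into it), so compactness yields subsequential limits only in strictly weaker topologies such as $\C^0(\Lp^2)\cap\C^1(\H1^{-1})$ --- and says nothing about the Neumann traces in $\Lp^2(\H1^{-1/2}(\dOS))$, which enter the metric $d_{T/2}$. The missing idea is the well-posedness/hidden-regularity machinery for the Dirichlet--Lam\'e system (Theorems \ref{thm:lame_LLT} and \ref{thm:lame_LLT_interior}): one applies compactness only to the data (a single time slice, upgraded by elliptic regularity), builds auxiliary solutions $\varphi^n$ and $\eta^n_{t_0}$ of \eqref{eq:xi_dirichletsys} with those data, and transfers the smallness back to $\tilde\xi_{t_0}(\cdot+t_n)-\eta^n_{t_0}$ in the full $U_{T/2}$-norm and to its Neumann trace by continuous dependence, using that the Dirichlet boundary datum $\int_{\cdot+t_n}^{\cdot+t_0+t_n}u\,\dx r$ tends to zero in $\C^0(\H1^{1/2}(\dOS))\cap\H1^1(\Lp^2(\dOS))$. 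Without this step your argument cannot close the distance in $d_{T/2}$.
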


\begin{proof}
Note that it suffices to prove that for all sequences $t_{n}\to\infty$,
there exists a subsequence $(t_{n_{k}})$ of $(t_{n})$ and a pair
of functions $(\eta,qn):=(\eta,qn)((t_{n_{k}}))\in\omega_{T/2}$ such
that 
\begin{align*}
\lim_{k\to\infty}\sup_{t_{0}\in(0,t_{0}^{*}]}\Vert & (\tilde{\xi}_{t_{0}}(\cdot+t_{n_{k}}),\Sigma(\tilde{\xi}_{t_{0}}(\cdot+t_{n_{k}}))n)-(\eta,qn)\Vert_{U_{T/2}\times\Lp^{2}(\H1^{-1/2}(\dOS))}=0.
\end{align*}
The existence of this subsequence will follow from a compactness argument.
We use the energy estimates 
\begin{align}
\Vert\dot{\xi}(t)\Vert_{\H1^{1}(\OS)}+\Vert\ddot{\xi}(t)\Vert_{\Lp^{2}(\OS)}\leq C(E(0)+K(0))\label{eq:apriorixidot}
\end{align}
for all $t\geq0$, and prove the convergence of a subsequence of $(\dot{\xi}(t_{n}))$
to the set $B_{T}$ which consists of all weak solutions $\eta\in\C^{0}(0,T,\H1^{1}(\OS))\cap\C^{1}(0,T,\Lp^{2}(\OS))$
of the homogeneous Dirichlet problem 
\begin{align}
\begin{cases}
\ddot{\eta}-\Div(\Sigma(\eta))=0 & \text{in }(0,T)\times\OS,\\
\eta=0 & \text{on }(0,T)\times\dOS.
\end{cases}\label{eq:xi_dirichletsys}
\end{align}
 Recall that the target set $A_{T}$ defined in \eqref{eq:defomega}
is a subset of $B_{T}$.

Let $E_{S}:\H1^{1/2}(\dOS)\to\H1^{1}(\OS)$ denote a bounded linear
extension operator and set 
\begin{align*}
\varphi_{0}^{n}:=\dot{\xi}(t_{n})-E_{S}\left(u(t_{n})\vert_{\dOS}\right)\in\H1^{1}(\OS)\hspace{0.2cm}\text{ and }\hspace{0.2cm}\varphi_{1}^{n}:=\ddot{\xi}(t_{n})\in\Lp^{2}(\OS).
\end{align*}
Then Theorem \ref{thm:lame_LLT} implies that system (\ref{eq:xi_dirichletsys})
with compatible inital data 
\begin{align*}
\varphi^{n}(0)=\varphi_{0}^{n},\hspace{0.2cm}\dot{\varphi}^{n}(0)=\varphi_{1}^{n}\hspace{0.2cm}\text{ in }\OS
\end{align*}
admits a unique solution $\varphi^{n}\in B_{T}$. The difference $\psi^{n}=\dot{\xi}(\cdot+t_{n})-\varphi^{n}$
solves 
\begin{align*}
\begin{cases}
\begin{array}{rcll}
\ddot{\psi}-\Div(\Sigma(\psi)) & = & 0 & \text{in }(0,T)\times\OS,\\
\psi & = & u(\cdot+t_{n}) & \text{on }(0,T)\times\dOS,\\
\psi(0) & = & E_{S}\left(u(t_{n})\vert_{\dOS}\right) & \text{in }\OS,\\
\dot{\psi}(0) & = & 0 & \text{in }\OS,
\end{array}\end{cases}
\end{align*}
with 
\[
\Vert\psi^{n}(0)\Vert_{\H1^{1}(\OS)}\leq C\Vert u(t_{n})\Vert_{\H1^{1}(\OF)}\to0\hspace{0.2cm}\text{ for }n\to\infty
\]
and 
\[
\Vert u(\cdot+t_{n})\Vert_{\C^{0}(\H1^{1/2}(\dOS))\cap\H1^{1}(\Lp^{2}(\dOS))}\leq C\left(\Vert u(\cdot+t_{n})\Vert_{\C^{0}(\H1^{1}(\OF))}+\left(\int_{t_{n}}^{T+t_{n}}\Vert\varepsilon(\dot{u})\Vert_{2}^{2}\dx s\right)^{1/2}\right)\overset{n\to\infty}{\to}0
\]
due to \eqref{eq:bound_int_eps_dotu}. Hence Theorem \ref{thm:lame_LLT_interior}
implies
\begin{align}
\psi^{n}\to0\qquad\text{in }\C^{0}(\H1^{1}(\OS))\cap\C^{1}(\Lp^{2}(\OS)).\label{eq:xi_phi_dotxi}
\end{align}
Due to \eqref{eq:apriorixidot}, $(\varphi^{n})$ is bounded in $\C^{0}(\H1^{1}(\OS))\cap\C^{1}(\Lp^{2}(\OS))$.
Moreover, for every $\psi\in\H1_{0}^{1}(\OS)$, 
\[
\langle\ddot{\varphi}^{n},\psi\rangle_{\H1^{-1}(\OS),\H1_{0}^{1}(\OS)}=-\int_{\OS}\Sigma(\varphi^{n}):\nabla\psi\,\dx y\leq C\Vert\varphi^{n}\Vert_{\C^{0}(\H1^{1}(\OS))}\Vert\psi\Vert_{\H1^{1}(\OS)},
\]
so that $(\varphi^{n})$ is also bounded in $\C^{2}(\H1^{-1}(\OS))$
and hence in $U_{T}$. 
The compact embedding 
\[
U_{T}\hookrightarrow^{c}\C^{0}(\Lp^{2}(\OS))\cap\C^{1}(\H1^{-1}(\OS))=:V_{T}
\]
implies the existence of a subsequence $(\varphi^{n_{k}})$ re-denoted
by $(\varphi^{n})$, with limit $\varphi\in V_{T}$ that solves the
homogeneous Dirichlet system (\ref{eq:xi_dirichletsys}) in a weaker
sense. Hence, 
\begin{align}
\lim_{n\to\infty}\Vert\dot{\xi}(\cdot+t_{n})-\varphi\Vert_{V_{T}}=0\label{eq:xi_phi_conv}
\end{align}
follows from \eqref{eq:xi_phi_dotxi} and \cite[Theorem~2.3]{LLT1986}.

Next, we derive uniform convergence in $t_{0}\in(0,t_{0}^{*}]$ of
the sequence $\left(\tilde{\xi}_{t_{0}}(\cdot+t_{n})\right)$ in $U_{T/2}$.
The candidate limit is 
\begin{align*}
\eta_{t_{0}}(s):=\int_{s}^{s+t_{0}}\varphi(r)\,\dx r\in\C^{1}(0,T/2;\Lp^{2}(\OS))\cap\C^{2}(0,T/2;\H1^{-1}(\OS)).
\end{align*}
We also define 
\begin{align*}
\eta_{t_{0}}^{n}(s):=\int_{s}^{s+t_{0}}\varphi^{n}(r)\,\dx r\in\C^{1}(0,T/2;\H1^{1}(\OS))\cap\C^{2}(0,T/2;\Lp^{2}(\OS)).
\end{align*}
We need to improve the spatial regularity of $\eta_{t_{0}}$. Since
$\eta_{t_{0}}(0)=\int_{0}^{t_{0}}\varphi(r)\,\dx r$ solves the elliptic
system 
\begin{align*}
\begin{cases}
\Div(\Sigma(\eta_{t_{0}}(0)))=\dot{\varphi}(t_{0})-\dot{\varphi}(0) & \text{in }\OS,\\
\eta_{t_{0}}(0)=0 & \text{on }\dOS,
\end{cases}
\end{align*}
for all $t_{0}\in(0,t_{0}^{*}]$, elliptic regularity of the Dirichlet-Lamé
operator implies $\eta_{t_{0}}(0)\in\H1^{1}(\OS)$. A similar argument
shows that the convergence of $(\varphi^{n})$ to $\varphi$ in $V_{T/2}$
implies the convergence of $\eta_{t_{0}}^{n}(0)$ to $\eta_{t_{0}}(0)$
in $\H1^{1}(\OS)$ uniformly for all $t_{0}\in(0,t_{0}^{*}]$. The
difference $\eta_{t_{0}}^{n}-\eta_{t_{0}}$ solves the homogeneous
Dirichlet system (\ref{eq:xi_dirichletsys}) up to time $T/2$ with
initial data 
\begin{align*}
\eta_{t_{0}}^{n}(0)-\eta_{t_{0}}(0)\in\H1^{1}(\OS)\hspace{0.2cm}\text{ and }\hspace{0.2cm}\dot{\eta}_{t_{0}}^{n}(0)-\dot{\eta}_{t_{0}}(0)=\varphi^{n}(t_{0})-\varphi(t_{0})+\varphi(0)-\varphi^{n}(0)\in\Lp^{2}(\OF).
\end{align*}
Hence, due to 
\begin{align*}
\Vert\varphi^{n}(t_{0})-\varphi(t_{0})+\varphi(0)-\varphi^{n}(0)\Vert_{\Lp^{2}(\OS)}\leq2\Vert\varphi^{n}-\varphi\Vert_{\C^{0}(\Lp^{2}(\OS))}\to0\hspace{0.2cm}\text{ for }n\to\infty,
\end{align*}
and Theorem \ref{thm:lame_LLT}, 
\begin{align}
\lim_{n\to\infty}\sup_{t_{0}\in(0,t_{0}^{*}]}\left\Vert \eta_{t_{0}}^{n}-\eta_{t_{0}}\right\Vert _{U_{T/2}}=0\label{eq:xi_conv_etan_eta}
\end{align}
and 
\begin{align}
\lim_{n\to\infty}\sup_{t_{0}\in(0,t_{0}^{*}]}\left\Vert \Sigma(\eta_{t_{0}}^{n})n-\Sigma(\eta_{t_{0}})n\right\Vert _{\Lp^{2}((0,T)\times\dOS)}=0.\label{eq:xi_conv_etan_eta_neumann}
\end{align}
For any $n\in\mathbb{N}$ and $t_{0}\in(0,t_{0}^{*}]$, the difference
$\psi_{t_{0}}^{n}:=\tilde{\xi}_{t_{0}}(\cdot+t_{n})-\eta_{t_{0}}^{n}$
solves 
\begin{align*}
\begin{cases}
\begin{array}{rcll}
\ddot{\psi}_{t_{0}}^{n}-\Div(\Sigma(\psi_{t_{0}}^{n})) & = & 0 & \text{in }(0,T/2)\times\OS,\\
\psi_{t_{0}}^{n} & = & \int_{\cdot+t_{n}}^{\cdot+t_{0}+t_{n}}u(r)\dx r & \text{on }(0,T/2)\times\dOS,\\
\psi_{t_{0}}^{n}(0) & = & \xi(t_{0}+t_{n})-\xi(t_{n})-\eta^{n}(0) & \text{in }\OS,\\
\dot{\psi_{t_{0}}}^{n}(0) & = & \psi^{n}(t_{0})-\psi^{n}(0) & \text{in }\OS,
\end{array}\end{cases}
\end{align*}
where 
\begin{equation}
\left\Vert \int_{\cdot+t_{n}}^{\cdot+t_{0}+t_{n}}u(r)\dx r\right\Vert _{\C^{0}(0,T/2;\H1^{1/2}(\dOS))\cap\H1^{1}(0,T/2;\Lp^{2}(\dOS))}\to0\qquad\text{for }n\to\infty,\label{eq:int_of_u}
\end{equation}
and clearly
\[
\Vert\dot{\psi}_{t_{0}}^{n}(0)\Vert_{\Lp^{2}(\OS)}\leq2\Vert\psi^{n}\Vert_{\C^{0}(\H1^{1}(\OS))\cap\C^{1}(\Lp^{2}(\OS))}\to0\hspace{0.2cm}\text{ for }n\to\infty.
\]
Due to
\begin{align*}
\begin{cases}
\begin{array}{rcll}
\Div(\Sigma(\psi_{t_{0}}^{n}(0))) & = & \dot{\psi}^{n}(t_{0})-\dot{\psi}^{n}(0) & \text{in }\OS,\\
\psi_{t_{0}}^{n}(0) & = & \int_{t_{n}}^{t_{0}+t_{n}}u(r)\dx r & \text{on }\dOS,
\end{array}\end{cases}
\end{align*}
\eqref{eq:int_of_u} and \eqref{eq:xi_phi_conv} show that 
\[
\lim_{n\to\infty}\Vert\psi_{t_{0}}^{n}(0)\Vert_{\H1^{1}(\OS)}=0.
\]
Hence Theorem \ref{thm:lame_LLT_interior} implies 
\begin{align}
\lim_{n\to\infty}\sup_{t_{0}\in(0,t_{0}^{*}]}\left\Vert \psi_{t_{0}}^{n}\right\Vert _{U_{T/2}}=0.\label{eq:xi_conv_xi_etan}
\end{align}
At the boundary, for every $h\in\H1^{1/2}(\dOS)$, we obtain 
\begin{align*}
\int_{\dOS}\left(\Sigma(\psi_{t_{0}}^{n})n\right)\cdot h\,\dx S(y) & =\,\int_{\OS}\left(\Sigma(\psi_{t_{0}}^{n})\right):\nabla(E_{S}h)\,\dx y\\
 & +\int_{\OS}\left(\ddot{\tilde{\xi}}_{t_{0}}(\cdot+t_{n})-\dot{\varphi}^{n}(\cdot+t_{0})+\dot{\varphi}^{n}(\cdot)\right)\cdot E_{S}h\,\dx y\\
 & \leq C\left(\left\Vert \psi_{t_{0}}^{n}\right\Vert _{\C^{0}(\H1^{1}(\OS))}+\Vert\psi^{n}\Vert_{\C^{1}(\Lp^{2}(\OS))}\right)\Vert E_{S}h\Vert_{\H1^{1}(\OS)}.
\end{align*}
Thus, $\lim_{n\to\infty}\sup_{t_{0}\in(0,t_{0}^{*}]}\left\Vert \Sigma(\psi_{t_{0}}^{n})n\right\Vert _{\C^{0}(\H1^{-1/2}(\dOS))}=0$
follows from (\ref{eq:xi_conv_xi_etan}) and (\ref{eq:xi_phi_dotxi}).
We conclude from (\ref{eq:xi_conv_etan_eta}) and (\ref{eq:xi_conv_etan_eta_neumann})
that 
\begin{align}
\lim_{n\to\infty}\sup_{t_{0}\in(0,t_{0}^{*}]}\left\Vert \tilde{\xi}_{t_{0}}(\cdot+t_{n})-\eta_{t_{0}}\right\Vert _{U_{T/2}}=0\label{eq:xi_conv_eta}
\end{align}
and that 
\begin{align}
\lim_{n\to\infty}\sup_{t_{0}\in(0,t_{0}^{*}]}\left\Vert \Sigma(\tilde{\xi}_{t_{0}}(\cdot+t_{n}))n-\Sigma(\eta_{t_{0}})n\right\Vert _{\Lp^{2}(\H1^{-1/2}(\dOS))}=0.\label{eq:xi_conv_eta_neumann}
\end{align}
It remains to show that $\eta_{t_{0}}\in A_{T/2}$,$\ $i.e.$\ $that
$\Sigma(\eta_{t_{0}})n=\tilde{q}n$ for some scalar function $\tilde{q}.$
By Corollary \ref{cor:convofpressure},
\begin{align}
\lim_{t\to\infty}\Vert\Sigma(\xi(\cdot+t)n-q(\cdot+t)n\Vert_{\Lp^{2}(\H1^{-1/2}(\dOS))}=0.\label{eq:xi_conv_Sigma_q}
\end{align}
Combined with (\ref{eq:xi_conv_eta_neumann}), this implies that 
\begin{align*}
\lim_{n\to\infty}\sup_{t_{0}\in(0,t_{0}^{*}]}\left\Vert \tilde{q}_{t_{0}}(\cdot+t_{n})n-\Sigma(\eta_{t_{0}})n\right\Vert _{\Lp^{2}(\H1^{-1/2}(\dOS))}=0.
\end{align*}
Together with (\ref{eq:xi_conv_eta}) and (\ref{eq:xi_conv_eta_neumann}),
this concludes the proof of (\ref{eq:xi_thm_approx_A}). 
\end{proof}

\section{Proof of the Main Result}

\label{SecProof}

The proof of the main result Theorem \ref{thm:main result xi} is
divided into six steps. \\
 \textbf{Step 1: Convergence in rate on good domains}
\begin{cor}
\label{cor:xi_derivatives_gooddomain} Let $\OS$ be a \emph{good}
domain. Then 
\begin{align*}
\lim_{t\to\infty}\Vert\dot{\xi}(t)\Vert_{\Lp^{2}(\OS)}=0\hspace{0.5cm}\text{ and }\hspace{0.5cm}\lim_{t\to\infty}\Vert\ddot{\xi}(t)\Vert_{\H1^{-1}(\OS)}=0.
\end{align*}
Moreover, 
\begin{align}
\lim_{t\to\infty}\Vert\ddot{\xi}(t+\cdot)\Vert_{\Lp^{2}(0,T/2;\H1^{1}(\OS)^{*})}=0.\label{eq:xi_ddotxi_L2H1*}
\end{align}
\end{cor}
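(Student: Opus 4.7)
The plan is to combine Theorem~\ref{thm:xi_approx_A} with the characterization $\omega_{T/2}=\{(0,0)\}$ provided by Theorem~\ref{thm:xi_omega}(a) to extract uniform Cauchy-type properties for $\dot{\xi}$ and $\ddot{\xi}$ along the time axis, promote these to pointwise decay via Arzela--Ascoli plus identification of the subsequential limit, and finally deduce \eqref{eq:xi_ddotxi_L2H1*} from pointwise $\H1^{1}(\OS)^{*}$-decay by dominated convergence. Concretely, the two theorems together imply
\[
\sup_{t_{0}\in(0,t_{0}^{*}]}\Vert\tilde{\xi}_{t_{0}}(\cdot+t)\Vert_{U_{T/2}}\to 0\quad\text{as }t\to\infty.
\]
Evaluating the $\C^{0}(\H1^{1})$, $\C^{1}(\Lp^{2})$ and $\C^{2}(\H1^{-1})$ factors at $s=0$ yields that $\Vert\xi(t+t_{0})-\xi(t)\Vert_{\H1^{1}(\OS)}$, $\Vert\dot{\xi}(t+t_{0})-\dot{\xi}(t)\Vert_{\Lp^{2}(\OS)}$ and $\Vert\ddot{\xi}(t+t_{0})-\ddot{\xi}(t)\Vert_{\H1^{-1}(\OS)}$ all tend to zero uniformly in $t_{0}\in(0,t_{0}^{*}]$ as $t\to\infty$.

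For $\dot{\xi}(t)\to 0$ in $\Lp^{2}(\OS)$, I would apply Arzela--Ascoli to the family $\{\dot{\xi}(t+\cdot):t\geq 0\}\subset\C([0,T/2];\Lp^{2}(\OS))$: the bound $K(t)\leq K(0)$ together with \eqref{eq:Korn_xi_step3_2} gives a uniform $\H1^{1}(\OS)$-bound on $\dot{\xi}$, $\Vert\ddot{\xi}\Vert_{\Lp^{\infty}(\Lp^{2})}\leq K(0)^{1/2}$ gives Lipschitz continuity in $\Lp^{2}$, and $\H1^{1}\hookrightarrow\Lp^{2}$ is compact, so the family is precompact. Along any sequence $t_{n}\to\infty$, I extract a subsequence with $\dot{\xi}(t_{n_{k}}+\cdot)\to\eta$ in $\C([0,T/2];\Lp^{2}(\OS))$; the uniform Cauchy property above forces $\eta$ to be independent of $s$, and the telescoping identity
\[
\int_{0}^{t_{0}}\dot{\xi}(t_{n_{k}}+s)\,\dx s=\xi(t_{n_{k}}+t_{0})-\xi(t_{n_{k}})
\]
combined with the $\H1^{1}$-Cauchy property for $\xi$ identifies that constant as $0$. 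Uniqueness of subsequential limits delivers $\dot{\xi}(t)\to 0$ in $\Lp^{2}(\OS)$. The same scheme, now in $\C([0,T/2];\H1^{-1}(\OS))$, produces $\ddot{\xi}(t)\to 0$ in $\H1^{-1}(\OS)$: $\ddot{\xi}$ is uniformly bounded in $\Lp^{2}(\OS)\hookrightarrow\H1^{-1}(\OS)$ (compact); differentiating the identity $\ddot{\xi}=\Div\Sigma(\xi)$ in time gives $\Vert\dddot{\xi}(t)\Vert_{\H1^{-1}}\leq C\Vert\dot{\xi}(t)\Vert_{\H1^{1}}\leq C$, hence Lipschitz continuity in $\H1^{-1}$; and the subsequential constant is now fixed as $0$ via $\int_{0}^{t_{0}}\ddot{\xi}(t_{n}+s)\,\dx s=\dot{\xi}(t_{n}+t_{0})-\dot{\xi}(t_{n})\to 0$ in $\Lp^{2}\hookrightarrow\H1^{-1}$, using the decay of $\dot{\xi}$ just obtained.

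For \eqref{eq:xi_ddotxi_L2H1*}: the uniform $\Lp^{2}(\OS)$-bound on $\ddot{\xi}(t)$ combined with the strong $\H1^{-1}(\OS)$-decay gives, by reflexivity and uniqueness of weak/strong limits, $\ddot{\xi}(t)\rightharpoonup 0$ weakly in $\Lp^{2}(\OS)$. Since $\Lp^{2}(\OS)\hookrightarrow\H1^{1}(\OS)^{*}$ is compact (the dual of Rellich's theorem), this weak convergence upgrades to strong convergence $\Vert\ddot{\xi}(t)\Vert_{\H1^{1}(\OS)^{*}}\to 0$ pointwise in $t$. Together with the dominating bound $\Vert\ddot{\xi}(s+t)\Vert_{\H1^{1}(\OS)^{*}}\leq C\Vert\ddot{\xi}\Vert_{\Lp^{\infty}(\Lp^{2})}$ on $[0,T/2]$, the dominated convergence theorem delivers \eqref{eq:xi_ddotxi_L2H1*}. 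I expect the main delicate point to be the identification of the constant subsequential limits in the two Arzela--Ascoli arguments; once the correct telescoping identities are in place and coupled with the already-available $\H1^{1}$-Cauchy property of $\xi$ and $\Lp^{2}$-decay of $\dot{\xi}$, the remaining compactness-plus-DCT step is routine.
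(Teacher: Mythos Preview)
Your proof is correct, but the route differs from the paper's in an instructive way.  For the first two limits, the paper avoids compactness entirely by writing the elementary identity
\[
\dot{\xi}(t)=\frac{\xi(t+t_{0}^{*})-\xi(t)}{t_{0}^{*}}-\frac{1}{t_{0}^{*}}\int_{0}^{t_{0}^{*}}\bigl(\dot{\xi}(t+s)-\dot{\xi}(t)\bigr)\,\dx s,
\]
and observing that both right-hand terms tend to $0$ in $\Lp^{2}(\OS)$: the first by the $\C^{0}(\H1^{1})$-part of $\Vert\tilde{\xi}_{t_{0}^{*}}(\cdot+t)\Vert_{U_{T/2}}\to 0$, the second by the \emph{uniform-in-$t_{0}$} $\C^{1}(\Lp^{2})$-part.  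The same identity one order higher gives $\ddot{\xi}(t)\to 0$ in $\H1^{-1}$.  Your Arzel\`a--Ascoli argument reaches the same conclusion but is heavier: you need uniform $\H1^{1}$-bounds, equicontinuity via $\dddot{\xi}$, a subsequence extraction, identification of the limit as constant, and a telescoping step to pin the constant to zero.  The paper's identity short-circuits all of this because the uniformity in $t_{0}$ already encodes that the difference quotient \emph{is} the derivative up to a vanishing error.

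For \eqref{eq:xi_ddotxi_L2H1*} the paper merely says ``analogously'', which is somewhat elliptic since $U_{T/2}$ controls $\ddot{\tilde{\xi}}_{t_{0}}$ only in $\H1^{-1}(\OS)$, not in $\H1^{1}(\OS)^{*}$.  Your argument here (bounded in $\Lp^{2}$ plus strong $\H1^{-1}$-decay $\Rightarrow$ weak $\Lp^{2}$-decay $\Rightarrow$ strong $(\H1^{1})^{*}$-decay by Schauder, then DCT) is a clean and self-contained way to close this gap, and is arguably more transparent than what the paper writes.
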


\begin{proof}
We start by proving the convergence of $\dot{\xi}$. Since $\omega_{T}=\{(0,0)\}$
if $\OS$ is a good domain, Theorem \ref{thm:xi_approx_A} shows that
\begin{align}
\lim_{t\to\infty}\sup_{t_{0}\in(0,t_{0}^{*}]}\Vert\xi(\cdot+t_{0}+t)-\xi(\cdot+t)\Vert_{U_{T/2}}=0\label{eq:xi_conv_difference_xi}
\end{align}
and 
\[
\lim_{t\to\infty}\sup_{t_{0}\in(0,t_{0}^{*}]}\Vert\Sigma(\xi(\cdot+t_{0}+t)-\xi(\cdot+t))n\Vert_{\Lp^{2}(\H1^{1/2}(\dOS))}=0.
\]
Here, the uniformity for small $t_{0}$ is essential. By the fundamental
theorem of calculus, 
\begin{align*}
\left\Vert \frac{\xi(t+t_{0}^{*})-\xi(t)}{t_{0}^{*}}-\dot{\xi}(t)\right\Vert _{\Lp^{2}(\OS)} & =\left\Vert \frac{1}{t_{0}^{*}}\int_{0}^{t_{0}^{*}}\dot{\xi}(t+s)-\dot{\xi}(t)\,\dx s\right\Vert _{\Lp^{2}(\OS)}\\
 & \leq\sup_{t_{0}\in(0,t_{0}^{*}]}\Vert\tilde{\xi}_{t_{0}}(\cdot+t)\Vert_{\C^{1}(\Lp^{2}(\OS))}\to0\hspace{0.2cm}\text{ as }t\to\infty.
\end{align*}
In combination with 
\begin{align*}
\left\Vert \xi(t+t_{0}^{*})-\xi(t)\right\Vert _{\Lp^{2}(\OS)}\to0\;\text{ for }t\to\infty
\end{align*}
due to \eqref{eq:xi_conv_difference_xi}, we obtain $\lim_{t\to\infty}\Vert\dot{\xi}(t)\Vert_{\Lp^{2}(\OS)}=0$.
The convergence $\lim_{t\to\infty}\Vert\ddot{\xi}(t)\Vert_{\H1^{-1}(\OS)}=0$
and (\ref{eq:xi_ddotxi_L2H1*}) follow analogously from \eqref{eq:xi_conv_difference_xi}. 
\end{proof}
\textbf{Step 2: Construction of a limit for $\tilde{\xi}_{t_{0}}$
and convergence in energy norm. }

Given $(u,p,\xi)$ a global solution to \eqref{eq:nonlin_system}
corresponding to compatible initial data $(u_{0},\xi_{0},\xi_{1})$,
for every sequence $t_{n}\to\infty$, define the time-shifted differences
by 
\begin{align*}
\tilde{u}_{t_{0},n}(t) & :=u(t+t_{0}+t_{n})-u(t+t_{n})=\tilde{u}_{t_{0}}(t+t_{n}),\\
\tilde{p}_{t_{0},n}(t) & :=p(t+t_{0}+t_{n})-p(t+t_{n})=\tilde{p}_{t_{0}}(t+t_{n}),\\
\tilde{\xi}_{t_{0},n}(t) & :=\xi(t+t_{0}+t_{n})-\xi(t+t_{n})=\tilde{\xi}_{t_{0}}(t+t_{n}).
\end{align*}
Let now $(t_{n})$ be the subsequence of $(t_{n})$ constructed in
the proof of Theorem \ref{thm:xi_approx_A} and let $\varphi\in B_{T}$
be such that (\ref{eq:xi_phi_conv}) holds. Moreover, choose $t_{0}\in(0,T/2)\setminus\{2\pi/\sqrt{\mu_{i}}:i\in I\}$
and let $(\eta_{t_{0}},q_{t_{0}})\in\omega_{T/2}$ be the pair such
that (\ref{eq:xi_conv_eta}) and (\ref{eq:xi_conv_eta_neumann}) hold.
Recall that
\begin{align}
\eta_{t_{0}}(t)=\int_{t}^{t+t_{0}}\varphi(s)\,\dx s.\label{eq:xi_def_eta_phi}
\end{align}
Due to the structure of $\omega=\omega_{T}$, we can extend its elements
and hence $(\eta_{t_{0}},q_{t_{0}})$ globally in time. Then we can
associate the energy 
\[
E_{t_{0},n}(t)=\Vert\tilde{u}_{t_{0},n}(t)\Vert_{\Lp^{2}(\OF)}^{2}+\Vert\dot{\tilde{\xi}}_{t_{0},n}(t)-\dot{\eta}_{t_{0}}(t)\Vert_{\Lp^{2}(\OS)}^{2}+\int_{\OS}\Sigma(\tilde{\xi}_{t_{0},n}(t)-\eta_{t_{0}}(t)):\varepsilon(\tilde{\xi}_{t_{0},n}(t)-\eta_{t_{0}}(t))\,\dx y
\]
to the triple $(\tilde{u}_{t_{0},n},\tilde{p}_{t_{0},n}-q_{t_{0}},\tilde{\xi}_{t_{0},n}-\eta_{t_{0}}).$
Now let $\varepsilon>0$. Because of (\ref{eq:xi_conv_eta}) and $\Vert u(t)\Vert_{\H1^{1}(\OF)}\to0$
for $t\to\infty$, we find some $n_{1}\in\mathbb{N}$ such that $E_{t_{0},n}(0)<\frac{\varepsilon}{2}$
for all $n\geq n_{1}$ and all $t_{0}\in(0,t_{0}^{*}]$. Since 
\begin{align*}
 & E_{t_{0},n}(t)+\int_{0}^{t}\Vert\varepsilon(\tilde{u}_{t_{0},n})\Vert_{\Lp^{2}(\OF)}^{2}\,\dx s\\
=\, & E_{t_{0},n}(0)-2\int_{0}^{t}\int_{\OF}((\tilde{u}_{t_{0},n}(s)\cdot\nabla)u(s+t_{0}+t_{n_{k}})+(u(s+t_{n})\cdot\nabla)\tilde{u}_{t_{0},n}(s))\cdot\tilde{u}_{t_{0},n}(s)\,\dx y\dx s\\
\leq\, & E_{t_{0},n}(0)+2C\int_{0}^{t}\left(\Vert u(s+t_{0}+t_{n})\Vert_{\H1^{1}(\OS)}+\Vert u(s+t_{n}\right)\Vert_{\H1^{1}(\OS)})\Vert\varepsilon(\tilde{u}_{t_{0},n})(s)\Vert_{\Lp^{2}(\OS)}^{2}\,\dx s,
\end{align*}
we can use $\Vert u(t)\Vert_{\H1^{1}(\OF)}\to0$ and follow the proof
of \eqref{eq:bound_E} in Theorem \ref{thm:globale_existenz} to find
some $n_{2}\geq n_{1}$ such that 
\begin{align}
E_{t_{0},n}(t)\leq E_{t_{0},n}(0)<\frac{\varepsilon}{2}\hspace{0.2cm}\text{ for all }t\geq0\label{eq:xi_bound_Ek}
\end{align}
and all $n\geq n_{2}$. Now for any $n\geq n_{2}$, Korn's inequality
implies that 
\begin{align}
 & \Vert\eta_{t_{0}}(t+t_{n+1}-t_{n})-\eta_{t_{0}}(t)\Vert_{\H1^{1}(\OS)}\nonumber \\
 & \leq C\Vert\Sigma(\eta_{t_{0}}(t+t_{n+1}-t_{n})-\eta_{t_{0}}(t))\Vert_{\Lp^{2}(\OS)}\nonumber \\
 & \leq C\big(\Vert\Sigma(\eta_{t_{0}}(t+t_{n+1}-t_{n})-\tilde{\xi}_{t_{0},n+1}(t))\Vert_{\Lp^{2}(\OS)}+\Vert\Sigma(\tilde{\xi}_{t_{0},n+1}-\eta_{t_{0}})(t))\Vert_{\Lp^{2}(\OS)}\big)\nonumber \\
 & =C\big(\Vert\Sigma(\eta_{t_{0}}-\tilde{\xi}_{t_{0},n})(t+t_{n+1}-t_{n}))\Vert_{\Lp^{2}(\OS)}+\Vert\Sigma(\tilde{\xi}_{t_{0},n+1}-\eta_{t_{0}})(t))\Vert_{\Lp^{2}(\OS)}\big)<C\varepsilon\label{eq:xi_period_eta}
\end{align}
for all $t\geq0$. At the same time, the coefficients 
\begin{align}
\eta_{t_{0},i}(t):=\int_{\OS}\eta_{t_{0}}(t)\cdot\psi_{i}\,\dx y=a_{t_{0},i}\sin(\sqrt{\mu_{i}}t)+b_{t_{0},i}\cos(\sqrt{\mu_{i}}t)\label{eq:etat0i}
\end{align}
of $\eta_{t_{0}}$ are either constantly zero or $P_{i}$-periodic
for $P_{i}:=\frac{2\pi}{\sqrt{\mu_{i}}}$ and $i\in I$. We denote
by $I_{\eta_{t_{0}}}$ the set of all $i\in I$ where $\eta_{t_{0},i}$
does not vanish and set $I_{\eta}:=\cup_{t_{0}\in(0,t_{0}^{*}]}I_{\eta_{t_{0}}}$.
If $I_{\eta}=\emptyset$, then $\eta_{t_{0}}=0$ and $q_{t_{0}}=0$
for all $t_{0}\in(0,t_{0}^{*}]$ and we can skip to Step 5 in the
proof, with $\eta^{*}=0$. For the moment, we assume that $I_{\eta}\neq\emptyset$.
The smallness in (\ref{eq:xi_period_eta}) implies that 
\begin{align*}
 & \lim_{n\to\infty}\eta_{t_{0},i}(t+(t_{n+1}-t_{n})\,\text{mod}\,P_{i}))-\eta_{t_{0},i}(t)=\lim_{n\to\infty}\eta_{t_{0},i}(t+(t_{n+1}-t_{n}))-\eta_{t_{0},i}(t)=0
\end{align*}
for all $t\geq0$. Using the decomposition into sine and cosine functions
of period $\leq P_{i}$ in (\ref{eq:etat0i}) and arguing by contradiction,
we conclude that$\lim_{n\to\infty}(t_{n+1}-t_{n})\,\text{mod}\,P_{i}=0$for
all $i\in I_{\eta}$ and hence $\lim_{n\to\infty}t_{n}\,\text{mod}\,P_{i}=t^{i}$for
some $t^{i}\in[0,P_{i})$. We thus define 
\begin{align*}
\hat{\eta}_{t_{0}}(t,y):=\sum_{i\in I_{\eta}}\eta_{t_{0},i}(t-t^{i})\psi_{i}(y).
\end{align*}
Applying the angle sum identities and the characterization in \eqref{eq:charomega},
we see that $\hat{\eta}_{t_{0}}\in A$. It remains to show that $\tilde{\xi}_{t_{0}}\to\hat{\eta}_{t_{0}}$
in a suitable sense. By definition, for every $t\geq0$, 
\[
\eta_{t_{0}}(t-t_{n_{k}}),\dot{\eta}_{t_{0}}(t-t_{n_{k}})\overset{k\to\infty}{\to}\hat{\eta}_{t_{0}}(t),\dot{\hat{\eta}}_{t_{0}}(t)\in\Lp^{2}(\OS).
\]
Let therefore $n_{2}\geq n_{1}$ be such that for all $t\geq0$ and
all $n\geq n_{2}$, 
\[
\Vert\dot{\eta}_{t_{0}}(t-t_{n})-\dot{\hat{\eta}}_{t_{0}}(t)\Vert_{\Lp^{2}(\OS)}^{2}+\int_{\OS}\Sigma(\eta_{t_{0}}(t-t_{n})-\hat{\eta}_{t_{0}}(t)):\varepsilon(\eta_{t_{0}}(t-t_{n})-\hat{\eta}_{t_{0}}(t))\,\dx y<\frac{\varepsilon}{2}.
\]
By shifting the functions appearing in (\ref{eq:xi_bound_Ek}) by
$t_{n}$, we obtain that for all $t\geq t_{n_{2}}$,
\begin{align}
 & \Vert(\dot{\tilde{\xi}}_{t_{0}}-\dot{\hat{\eta}}_{t_{0}})(t)\Vert_{\Lp^{2}(\OS)}^{2}+\int_{\OS}\Sigma(\tilde{\xi}_{t_{0}}-\hat{\eta}_{t_{0}}):\varepsilon(\tilde{\xi}_{t_{0}}-\hat{\eta}_{t_{0}})(t)\,\dx y\nonumber \\
 & \leq\Vert\dot{\tilde{\xi}}_{t_{0}}(t)-\dot{\eta}_{t_{0}}(t-t_{n_{2}})\Vert_{\Lp^{2}(\OS)}^{2}+\Vert\dot{\eta}_{t_{0}}(t-t_{n_{2}})-\dot{\hat{\eta}}_{t_{0}}(t)\Vert_{\Lp^{2}(\OS)}^{2}\nonumber \\
 & +2\int_{\OS}\Sigma(\tilde{\xi}_{t_{0}}(t)-\eta_{t_{0}}(t-t_{n_{2}})):\varepsilon(\tilde{\xi}_{t_{0}}(t)-\eta_{t_{0}}(t-t_{n_{2}}))\,\dx y\nonumber \\
 & +2\int_{\OS}\Sigma(\eta_{t_{0}}(t-t_{n_{2}})-\hat{\eta}_{t_{0}}(t)):\varepsilon(\eta_{t_{0}}(t-t_{n_{2}})-\hat{\eta}_{t_{0}}(t))\,\dx y<2\varepsilon.\label{eq:xi_tnk_eta}
\end{align}
\textbf{Step 3: Convergence of $\tilde{\xi}_{t_{0}}$ in $U_{T}$. }

Here, we improve the convergence in energy in \eqref{eq:xi_tnk_eta}
to stronger norms. By Theorem \ref{thm:xi_approx_A}, for $t\geq0$
sufficiently large, there exists a pair $(\eta_{t_{0}}^{t},q_{t_{0}}^{t})\in\omega_{T}$
such that 
\[
\Vert\tilde{\xi}_{t_{0}}(\cdot+t)-\eta_{t_{0}}^{t}\Vert_{U_{T/2}}^{2}<\varepsilon
\]
and 
\begin{align}
\Vert\Sigma(\tilde{\xi}_{t_{0}}(\cdot+t)-\eta_{t_{0}}^{t})n\Vert_{\Lp^{2}(\H1^{-1/2}(\dOS))}^{2}<\varepsilon\label{eq:xi_dist_etat_sigma}
\end{align}
for all $t_{0}\in(0,t_{0}^{*}]$. Using $\eta_{t_{0}}^{t}-\hat{\eta}_{t_{0}}\vert_{\dOS}=0$,
Korn's inequality and (\ref{eq:xi_tnk_eta}), we obtain 
\[
\Vert(\eta_{t_{0}}^{t}-\hat{\eta}_{t_{0}})(t)\Vert_{\H1^{1}(\OS)}^{2}\leq C\bigg(\int_{\OS}\Sigma(\eta_{t_{0}}^{t}-\tilde{\xi}_{t_{0}}):\varepsilon(\eta_{t_{0}}^{t}-\tilde{\xi}_{t_{0}})\,\dx y+\int_{\OS}\Sigma(\tilde{\xi}_{t_{0}}-\hat{\eta}_{t_{0}}):\varepsilon(\tilde{\xi}_{t_{0}}-\hat{\eta}_{t_{0}})\,\dx y\bigg)(t)<C\varepsilon.
\]
Hence 
\[
\Vert(\tilde{\xi}_{t_{0}}-\hat{\eta}_{t_{0}})(t)\Vert_{\H1^{1}(\OS)}^{2}\leq\Vert(\tilde{\xi}_{t_{0}}-\eta_{t_{0}}^{t})(t)\Vert_{\H1^{1}(\OS)}^{2}+\Vert(\eta_{t_{0}}^{t}-\hat{\eta}_{t_{0}})(t)\Vert_{\H1^{1}(\OS)}^{2}<C\varepsilon.
\]
Together with (\ref{eq:xi_tnk_eta}), we conclude that 
\begin{align*}
\lim_{t\to\infty}\Vert(\tilde{\xi}_{t_{0}}-\hat{\eta}_{t_{0}})(\cdot+t)\Vert_{\C^{0}(\H1^{1}(\OS))\cap\C^{1}(\Lp^{2}(\OS))}=0.
\end{align*}
Since $\xi(t+t_{0})-\xi(t)-\hat{\eta}_{t_{0}}(t)$ solves a weak Dirichlet
problem, we obtain for all $f\in\H1_{0}^{1}(\OS)$ that 
\begin{align*}
\langle\ddot{\xi}(t+t_{0})-\ddot{\xi}(t)-\ddot{\hat{\eta}}_{t_{0}}(t),f\rangle_{\H1^{-1}(\OS),\H1_{0}^{1}(\OS)} & =-\int_{\OS}\Sigma(\xi(t+t_{0})-\xi(t)-\hat{\eta}_{t_{0}}(t)):\varepsilon(f)\,\dx y\\
 & \leq\Vert\xi(t+t_{0})-\xi(t)-\hat{\eta}_{t_{0}}(t)\Vert_{\H1^{1}(\OS)}\Vert f\Vert_{\H1^{1}(\OS)}\overset{t\to\infty}{\to}0.
\end{align*}
Hence even 
\begin{align}
\lim_{t\to\infty}\Vert\tilde{\xi}_{t_{0}}(\cdot+t)-\hat{\eta}_{t_{0}}(\cdot+t)\Vert_{\C^{0}(\H1^{1}(\OS))\cap\C^{1}(\Lp^{2}(\OS))\cap\C^{2}(\H1^{-1}(\OS))}=0.\label{eq:limit_etahat}
\end{align}
Moreover, the hidden regularity result in Theorem \ref{thm:lame_LLT}
applied to $\eta_{t_{0}}^{t}-\hat{\eta}_{t_{0}}(\cdot+t)$ implies
that 
\begin{align*}
\Vert\Sigma(\eta_{t_{0}}^{t}-\hat{\eta}_{t_{0}}(\cdot+t))n\Vert_{\Lp^{2}((0,T/2)\times\dOS)}\leq C\Vert\eta_{t_{0}}^{t}-\hat{\eta}_{t_{0}}(\cdot+t)\Vert_{\C^{0}(\H1^{1}(\OS))\cap\C^{1}(\Lp^{2}(\OS))}<C\varepsilon.
\end{align*}

Together with (\ref{eq:xi_dist_etat_sigma}), we get
\begin{align}
\lim_{t\to\infty}\Vert\Sigma(\tilde{\xi}_{t_{0}}(\cdot+t)-\hat{\eta}_{t_{0}}(\cdot+t))n\Vert_{\Lp^{2}(0,T/2;\H1^{-1/2}(\OS))}=0.\label{eq:limit_etahat_Sigma}
\end{align}
Then using the weak Neumann formulation yields
\begin{align}
\lim_{t\to\infty}\Vert\ddot{\tilde{\xi}}_{t_{0}}(\cdot+t)-\ddot{\hat{\eta}}_{t_{0}}(\cdot+t)\Vert_{\Lp^{2}(0,T/2;\H1^{1}(\OS)^{*})}=0.\label{eq:limit_etahat_H1*}
\end{align}
\textbf{Step 4: Construction of a limit for $\xi$.} \\
 Now we want to get rid of the difference $\tilde{\xi}_{t_{0}}$ and
study $\xi$ instead. For this, first recall (\ref{eq:xi_def_eta_phi})
and note that therefore
\begin{align*}
\varphi(t,y)=\sum_{i\in I_{\eta}}\varphi_{i}(t)\psi_{i}(y)
\end{align*}
with coefficient functions $\varphi_{i}\in\C^{0}(\mathbb{R})$. This
is due to the fact that we can choose any $(0,T/2)\ni t_{0}\neq P_{i}$
for all $i\in\mathbb{N}$ and obtain 
\begin{align*}
\varphi(t+t_{0})-\varphi(t)=\dot{\eta}_{t_{0}}(t)\perp\psi_{i}\hspace{0.2cm}\text{ for all }i\notin I_{\eta},
\end{align*}
so that also $\varphi\perp\psi_{i}$ for all $i\notin I_{\eta}$.
Next, we show that $\varphi_{i}$ is of the form 
\begin{align}
\varphi_{i}(t)=\tilde{a}_{i}\sin(\sqrt{\mu_{i}}t)+\tilde{b}_{i}\cos(\sqrt{\mu_{i}}t).\label{eq:xi_structure_phii}
\end{align}
Since $\eta_{t_{0},i}$ is $P_{i}$-periodic, 
\begin{align*}
0=\eta_{t_{0},i}(t+P_{i})-\eta_{t_{0},i}(t)=\int_{t}^{t+t_{0}}\varphi_{i}(s+P_{i})-\varphi_{i}(s)\,\dx s
\end{align*}
holds for all $t_{0}\in(0,T/2)$. Since $\varphi_{i}$ is continuous,
it must thus also be $P_{i}$-periodic. Hence we can consider its
Fourier series decomposition 
\begin{align*}
\varphi_{i}(t)=\sum_{k\in\mathbb{N}}\tilde{a}_{i}^{k}\sin(k\sqrt{\mu_{i}}t)+\tilde{b}_{i}^{k}\cos(k\sqrt{\mu_{i}}t).
\end{align*}
Due to $\varphi_{i}(t+t_{0})-\varphi_{i}(t)=\dot{\eta}_{t_{0},i}(t)$,
$\tilde{a}_{i}^{k}=\tilde{b}_{i}^{k}=0$ for $k\geq2$. Moreover,
$\tilde{b}_{i}^{0}=0$ follows from 
\begin{align*}
\eta_{t_{0},i}(t)=\int_{t}^{t+t_{0}}\varphi_{i}(s)\,\dx s=\int_{t}^{t+t_{0}}\tilde{a}_{i}^{1}\sin(\sqrt{\mu_{i}}t)+\tilde{b}_{i}^{1}\cos(\sqrt{\mu_{i}}t)\,\dx s+t_{0}\tilde{b}_{i}^{0}.
\end{align*}
This implies (\ref{eq:xi_structure_phii}) with $\tilde{a}_{i}:=\tilde{a}_{i}^{1}$,
$\tilde{b}_{i}:=\tilde{b}_{i}^{1}$. Next, similarly as for $\hat{\eta}_{t_{0}}$,
we can define 
\begin{align*}
\hat{\varphi}(t,y):=\sum_{i\in I_{\eta}}\hat{\varphi}_{i}(t)\psi_{i}(y):=\sum_{i\in I_{\eta}}\varphi_{i}(t-t_{i})\psi_{i}(y)
\end{align*}
satisfying $\hat{\varphi}\in A$. %
{} Now we are in a position to construct the limit pressure wave $\eta^{*}$
for the displacement $\xi$: Integration yields 
\[
\int_{t_{i}}^{t}\hat{\varphi}_{i}(s)\,\dx s=-\frac{\tilde{a}_{i}}{\sqrt{\mu_{i}}}\cos(\sqrt{\mu_{i}}(t-t_{i}))+\frac{\tilde{b}_{i}}{\sqrt{\mu_{i}}}\sin(\sqrt{\mu_{i}}(t-t_{i}))+\frac{\tilde{a}_{i}}{\sqrt{\mu_{i}}}.
\]
Set $\eta_{0}^{*}:=\sum_{i\in I_{\eta}}-\frac{\tilde{a}_{i}}{\sqrt{\mu_{i}}}\psi_{i}$and
\[
\eta^{*}(t,y):=\eta_{0}^{*}+\sum_{i\in I_{\eta}}\int_{t_{i}}^{t}\hat{\varphi}_{i}(s)\psi_{i}(y)\,\dx s=\sum_{i\in I_{\eta}}\left(-\frac{\tilde{a}_{i}}{\sqrt{\mu_{i}}}\cos(\sqrt{\mu_{i}}(t-t_{i}))+\frac{\tilde{b}_{i}}{\sqrt{\mu_{i}}}\sin(\sqrt{\mu_{i}}(t-t_{i}))\right)\psi_{i}.
\]
Again, the angle sum theorems show that $\eta^{*}\in A.$ \\
 \textbf{Step 5: Proof of (\ref{eq:xi_conv_deriv_baddomain-1})} \\
 First, we observe that 
\begin{align*}
\eta^{*}(t+t_{0})-\eta^{*}(t)=\int_{t}^{t+t_{0}}\hat{\varphi}(s)\,\dx s=\hat{\eta}_{t_{0}}(t)
\end{align*}
for all $t\in\mathbb{R}$ and all $t_{0}\in(0,t_{0}^{*}]$. As the
convergences in (\ref{eq:limit_etahat}), (\ref{eq:limit_etahat_H1*})
and (\ref{eq:limit_etahat_Sigma}) can be obtained uniformly in $t_{0}\in(0,t_{0}^{*}]$
for the corresponding $\hat{\eta}_{t_{0}}=\eta^{*}(\cdot+t_{0})-\eta^{*}(\cdot)$,
we conclude that 
\begin{align*}
\lim_{t\to\infty}\sup_{t_{0}\in(0,t_{0}^{*}]}\Vert\xi(t+t_{0})-\xi(t)-\eta^{*}(t+t_{0})+\eta^{*}(t)\Vert_{\C^{0}(\H1^{1}(\OS))\cap\C^{1}(\Lp^{2}(\OS))\cap\C^{2}(\H1^{-1}(\OS))}=0,
\end{align*}
\begin{align*}
\lim_{t\to\infty}\sup_{t_{0}\in(0,t_{0}^{*}]}\Vert\ddot{\xi}(\cdot+t+t_{0})-\ddot{\xi}(\cdot+t)-\ddot{\eta}^{*}(t+t_{0})+\ddot{\eta}^{*}(t)\Vert_{\Lp^{2}(0,T/2;\H1^{1}(\OS)^{*})}=0\hspace{0.2cm}
\end{align*}
and 
\begin{align*}
\lim_{t\to\infty}\sup_{t_{0}\in(0,t_{0}^{*}]}\Vert\Sigma(\xi(\cdot+t+t_{0})-\xi(\cdot+t)-\eta^{*}(t+t_{0})+\eta^{*}(t))n\Vert_{\Lp^{2}(0,T/2;\H1^{-1/2}(\OS))}=0.
\end{align*}
Consequently, we can apply the techniques of the proof of Corollary
\ref{cor:xi_derivatives_gooddomain} to $\xi-\eta^{*}$ to show (\ref{eq:xi_conv_deriv_baddomain-1})
and 
\begin{align}
\lim_{t\to\infty}\Vert\ddot{\xi}(t+\cdot)-\ddot{\eta}^{*}(\cdot+t)\Vert_{\Lp^{2}(0,T/2;\H1^{1}(\OS)^{*})}=0.\label{eq:xi_eta_H1*}
\end{align}
\textbf{Step 6: Proof of (\ref{eq:xi_conv_baddomain-1})}\\
 Define $\oxi=\xi-\eta^{*}$ and $\oq=q-q^{*}$ and recall that $r=P_{\mathcal{R}}(\oxi)$.
Set 
\[
\xi_{N}(t):=\oq(t)\varphi_{N}+r(t).
\]
Then for every $t\geq0$, $\xi_{N}(t)$ solves the stationary Neumann
system (\ref{eq:xi_stat_system-1}) with $q=\oq(t)$. We use $P_{\mathcal{R}}((\oxi-\xi_{N})(t))=0$,
Lemma \ref{lem:xi_korn-1} and Young's inequality to estimate 
\begin{align*}
 & \int_{\OS}\Sigma(\oxi-\xi_{N}):\varepsilon(\oxi-\xi_{N})(t)\,\dx y\\
=\, & \int_{\OS}\ddot{\oxi}\cdot(\oxi-\xi_{N})(t)\,\dx y+\int_{\dOS}\Sigma(\oxi-\xi_{N})n\cdot(\oxi-\xi_{N})(t)\,\dx S(y)\\
\leq\, & C\left(\Vert\ddot{\oxi}(t)\Vert_{\H1^{1}(\OS)^{*}}\Vert\varepsilon(\oxi-\xi_{N})(t))\Vert_{\Lp^{2}(\OS)}+\Vert(\Sigma(\oxi)n-\oq n)(t)\Vert_{\H1^{-1/2}(\dOS)}\Vert\varepsilon(\oxi-\xi_{N})(t)\Vert_{\Lp^{2}(\OS)}\right)\\
\leq\, & C\left(\Vert\ddot{\oxi}(t)\Vert_{\H1^{1}(\OS)^{*}}^{2}+\Vert(\Sigma(\oxi)n-\oq)n(t)\Vert_{\H1^{-1/2}(\dOS)}^{2}\right)+\frac{1}{2}\int_{\OS}\Sigma(\oxi-\xi_{N}):\varepsilon(\oxi-\xi_{N})(t)\,\dx y
\end{align*}
for any $t\geq0$. Together with (\ref{eq:xi_ddotxi_L2H1*}), (\ref{eq:xi_conv_Sigma_q})
and (\ref{eq:xi_eta_H1*}), this implies that 
\begin{align*}
\Vert(\oxi-\xi_{N})(\cdot+t)\Vert_{\Lp^{2}(\H1^{1}(\OS))}^{2} & \leq C\int_{0}^{T/2}\int_{\OS}\Sigma(\oxi-\xi_{N}):\varepsilon(\oxi-\xi_{N})(s+t)\,\dx y\dx s\\
 & \leq C\left(\Vert\ddot{\oxi}(\cdot+t)\Vert_{\Lp^{2}(\H1^{1}(\OS)^{*})}^{2}+\Vert(\Sigma(\oxi)n-\oq n)(\cdot+t)\Vert_{\Lp^{2}(\H1^{-1/2}(\dOS))}^{2}\right)\overset{t\to\infty}{\to}0.
\end{align*}
Define the intervals $I_{n}:=[n\frac{T}{2},(n+1)\frac{T}{2})$. Then
for all $n\in\mathbb{N}$, there exists $t_{n}\in I_{n}$ such that
\begin{align*}
\Vert(\oxi-\xi_{N})(t_{n})\Vert_{\H1^{1}(\OS)}^{2}\leq\frac{2}{T}\int_{0}^{T/2}\Vert(\oxi-\xi_{N})(s+nT/2)\Vert_{\H1^{1}(\OS)}^{2}\,\dx s\to0\text{ for }n\to\infty.
\end{align*}
Now define $\bar{\xi}_{N}$ through $\bar{\xi}_{N}(t):=\xi_{N}(t_{n})$
if $t\in I_{n-1}$. Then by (\ref{eq:xi_conv_difference_xi}), 
\begin{align*}
\Vert(\oxi-\bar{\xi}_{N})(t)\Vert_{\H1^{1}(\OS)} & \leq\Vert\oxi(t)-\oxi(t+(tn-t)/2)\Vert_{\H1^{1}(\OS)}+\Vert\oxi(t+(t_{n}-t)/2)-\oxi(t_{n})\Vert_{\H1^{1}(\OS)}\\
 & \quad+\Vert(\oxi-\xi_{N})(t_{n})\Vert_{\H1^{1}(\OS)}\to0\text{ as }t\to\infty.
\end{align*}
Next, note that the definition of $\varphi_{N}^{0}$ yields $\int_{\OS}\Sigma(\oxi_{0}-\varphi_{N}^{0}):\varepsilon(\varphi_{N})\,\dx y=0.$Due
to $\mathcal{R}=\ker(\varepsilon)$ and $\mathcal{E}=\Span\{\varphi_{N}\}+\mathcal{R}$,
even for any $\tilde{\varphi}\in\mathcal{E}$, 
\begin{align}
\int_{\OS}\Sigma(\oxi_{0}-\varphi_{N}^{0}):\varepsilon(\tilde{\varphi})\,\dx y=0.\label{eq:xi_xiN0-1}
\end{align}
We want to show that also 
\begin{align}
\int_{\OS}\Sigma(\oxi(t)-\varphi_{N}^{0}):\varepsilon(\tilde{\varphi})\,\dx y=0\hspace{0.2cm}\text{ for all }t\geq0.\label{eq:xi_orthogonal-1}
\end{align}
Subtracting any $\hat{\varphi}\in\mathcal{E}$ from $\oxi$ does not
influence the fluid part of the coupled system because the elements
of $\mathcal{E}$ solve the stationary Neumann problem (\ref{eq:xi_stat_system-1})
for some $\hat{q}\in\mathbb{R}$. For $\varphi_{N}^{0}\in\Span\{\varphi_{N}\}\subset\mathcal{E}$
defined in (\ref{eq:def_phiN0}), the corresponding pressure term
is given by 
\[
q_{N}^{0}:=\frac{\int_{\OS}\Sigma(\oxi_{0}):\varepsilon(\varphi_{N})\,\dx y}{\int_{\OS}\Sigma(\varphi_{N}):\varepsilon(\varphi_{N})\,\dx y}.
\]
Consequently $(u,p-q^{*},\oxi)$, $(u,p-q^{*}-q_{N}^{0},\oxi-\varphi_{N}^{0})$
and also $(u,p-q^{*}-q_{N}^{0}-\hat{q},\oxi-\varphi_{N}^{0}-\hat{\varphi})$
are global solutions for the same fluid motion $u$. Hence the energies
\begin{align*}
E_{\oxi-\varphi_{N}^{0}}(t) & :=\Vert u(t)\Vert_{\Lp^{2}(\OF)}^{2}+\Vert\dot{\oxi}(t)\Vert_{\Lp^{2}(\OS)}^{2}+\int_{\OS}\Sigma(\oxi(t)-\varphi_{N}^{0}):\varepsilon(\oxi(t)-\varphi_{N}^{0})\,\dx y,\text{ and}\\
E_{\oxi-\varphi_{N}^{0}-\bar{\varphi}}(t) & :=\Vert u(t)\Vert_{\Lp^{2}(\OF)}^{2}+\Vert\dot{\oxi}(t)\Vert_{\Lp^{2}(\OS)}^{2}+\int_{\OS}\Sigma(\oxi(t)-\varphi_{N}^{0}-\hat{\varphi}):\varepsilon(\oxi(t)-\varphi_{N}^{0}-\hat{\varphi})\,\dx y
\end{align*}
satisf
\begin{align*}
E_{\oxi^{-}\varphi_{N}^{0}-\hat{\varphi}}(t)-E_{\oxi-\varphi_{N}^{0}}(t)=E_{\oxi-\varphi_{N}^{0}-\hat{\varphi}}(0)-E_{\oxi-\varphi_{N}^{0}}(0)
\end{align*}
for all $t\geq0$. Since it follows from (\ref{eq:xi_xiN0-1}) that
\[
E_{\oxi-\varphi_{N}^{0}-\hat{\varphi}}(0)-E_{\oxi-\varphi_{N}^{0}}(0)=\int_{\OS}\Sigma(\hat{\varphi}):\varepsilon(\hat{\varphi})\,\dx y,
\]
we obtain 
\begin{align*}
\int_{\OS}\Sigma(\hat{\varphi}):\varepsilon(\hat{\varphi})\,\dx y & =E_{\oxi-\varphi_{N}^{0}-\hat{\varphi}}(0)-E_{\oxi-\varphi_{N}^{0}}(0)=E_{\oxi-\varphi_{N}^{0}-\hat{\varphi}}(t)-E_{\oxi-\varphi_{N}^{0}}(t)\\
 & =-2\int_{\OS}\Sigma(\oxi(t)-\varphi_{N}^{0}):\varepsilon(\hat{\varphi})\,\dx y+\int_{\OS}\Sigma(\hat{\varphi}):\varepsilon(\hat{\varphi})\,\dx y
\end{align*}
for all $t\geq0$. Hence, we conclude (\ref{eq:xi_orthogonal-1}).
Using this for $\hat{\varphi}=\bar{\xi}_{N}(t)-\varphi_{N}^{0}\in\mathcal{E}$
and using (\ref{eq:xi_orthogonal-1})
yields 
\[
\Vert\varepsilon(\oxi(t)-\varphi_{N}^{0})\Vert_{\Lp^{2}(\OS)}^{2}+\Vert\varepsilon(\bar{\xi}_{N}(t)-\varphi_{N}^{0})\Vert_{\Lp^{2}(\OS)}^{2}=\Vert\varepsilon(\oxi-\bar{\xi}_{N})(t)\Vert_{\Lp^{2}(\OS)}^{2}\overset{t\to\infty}{\to}0.
\]
Finally, $P_{\mathcal{R}}(\oxi(t)-r(t)-\varphi_{N}^{0})=0$ and Lemma
\ref{lem:xi_korn-1} imply that 
\[
\lim_{t\to\infty}\Vert\oxi(t)-\varphi_{N}^{*,0}-r(t)\Vert_{\H1^{1}(\OS)}\leq\lim_{t\to\infty}C\Vert\varepsilon(\xi(t)-\varphi_{N}^{0})\Vert_{\Lp^{2}(\OS)}=0
\]
for 
\begin{align*}
\varphi_{N}^{*,0}:=\frac{\int_{\OS}\Sigma(\xi_{0}-\eta^{*}(0)):\varepsilon(\varphi_{N})\,\dx y}{\int_{\OS}\Sigma(\varphi_{N}):\varepsilon(\varphi_{N})\,\dx y}\varphi_{N}.
\end{align*}
Due to $\eta^{*}\vert_{\dOS}=0$ and $\Div(\Sigma(\varphi_{N}))=0$,
also 
\begin{align*}
\int_{\OS}\Sigma(\eta^{*}(0)):\varepsilon(\varphi_{N})\,\dx y=-\int_{\OS}\eta^{*}(0)\cdot\Div(\Sigma(\varphi_{N}))\,\dx y+\int_{\dOS}\eta^{*}(0)\cdot\Sigma(\varphi_{N})n\,\dx S(y)=0
\end{align*}
holds, and hence $\varphi_{N}^{0}=\varphi_{N}^{*,0}$. This implies
(\ref{eq:xi_conv_baddomain-1}) and Theorem \ref{thm:main result xi}
is proved. 

\section{Appendix}

\label{SecApp}

\subsection{Definition of spaces and auxiliary estimates}

Given a Banach space $X$, $T>0$ and $0<s<1$, for $f\in\Lp^{2}(0,T;X)$
we define 
\[
\left[f\right]_{s,(0,T),X}:=\left(\int_{0}^{T}\int_{0}^{T}\frac{\Vert f(t_{1},\cdot)-f(t_{2},\cdot)\Vert_{X}^{2}}{\vert t_{1}-t_{2}\vert^{2s+1}}\,\textrm{d}t_{1}\textrm{d}t_{2}\right)^{1/2}.
\]
We denote by $\H1^{s}(0,T;X)$ the Sobolev-Slobodeckii spaces with
norms 
\begin{align*}
\Vert f\Vert_{\H1^{s}(0,T;X)} & :=\begin{cases}
\left(\Vert f\Vert_{\Lp^{2}(0,T;X)}^{2}+\left[f\right]_{s,(0,T),X}^{2}\right)^{1/2} & \text{if }0<s<1,\\
\left(\Vert f\Vert_{\H1^{1}(0,T;X)}^{2}+\left[\dot{f}\right]_{s,(0,T),X}^{2}\right)^{1/2} & \text{if }1<s<2.
\end{cases}\\
\end{align*}

\begin{lem}[{{{\cite[Corollary A.3]{BGT2019}}}}]
Let $\frac{1}{2}<\sigma\leq1$ and $0<s<\sigma$. Then there exists
a constant $C>0$ independent of $T$ such that 
\[
\Vert f\Vert_{\H1^{s}(0,T;X)}\leq CT^{\sigma-s}\Vert f\Vert_{\H1^{\sigma}(0,T;X)}
\]
holds for all $f\in\H1^{\sigma}(0,T;X)$ with $f(0,\cdot)=0$.
\end{lem}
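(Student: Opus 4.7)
The plan is to reduce to the interval $(0,1)$ by a time-rescaling argument and exploit the vanishing trace $f(0)=0$ via a Hardy/Poincar\'e-type inequality, which is available because $\sigma > 1/2$ ensures the trace at $0$ is well-defined. First I would dispose of the case $T \geq 1$ by a direct embedding argument: by splitting the Gagliardo double integral into the regions $|t_1-t_2|\leq 1$ and $|t_1-t_2|>1$, one gets $\Vert f\Vert_{\H1^s(0,T;X)} \leq C\Vert f\Vert_{\H1^\sigma(0,T;X)}$ with $C$ independent of $T$, and since $T^{\sigma-s}\geq 1$ the claim follows. So the interesting case is $T\in(0,1)$.

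For $T \in (0,1)$, define $\tilde{f}(\tau):=f(T\tau)$ on $(0,1)$, which still satisfies $\tilde{f}(0)=0$. A direct change of variables gives the scaling relations
\[
\Vert f\Vert_{\Lp^2(0,T;X)}^2 = T\,\Vert\tilde{f}\Vert_{\Lp^2(0,1;X)}^2,\qquad [f]_{s,(0,T),X}^2 = T^{1-2s}\,[\tilde{f}]_{s,(0,1),X}^2,
\]
and analogously for $\sigma$, while for $\sigma=1$ one has $\Vert\dot{\tilde{f}}\Vert_{\Lp^2(0,1;X)}^2 = T\,\Vert\dot{f}\Vert_{\Lp^2(0,T;X)}^2$. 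Since $T<1$ and $s>0$, one has $T \leq T^{1-2s}$, so
\[
\Vert f\Vert_{\H1^s(0,T;X)}^2 \leq T^{1-2s}\,\Vert\tilde{f}\Vert_{\H1^s(0,1;X)}^2.
\]
On $(0,1)$ the standard embedding $\H1^\sigma(0,1;X)\hookrightarrow\H1^s(0,1;X)$ gives $\Vert\tilde{f}\Vert_{\H1^s(0,1;X)}\leq C\Vert\tilde{f}\Vert_{\H1^\sigma(0,1;X)}$ with a $T$-independent constant.

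The key step, where the assumption $\sigma>1/2$ enters decisively, is to kill the $\Lp^2$-contribution on $(0,1)$: since $\tilde{f}(0)=0$, the Hardy inequality (in the form $\int_0^1 \Vert\tilde{f}(\tau)\Vert_X^2/\tau^{2\sigma}\,\dx\tau \leq C[\tilde{f}]_{\sigma,(0,1),X}^2$ for $1/2<\sigma<1$, and the ordinary Poincar\'e inequality for $\sigma=1$) yields
\[
\Vert\tilde{f}\Vert_{\H1^\sigma(0,1;X)}^2 \leq C\cdot\begin{cases}[\tilde{f}]_{\sigma,(0,1),X}^2 & \text{if }1/2<\sigma<1,\\ \Vert\dot{\tilde{f}}\Vert_{\Lp^2(0,1;X)}^2 & \text{if }\sigma=1.\end{cases}
\]
Using the scaling relations backwards, both right-hand sides are bounded by $C\,T^{2\sigma-1}\,\Vert f\Vert_{\H1^\sigma(0,T;X)}^2$. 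Combining everything yields $\Vert f\Vert_{\H1^s(0,T;X)}^2 \leq C\,T^{1-2s}\cdot T^{2\sigma-1}\,\Vert f\Vert_{\H1^\sigma(0,T;X)}^2 = C\,T^{2(\sigma-s)}\,\Vert f\Vert_{\H1^\sigma(0,T;X)}^2$, which is the claim.

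The main subtlety is the mismatch between the scaling of $\Vert\cdot\Vert_{\Lp^2}$ and the Gagliardo seminorm $[\cdot]_\sigma$ on $(0,T)$: without the trace condition $f(0)=0$, the $\Lp^2$-part contributes a factor $T^{2\sigma}$ rather than $T^{2(\sigma-s)}$, which is too large for small $T$. The Hardy/Poincar\'e argument is precisely what is needed to absorb the $\Lp^2$-term into the seminorm uniformly in $T$, and this is the reason the hypothesis $\sigma > 1/2$ (so that the trace is well-defined and Hardy applies) is indispensable.
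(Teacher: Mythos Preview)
The paper does not supply its own proof of this lemma; it is quoted verbatim from \cite[Corollary~A.3]{BGT2019}. Your argument is correct and is essentially the standard one: rescale to the unit interval, use the embedding $\H1^\sigma(0,1;X)\hookrightarrow\H1^s(0,1;X)$, then invoke the Poincar\'e/Hardy-type inequality $\Vert\tilde f\Vert_{\Lp^2(0,1;X)}\leq C[\tilde f]_{\sigma,(0,1),X}$ (resp.\ $\leq C\Vert\dot{\tilde f}\Vert_{\Lp^2}$ for $\sigma=1$) for functions with $\tilde f(0)=0$ to eliminate the badly-scaling $\Lp^2$-contribution, and scale back. The $T\geq 1$ case via splitting the Gagliardo integral at $|t_1-t_2|=1$ is also fine. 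One small remark: in your final ``subtlety'' paragraph the phrase ``contributes a factor $T^{2\sigma}$ \dots\ which is too large for small $T$'' is garbled---without the trace condition the $\Lp^2$-part actually scales like $T^{-2s}$ after the rescaling, which blows up as $T\to 0$; that is the obstruction you mean. This is only an expository slip and does not affect the proof.
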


For general $f\in\H1^{\sigma}(0,T;X)$, the preceeding Lemma implies
that 
\begin{align}
\begin{split}\Vert f\Vert_{\H1^{s}(0,T;X)} & \leq\Vert f-f(0,\cdot)\Vert_{\H1^{s}(0,T;X)}+\Vert f(0,\cdot)\Vert_{\H1^{s}(0,T;X)}\\
 & \leq CT^{\sigma-s}\Vert f-f(0,\cdot)\Vert_{\H1^{\sigma}(0,T;X)}+T^{1/2}\Vert f(0,\cdot)\Vert_{X}\\
 & \leq CT^{\sigma-s}\Vert f\Vert_{\H1^{\sigma}(0,T;X)}+\left(CT^{1/2+\sigma-s}+T^{1/2}\right)\Vert f(0,\cdot)\Vert_{X}.
\end{split}
\label{eq:est_appendix_BGT}
\end{align}

\begin{lem}[{{{\cite[Lemma A.5]{BGT2019}}}}]
\label{lem:est_interpolation_BGT}

a) Let $0\leq s\leq1$, $\sigma_{1},\,\sigma_{2}\geq0$ and set $\sigma:=s\sigma_{1}+(1-s)\sigma_{2}$.
Then 
\begin{align*}
\H1^{1}(\H1^{\sigma_{1}}(\OF))\cap\Lp^{2}(\H1^{\sigma_{2}}(\OF))\hookrightarrow\H1^{s}(\H1^{\sigma}(\OF))
\end{align*}
and there exists a constant $C>0$ independent of $T$ such that 
\begin{align*}
\Vert v\Vert_{\H1^{s}(\H1^{\sigma}(\OF))}\leq C\Vert v\Vert_{\H1^{1}(\H1^{\sigma_{1}}(\OF))}^{s}\Vert v\Vert_{\Lp^{2}(\H1^{\sigma_{2}}(\OF))}^{1-s}
\end{align*}
for all $v\in\H1^{1}(\H1^{\sigma_{1}}(\OF))\cap\Lp^{2}(\H1^{\sigma_{2}}(\OF))$.
\\
 b) Let $1\leq s\leq2$, $\sigma_{1},\,\sigma_{2}\geq0$ and set $\sigma:=(s-1)\sigma_{1}+(2-s)\sigma_{2}$.
Then 
\begin{align*}
\H1^{2}(\H1^{\sigma_{1}}(\OF))\cap\H1^{1}(\H1^{\sigma_{2}}(\OF))\hookrightarrow\H1^{s}(\H1^{\sigma}(\OF))
\end{align*}
and there exists a constant $C>0$ independent of $T$ such that 
\begin{align*}
\Vert v\Vert_{\H1^{s}(\H1^{\sigma}(\OF))}\leq C\Vert v\Vert_{\H1^{2}(\H1^{\sigma_{1}}(\OF))}^{s}\Vert v\Vert_{\H1^{1}(\H1^{\sigma_{2}}(\OF))}^{1-s}
\end{align*}
for all $v\in\H1^{2}(\H1^{\sigma_{1}}(\OF))\cap\H1^{1}(\H1^{\sigma_{2}}(\OF))$. 
\end{lem}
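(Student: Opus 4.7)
The plan is to read the statement as a combined temporal and spatial interpolation: temporal interpolation between $\H1^{1}$ and $\Lp^{2}$ on the time axis, spatial interpolation on the Sobolev scale $\H1^{\sigma_{1}},\H1^{\sigma_{2}}$, executed simultaneously by Fourier arguments after a $T$-independent extension to the whole line. The $T$-independence of the constant $C$ is the point that needs attention throughout; the rest is a standard Plancherel/Hölder calculation.

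\textbf{Execution of part (a).} Write $X_{i}:=\H1^{\sigma_{i}}(\OF)$ and $X_{\sigma}:=\H1^{\sigma}(\OF)$. First, I would extend $v\in\H1^{1}(0,T;X_{1})\cap\Lp^{2}(0,T;X_{2})$ to a function $w=Ev$ on $\mathbb{R}$ via reflection-and-cutoff (Stein-type) in the time variable, applied in the Bochner setting. A standard construction makes $E$ bounded from $\H1^{1}(0,T;X_{i})$ into $\H1^{1}(\mathbb{R};X_{i})$ and from $\Lp^{2}(0,T;X_{i})$ into $\Lp^{2}(\mathbb{R};X_{i})$ with operator norms independent of $T$. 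The classical real-interpolation identity on the Sobolev scale $[X_{1},X_{2}]_{1-s,2}=X_{\sigma}$ yields
\[
\Vert f\Vert_{X_{\sigma}}\leq C_{\mathrm{sp}}\Vert f\Vert_{X_{1}}^{s}\Vert f\Vert_{X_{2}}^{1-s}\qquad\text{for all }f\in X_{1}\cap X_{2}.
\]
Applying this pointwise in $\tau\in\mathbb{R}$ to the Fourier transform $\hat{w}(\tau)$ and squaring gives
\[
(1+\tau^{2})^{s}\Vert\hat{w}(\tau)\Vert_{X_{\sigma}}^{2}\leq C_{\mathrm{sp}}^{2}\bigl[(1+\tau^{2})\Vert\hat{w}(\tau)\Vert_{X_{1}}^{2}\bigr]^{s}\bigl[\Vert\hat{w}(\tau)\Vert_{X_{2}}^{2}\bigr]^{1-s}.
\]
Integrating in $\tau$ and using Hölder with exponents $1/s,1/(1-s)$, then using Plancherel to identify the Fourier-side integrals with the $\H1^{s}(\mathbb{R};X_{\sigma})$, $\H1^{1}(\mathbb{R};X_{1})$ and $\Lp^{2}(\mathbb{R};X_{2})$ norms, restricting back to $(0,T)$, and invoking the $T$-independent boundedness of $E$ produces the asserted bound.

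\textbf{Part (b).} For $1\leq s\leq 2$ I would apply part (a) to $\dot{v}$ with parameter $s':=s-1\in[0,1]$. Since $v\in\H1^{2}(X_{1})\cap\H1^{1}(X_{2})$, one has $\dot{v}\in\H1^{1}(X_{1})\cap\Lp^{2}(X_{2})$ and the spatial parameter becomes $s'\sigma_{1}+(1-s')\sigma_{2}=(s-1)\sigma_{1}+(2-s)\sigma_{2}=\sigma$, exactly as required. Part (a) then controls the Slobodeckii seminorm $[\dot{v}]_{s-1,(0,T),X_{\sigma}}$ by $\Vert v\Vert_{\H1^{2}(X_{1})}^{s-1}\Vert v\Vert_{\H1^{1}(X_{2})}^{2-s}$. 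For the lower-order $\H1^{1}(0,T;X_{\sigma})$ part of the full $\H1^{s}$ norm, one applies part (a) with $s=0$ and $s=1$ (together with the embeddings $\H1^{2}(X_{1})\subset\H1^{1}(X_{1})$ and $\H1^{1}(X_{2})\subset\Lp^{2}(X_{2})$) and collects terms.

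\textbf{Main obstacle.} The only genuinely delicate point is producing an extension $E$ with $T$-independent norm: naive candidates (zero extension, or an affine rescaling of $(0,T)$ onto a fixed reference interval) either break $\H1^{1}$-continuity at the endpoints or pick up explicit $T$-factors in either the $\H1^{1}$ or $\Lp^{2}$ estimate, and would only give the estimate with a $T$-dependent constant. A Stein-type reflection supported in $(-T,2T)$ with cutoff handles both continuities simultaneously, but checking that the resulting operator norm does not depend on $T$ (rather than, say, growing like $T^{1/2}$) is the technical heart of the argument. Everything else reduces to the standard Fourier/Hölder calculation above.
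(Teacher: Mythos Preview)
This lemma is not proved in the paper; it is quoted from \cite[Lemma~A.5]{BGT2019} as an external input, so there is no in-paper argument to compare your sketch against.

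That said, your approach has a genuine gap at exactly the point you flag. You assert that a Stein-type reflection-and-cutoff yields an extension $E\colon \H1^{1}(0,T;X)\to\H1^{1}(\mathbb{R};X)$ with operator norm independent of $T$, but no such extension can exist for small $T$. Take $v\equiv\phi$ constant in time. Then $\Vert v\Vert_{\H1^{1}(0,T;X)}=T^{1/2}\Vert\phi\Vert_{X}$, whereas any $w\in\H1^{1}(\mathbb{R};X)$ with $w|_{(0,T)}=v$ has $\Vert w\Vert_{\Lp^{\infty}(\mathbb{R};X)}\geq\Vert\phi\Vert_{X}$ and hence, by the Sobolev embedding $\H1^{1}(\mathbb{R})\hookrightarrow\Lp^{\infty}(\mathbb{R})$, $\Vert w\Vert_{\H1^{1}(\mathbb{R};X)}\geq c\Vert\phi\Vert_{X}$ with a universal $c>0$. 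Thus the extension norm is bounded below by $cT^{-1/2}$ as $T\to 0$. Your concrete proposal does worse: a cutoff supported in $(-T,2T)$ has derivative of size $T^{-1}$, contributing $T^{-1}\Vert v\Vert_{\Lp^{2}(0,T;X_{1})}$ to $\Vert(Ev)'\Vert_{\Lp^{2}(\mathbb{R};X_{1})}$. So the ``technical heart'' cannot be completed along the lines you indicate; the Fourier-on-$\mathbb{R}$ route has to be replaced, for instance by working directly with the intrinsic Slobodeckii seminorm on $(0,T)$, or by abstract real/complex interpolation of the pair $\bigl(\Lp^{2}(0,T;X_{2}),\,\H1^{1}(0,T;X_{1})\bigr)$ where the $T$-dependence can be tracked explicitly. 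The statement itself passes the constant-in-time test above (both sides scale like $T^{1/2}$), so it is only your method that fails, not the lemma.

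A smaller point on part (b): your reduction to part (a) via $\dot v$ produces exponents $s-1$ and $2-s$, not the $s$ and $1-s$ printed in the displayed inequality (which would be nonpositive for $s\in[1,2]$). This is almost certainly a typo in the paper's transcription from \cite{BGT2019}; your exponents are the natural ones.
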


We recall some Sobolev embeddings on the interval $(0,T)$ to clarify
the dependence of the appearing constants on the interval length $T>0$. 
\begin{lem}
\label{lem:est_sobolev_T-1} a) Let $s\in(0,1/2)$ and set $q:=\frac{2}{1-2s}$.
Then $\H1^{s}(0,T)\hookrightarrow\Lp^{q}(0,T)$ and there exists a
constant $C>0$ independent of $T$ such that 
\begin{align*}
\Vert f\Vert_{\Lp^{q}(0,T)}\leq C\left(T^{-s}\Vert f\Vert_{\Lp^{2}(0,T)}+\Vert f\Vert_{\H1^{s}(0,T)}\right)
\end{align*}
holds for all $f\in\H1^{s}(0,T)$. \\
 b) Let $s\in(1/2,1)$. Then $\H1^{s}(0,T)\hookrightarrow\C^{0}(0,T)$
and there exists a constant $C>0$ independent of $T$ such that 
\begin{align*}
\Vert f\Vert_{\C^{0}(0,T)}\leq C\left(T^{-1/2}\Vert f\Vert_{\Lp^{2}(0,T)}+T^{s-1/2}\Vert f\Vert_{\H1^{s}(0,T)}\right)
\end{align*}
holds for all $f\in\H1^{s}(0,T)$. 
\end{lem}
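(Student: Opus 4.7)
The plan is to prove both embeddings by the standard trick of scaling to the unit interval, applying the classical (dimensionless) Sobolev embedding there, and then tracking how each norm transforms under the rescaling. This is the only way to isolate the $T$-dependence explicitly, since the classical embedding constants for $\H1^{s}(0,1)\hookrightarrow \Lp^{q}(0,1)$ and $\H1^{s}(0,1)\hookrightarrow\C^{0}(0,1)$ are well known and independent of any length scale.

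Given $f\in \H1^{s}(0,T)$, I would define $g(\tau):=f(T\tau)$ for $\tau\in(0,1)$. A change of variables yields the scaling identities
\[
\Vert f\Vert_{\Lp^{r}(0,T)} = T^{1/r}\Vert g\Vert_{\Lp^{r}(0,1)}\quad\text{for } r\in\{2,q\},
\]
and, substituting $t_{i}=T\tau_{i}$ in the Gagliardo seminorm,
\[
[f]_{s,(0,T)} = T^{1/2-s}\,[g]_{s,(0,1)}.
\]
For part (a), the classical embedding on $(0,1)$ gives $\Vert g\Vert_{\Lp^{q}(0,1)}\leq C\bigl(\Vert g\Vert_{\Lp^{2}(0,1)}+[g]_{s,(0,1)}\bigr)$ with a universal constant $C$. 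Rescaling back and using $1/q=1/2-s$ yields
\[
\Vert f\Vert_{\Lp^{q}(0,T)} = T^{1/q}\Vert g\Vert_{\Lp^{q}(0,1)} \leq C\bigl(T^{1/q-1/2}\Vert f\Vert_{\Lp^{2}(0,T)} + T^{1/q-1/2+s}[f]_{s,(0,T)}\bigr),
\]
and the exponents simplify to $T^{-s}$ and $T^{0}=1$ respectively, giving the claimed estimate (with the seminorm bounded by the full $\H1^{s}$-norm).

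For part (b), using $s>1/2$ the classical embedding on $(0,1)$ gives $\Vert g\Vert_{\C^{0}(0,1)}\leq C\bigl(\Vert g\Vert_{\Lp^{2}(0,1)}+[g]_{s,(0,1)}\bigr)$, and since $\Vert f\Vert_{\C^{0}(0,T)}=\Vert g\Vert_{\C^{0}(0,1)}$, plugging in the scaling formulas gives exactly
\[
\Vert f\Vert_{\C^{0}(0,T)}\leq C\bigl(T^{-1/2}\Vert f\Vert_{\Lp^{2}(0,T)}+T^{s-1/2}[f]_{s,(0,T)}\bigr),
\]
and again $[f]_{s,(0,T)}\leq \Vert f\Vert_{\H1^{s}(0,T)}$ yields the stated bound.

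There is no conceptual obstacle here — the only point requiring care is the bookkeeping of the exponent $1/2-s$ arising from the Gagliardo seminorm scaling and checking that $1/q=1/2-s$ produces the clean factors $T^{-s}$ and $T^{0}$ in part (a) and $T^{-1/2}$, $T^{s-1/2}$ in part (b). The classical Sobolev embeddings on the fixed interval $(0,1)$ can be cited as standard (see, e.g., the trace/embedding theorems for Sobolev--Slobodeckii spaces), since their constants are independent of $T$ by construction.
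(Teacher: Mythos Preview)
Your proof is correct and follows exactly the same scaling-to-the-unit-interval approach as the paper; the only cosmetic difference is that the paper normalizes the rescaled function as $\tilde f(\tau):=T^{1/2}f(T\tau)$ (making the $\Lp^{2}$-norm invariant) whereas you use $g(\tau)=f(T\tau)$, but the bookkeeping of exponents is identical.
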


\begin{proof}
After rescaling a given function $f\in\H1^{s}(0,T)$ to 
\begin{align*}
\tilde{f}(\tau):=T^{1/2}f(T\tau),\hspace{0.3cm}\tau\in(0,1),
\end{align*}
the estimates in a) and b) can be derived from the corresponding embeddings
on the interval $(0,1)$, cf.~\cite[Theorem~5.4]{Hitchhiker}, \cite[Theorem~6.7]{Hitchhiker}
and \cite[Theorem~8.2]{Hitchhiker}. 
\end{proof}
We often use the following estimate obtained by combining (\ref{eq:est_appendix_BGT})
and Lemma~\ref{lem:est_sobolev_T-1}~a): \\
 Let $s\in(0,1/2)$, $\sigma\in(1/2,1)$ and $f\in\H1^{\sigma}(0,T;X)$
for some Banach space $X$. Then for $q:=\frac{2}{1-2s}$, we can
choose $\alpha>0$ such that 
\begin{align}
\left\Vert \Vert f\Vert_{X}\right\Vert _{\Lp^{q}(0,T)}\leq CT^{\alpha}\left(\Vert f\Vert_{\H1^{\sigma}(0,T;X)}+\Vert f(0)\Vert_{X}\right).\label{eq:sobolev_inequality_T-1}
\end{align}

\subsection{The Stokes and the Lamé system}

We recall optimal regularity results for solutions to the Stokes and
the Lamé system.
\begin{thm}[{{{\cite[Theorem 4.1]{BP2007}}}}]
\label{thm:Stokes_BP} Let $D\subset\mathbb{R}^{3}$ be a domain
with a compact boundary $\partial D=\Gamma_{0}\cup\Gamma_{n}$ of
class $\C^{2,1}$, where $\Gamma_{0},\Gamma_{n}$ are open and closed
in $\partial D$. Consider given 
\begin{itemize}
\item $f\in\Lp^{2}((0,T)\times D)$, 
\item $v_{0}\in\H1^{1}(D)$ such that $\Div(v_{0})=0$ and $v_{0}\cdot n\vert_{\Gamma_{0}}=0$ 
\item and $g\in\H1^{1/4}(\Lp^{2}(\Gamma_{n}))\cap\Lp^{2}(\H1^{1/2}(\Gamma_{n}))$. 
\end{itemize}
Then the Stokes system 
\begin{align*}
\begin{cases}
\begin{array}{rcll}
\dot{v}-\Div(\sigma(v,q)) & = & f & \text{in }(0,T)\times D\\
\Div(v) & = & 0 & \text{in }(0,T)\times D\\
v & = & 0 & \text{on }(0,T)\times\Gamma_{0}\\
\sigma(v,q)n & = & g & \text{on }(0,T)\times\Gamma_{n}\\
v(0) & = & v_{0} & \text{in }D,
\end{array}\end{cases}
\end{align*}
admits a unique solution 
\begin{align*}
(v,q)\in\left(\Lp^{2}(\H1^{2}(D))\cap\H1^{1}(\Lp^{2}(D))\right)\times\Lp^{2}(\H1^{1}(D))
\end{align*}
such that $q\in\H1^{1/4}(\Lp^{2}(\Gamma_{n}))\cap\Lp^{2}(\H1^{1/2}(\Gamma_{n}))$,
where the corresponding norms of $(v,q)$ depend continuously on the
data. 
\end{thm}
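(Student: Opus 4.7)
The plan is to establish existence via an abstract evolution argument in Helmholtz spaces and to obtain the sharp regularity by combining elliptic maximal regularity for the stationary mixed Stokes operator with anisotropic trace estimates.

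First, I would set up the weak formulation. Let $V := \{v \in \H1^{1}(D)^3 : \Div v = 0,\, v|_{\Gamma_0} = 0\}$, let $H$ be its closure in $\Lp^{2}(D)^3$, and define the mixed-boundary Stokes operator $A := -\mathbb{P}\Div(2\nu\varepsilon(\cdot))$ on $H$, where $\mathbb{P}$ is the associated Helmholtz projection adapted to the mixed boundary. Korn's inequality on $V$ (using the Dirichlet part $\Gamma_0$) gives coercivity of the form $a(v,\phi) := 2\nu \int_D \varepsilon(v):\varepsilon(\phi)\,\dx y$, so $-A$ is self-adjoint negative definite and generates an analytic semigroup on $H$. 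Existence and uniqueness of a weak solution $v \in \C^{0}(H) \cap \Lp^{2}(V) \cap \H1^{1}(V^*)$ to $\langle \dot v, \phi\rangle_{V^*,V} + a(v,\phi) = \int_D f\cdot\phi \,\dx y + \int_{\Gamma_n} g\cdot\phi \,\dx S$ then follows by Galerkin approximation on the eigenbasis of $A$, using the compatibility on $v_0$.

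Second, to upgrade to $v \in \Lp^{2}(\H1^{2}(D)) \cap \H1^{1}(\Lp^{2}(D))$, I would note that for a.e.\ $t$ the function $v(t)$ solves the stationary mixed Stokes problem with source $f(t) - \dot v(t) \in \Lp^{2}(D)$, Dirichlet data on $\Gamma_0$, and Neumann data $g(t) \in \H1^{1/2}(\Gamma_n)$. Sharp elliptic regularity for the stationary mixed Stokes system on a $\C^{2,1}$ interface, combined with maximal parabolic regularity for the analytic semigroup generated by $-A$ (which upgrades $\dot v$ to $\Lp^{2}(H)$ since $v_0 \in D(A^{1/2})$ matches the assumptions on $v_0$), yields the claimed spatial-temporal bound. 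The pressure $q \in \Lp^{2}(\dot{\H1}^{1}(D))$ is uniquely recovered via a de~Rham argument from $\nabla q = f - \dot v + \Div(2\nu\varepsilon(v))$ in the interior, and the identity $\sigma(v,q)n = g$ holds on $\Gamma_n$ in the trace sense.

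The main obstacle is the sharp boundary regularity $q \in \H1^{1/4}(\Lp^{2}(\Gamma_n)) \cap \Lp^{2}(\H1^{1/2}(\Gamma_n))$. The spatial half follows from the boundary identity $qn = g - 2\nu\varepsilon(v)n$ on $\Gamma_n$, since $v \in \Lp^{2}(\H1^{2}(D))$ yields $\varepsilon(v)n|_{\Gamma_n} \in \Lp^{2}(\H1^{1/2}(\Gamma_n))$ by the standard trace theorem, and $g$ is assumed in the same space. The temporal half is more delicate: by real interpolation between $\Lp^{2}(\H1^{2}(D))$ and $\H1^{1}(\Lp^{2}(D))$ (cf.\ Lemma~\ref{lem:est_interpolation_BGT}~a) with $s = 1/4$, $\sigma_1 = 0$, $\sigma_2 = 2$), one gets $v \in \H1^{3/4}(\H1^{1/2}(D))$, from which an anisotropic trace estimate produces $\varepsilon(v)n|_{\Gamma_n} \in \H1^{1/4}(\Lp^{2}(\Gamma_n))$; combined with the hypothesis $g \in \H1^{1/4}(\Lp^{2}(\Gamma_n))$ and the boundary identity, this closes the bound on $q|_{\Gamma_n}$. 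Continuous dependence on the data follows by linearity from the estimates assembled above.
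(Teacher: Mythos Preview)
The paper does not prove this statement at all: Theorem~\ref{thm:Stokes_BP} is quoted verbatim from \cite[Theorem~4.1]{BP2007} and is used as a black box, so there is no ``paper's own proof'' to compare against. Your sketch is a plausible outline of the standard strategy behind such maximal-regularity results (Helmholtz setting, analytic semigroup, stationary elliptic estimates, anisotropic trace), and it is in the spirit of how \cite{BP2007} proceeds.

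One small slip worth flagging: in your last paragraph you write that interpolation with $s=1/4$, $\sigma_1=0$, $\sigma_2=2$ in Lemma~\ref{lem:est_interpolation_BGT}~a) gives $v\in\H1^{3/4}(\H1^{1/2}(D))$, but with those parameters the lemma actually yields $v\in\H1^{1/4}(\H1^{3/2}(D))$. This is in fact what you need: it gives $\varepsilon(v)\in\H1^{1/4}(\H1^{1/2}(D))$ and hence $\varepsilon(v)n|_{\Gamma_n}\in\H1^{1/4}(\Lp^{2}(\Gamma_n))$ by the trace theorem. From $v\in\H1^{3/4}(\H1^{1/2}(D))$ alone you cannot take a trace of $\varepsilon(v)$, since $\varepsilon(v)$ would only lie in $\H1^{3/4}(\H1^{-1/2}(D))$. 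With that correction the argument closes.
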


\begin{thm}[{{{\cite[Theorem 7.5]{GS1991}}}}]
\label{thm:elliptic_stokes} Let $D\subset\mathbb{R}^{3}$ be a domain
with boundary $\partial D=\Gamma_{0}\cup\Gamma_{n}$ of class $\C^{2,1}$
and let $0\leq s\leq1$. Consider given $f\in\H1^{s}(D)$ and $g\in\H1^{1/2+s}(\Gamma_{n})$.
If the pair $(v,q)\in\H1^{2}(D)\times\H1^{1}(D)$ solves 
\[
\begin{cases}
\begin{array}{rcll}
-\Div(\sigma(v,q)) & = & f & \text{in }D,\\
\Div(v) & = & 0 & \text{in }D,\\
v & = & 0 & \text{on }\Gamma_{0},\\
\sigma(v,q)n & = & g & \text{on }\Gamma_{n},
\end{array}\end{cases}
\]
then it satisfies the estimate 
\[
\Vert v\Vert_{\H1^{2+s}(D)}+\Vert q\Vert_{\H1^{1+s}(D)}\leq C\left(\Vert f\Vert_{\H1^{s}(D)}+\Vert g\Vert_{\H1^{1/2+s}(\Gamma_{n})}\right).
\]
\end{thm}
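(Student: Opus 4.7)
The plan is to prove the a priori estimate by a classical elliptic regularity argument (localization, flattening, freezing coefficients, and perturbation), establish the endpoint cases $s=0$ and $s=1$, and then interpolate. Throughout one uses that $\Gamma_0$ and $\Gamma_n$ are both open and closed in $\partial D$, hence disjoint, so there is no corner or junction and the boundary conditions can be treated independently via a partition of unity.

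First I would prove the case $s=0$, i.e.\ the bound $\Vert v\Vert_{\H1^{2}(D)}+\Vert q\Vert_{\H1^{1}(D)}\leq C(\Vert f\Vert_{\Lp^{2}(D)}+\Vert g\Vert_{\H1^{1/2}(\Gamma_{n})})$. Interior regularity follows from the standard difference-quotient method applied to the Stokes operator, which is elliptic in the Douglis--Nirenberg sense. Near a point of $\Gamma_{0}$, local straightening reduces the problem to a Stokes system on a half-ball with homogeneous Dirichlet data, for which the Cattabriga--Solonnikov $\Lp^{2}$-theory yields the required estimate. Near a point of $\Gamma_{n}$, a similar flattening reduces to the traction problem on a half-space with constant coefficients, which one analyses by partial Fourier transform in the tangential directions; solvability and the a priori bound follow because the Lopatinski--Shapiro condition holds for the traction operator $\sigma(\cdot,\cdot)n$ on $\Div$-free fields. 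A covering and perturbation argument then handles variable coefficients and assembles the global estimate.

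Second, I would push the argument to $s=1$ by tangentially differentiating the flattened equations. The Dirichlet condition $v=0$ on $\Gamma_{0}$ and the Neumann-type datum $g$ on $\Gamma_{n}$ are preserved under tangential differentiation, with $g\in\H1^{3/2}$ providing $\partial_{\tau}g\in\H1^{1/2}$. This yields $\H1^{1}$ control on the tangential derivatives of $v$ to order three and on $q$ to order two; the remaining normal derivatives are reconstructed algebraically from the equation itself, exploiting the incompressibility constraint $\Div v=0$ to solve for the normal derivative of the normal velocity component, and the momentum equation to trade two normal derivatives of $v$ for one normal derivative of $q$ plus lower-order terms. Then fractional values $0<s<1$ are obtained by real (or complex) interpolation of the linear data-to-solution map, using that $\H1^{s}(D)$ and $\H1^{1/2+s}(\Gamma_{n})$ form interpolation scales.

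The main obstacle I expect is the pressure: unlike $v$, the pressure $q$ has no boundary condition of its own and appears only through the traction $\sigma(v,q)n$, so control of $q$ must be deduced indirectly. Concretely, one extracts $\nabla q$ from the momentum equation as $\nabla q=\Div(2\nu\varepsilon(v))+f$, and then recovers the gauge of $q$ using the traction datum on $\Gamma_{n}$ (or a normalisation $\int_{D}q=0$ on components whose boundary lies entirely in $\Gamma_{0}$). Keeping the $\H1^{1/2+s}$ regularity of $q$ on $\Gamma_{n}$ compatible with the $\H1^{1+s}$ regularity in the interior is the delicate bookkeeping step, and it is precisely where the matching of the boundary trace theorem with the Douglis--Nirenberg weights makes the estimate sharp.
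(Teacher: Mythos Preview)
The paper does not prove this theorem at all: it is quoted verbatim from \cite[Theorem~7.5]{GS1991} in the appendix as a known regularity result and is used as a black box throughout. There is therefore no ``paper's own proof'' to compare against.

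That said, your outline is a faithful sketch of the classical Agmon--Douglis--Nirenberg / Solonnikov machinery that underlies the cited reference: localization, boundary flattening, verification of the complementing (Lopatinski--Shapiro) condition for the traction operator, tangential differentiation for the top integer order, and interpolation for fractional $s$. This is indeed how such estimates are obtained in \cite{GS1991}, so your approach is the standard one and would go through; but for the purposes of the present paper no proof is required, only the citation.
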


The following two results are proven in \cite{LLT1986} for the wave
equation and can be transferred to the Lamé system. 
 Relation \eqref{eq:hiddenReg} is often called \emph{hidden regularity}. 
\begin{thm}[{{{\cite[Theorem 2.4]{LLT1986}}}}]
\label{thm:lame_LLT} Let $D\subset\mathbb{R}^{3}$ be an open domain
with boundary $\partial D$ of class $\C^{2}$. For $0\leq\theta<3/2$,
$\theta\neq1/2$, consider given 
\begin{itemize}
\item $\eta_{0}\in\H1^{1+\theta}(D)$, 
\item $\eta_{1}\in\H1^{\theta}(D)$, 
\item and $g\in\mathrm{M}^{1+\theta}(\partial D)=\H1^{1+\theta}(0,T;\Lp^{2}(\partial D))\cap\Lp^{2}(0,T;\H1^{1+\theta}(\partial D))$
such that $g(0)=\eta_{0}\vert_{\partial D}$ and, if $\theta>1/2$,
also $\dot{g}(0)=\eta_{1}\vert_{\partial D}$. 
\end{itemize}
Then the Lamé system 
\begin{align}
\begin{cases}
\begin{array}{rcll}
\ddot{\eta}-\Div(\Sigma(\eta)) & = & 0 & \text{in }(0,T)\times D,\\
\eta & = & g & \text{on }(0,T)\times\partial D,\\
\eta(0) & = & \eta_{0},\,\dot{\eta}(0)=\eta_{1} & \text{in }D,
\end{array}\end{cases}\label{eq:lame_system}
\end{align}
admits a unique solution 
\begin{align*}
\eta\in\C^{0}(\H1^{1+\theta}(D))\cap\C^{1}(\H1^{\theta}(D))\cap\C^{2}(\H1^{-1+\theta}(D)).
\end{align*}
Moreover, 
\begin{align}
\Sigma(\eta)n\vert_{\partial D}\in\mathrm{M}^{\theta}(\partial D)\label{eq:hiddenReg}
\end{align}
and the corresponding norms for $\eta$, $\Sigma(\eta)n\vert_{\partial D}$
depend continuously on the the data. 
\end{thm}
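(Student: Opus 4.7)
The plan is to adapt the classical proof strategy of Lions--Lasiecka--Triggiani from the wave equation to the Lamé system. The argument splits into three parts: existence/uniqueness in the energy space, the hidden regularity statement, and then a bootstrap in the regularity parameter $\theta$.

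First I would settle the base case $\theta=0$. For homogeneous boundary data $g=0$, the Lamé operator $\mathcal{L}=-\Div\Sigma(\cdot)$ with domain $H^2(D)\cap H^1_0(D)$ is self-adjoint and positive, so the standard semigroup/Galerkin construction gives existence in $C^0(H^1_0)\cap C^1(L^2)\cap C^2(H^{-1})$ together with the energy identity. Uniqueness is immediate from this energy identity. For inhomogeneous $g\in H^1((0,T)\times\partial D)$ with $g(0)=\eta_0|_{\partial D}$, I would lift $g$ into the interior by an elliptic extension $w(t):=E_D g(t)$ with $E_D:H^{1/2}(\partial D)\to H^1(D)$ bounded, so that $w\in H^1((0,T)\times D)$ and $w(0)$ extends $\eta_0|_{\partial D}$. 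Then $\eta_h:=\eta-w$ solves a homogeneous Dirichlet problem with source $f:=-\ddot w+\Div\Sigma(w)\in L^2(0,T;H^{-1}(D))$ and admissible initial data, reducing the problem to the previous step (the right-hand side is handled by Duhamel). Continuous dependence follows by tracking constants.

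The heart of the proof is the hidden regularity $\Sigma(\eta)n\in L^2((0,T)\times\partial D)$. I would use the multiplier method: pick a smooth vector field $h\in C^1(\overline D;\mathbb{R}^3)$ with $h|_{\partial D}=n$, and multiply the equation $\ddot\eta-\Div\Sigma(\eta)=0$ by $(h\cdot\nabla)\eta$, integrating over $(0,T)\times D$. For smooth data I would obtain, after integration by parts,
\begin{equation*}
\tfrac12\int_0^T\!\!\int_{\partial D}\Sigma(\eta)n\cdot(h\cdot\nabla\eta)\,\dx S\dx t
=\Bigl[\int_D\dot\eta\cdot(h\cdot\nabla\eta)\,\dx y\Bigr]_0^T+\text{(interior quadratic terms)}.
\end{equation*}
On $\partial D$ the tangential derivatives of $\eta$ are controlled by $\nabla g$, so $(h\cdot\nabla\eta)|_{\partial D}$ can be rearranged to isolate the normal derivative, and algebraic manipulation of $\Sigma(\eta)n\cdot\partial_n\eta$ in terms of $|\Sigma(\eta)n|^2$ (exploiting the Lamé structure $\Sigma=2\lambda_1\varepsilon+\lambda_2\Div\,\mathrm{Id}$ and the ellipticity from Korn) yields a coercive boundary quadratic form. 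Combining with the energy bound and the known regularity of $g$ produces
\begin{equation*}
\|\Sigma(\eta)n\|_{L^2((0,T)\times\partial D)}
\le C\bigl(\|\eta_0\|_{H^1}+\|\eta_1\|_{L^2}+\|g\|_{H^1((0,T)\times\partial D)}\bigr).
\end{equation*}
Density of smooth data justifies this for general data.

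For $\theta\in(0,1)\setminus\{1/2\}$ I would apply the same estimate to $\dot\eta$ (which solves the same system with data $\eta_1,\ddot\eta(0),\dot g$ of one order higher), then interpolate between the integer levels $\theta=0$ and $\theta=1$ using the trace/Lions--Magenes theory; the excluded value $\theta=1/2$ is the well-known threshold where the trace compatibility $g(0)=\eta_0|_{\partial D}$ is borderline. For $\theta\in(1,3/2)$ I would differentiate once more in time and repeat, using the second compatibility condition $\dot g(0)=\eta_1|_{\partial D}$ to guarantee that $\ddot\eta$ has an initial value in the right space. The main obstacle I expect is the algebraic step inside the multiplier identity: for the scalar wave equation one cleanly recovers $|\partial_n\eta|^2$ on $\partial D$, whereas for the Lamé system the boundary quadratic form in $\Sigma(\eta)n$ mixes normal and tangential components, and showing it dominates $|\Sigma(\eta)n|^2$ modulo tangential-derivative terms of $g$ requires careful use of the strong ellipticity of $\Sigma$ and of Korn's inequality on $\partial D$.
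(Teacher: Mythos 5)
Your proposal is essentially a from-scratch reconstruction of the Lasiecka--Lions--Triggiani proof, whereas the paper does not reprove that theorem: it cites \cite[Theorem~2.4]{LLT1986} for the wave equation and supplies only the one ingredient that is genuinely new for the Lam\'e system, namely Horn's trace estimate $\Vert\Sigma(\eta)n\Vert_{\H1^{\theta}((0,T)\times\partial D)}\leq C(\Vert\partial\eta/\partial n\Vert_{\H1^{\theta}((0,T)\times\partial D)}+\Vert\eta\Vert_{\H1^{1+\theta}((0,T)\times\partial D)})$, which converts the hidden regularity of the normal derivative into hidden regularity of the traction at the cost of an extra dependence on $\Vert g\Vert_{\H1^{1+\theta}}$. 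This is exactly the point you flag as your ``main obstacle'': in the multiplier identity the boundary quadratic form $\Sigma(\eta)n\cdot\partial_{n}\eta$ is coercive in $\partial_{n}\eta$ (one computes $\Sigma(\eta)n\cdot\partial_{n}\eta=\lambda_{1}|\partial_{n}\eta|^{2}+(\lambda_{1}+\lambda_{2})(\partial_{n}\eta\cdot n)^{2}+(\text{tangential terms})$, positive since $\lambda_{1},\lambda_{2}>0$), not directly in $\Sigma(\eta)n$; the passage from control of $\partial_{n}\eta$ to control of $\Sigma(\eta)n$ modulo tangential derivatives of $g$ is precisely Horn's estimate, and no Korn-type argument on $\partial D$ is needed. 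Your route buys a self-contained argument and makes the mechanism visible; the paper's route is shorter and, importantly, delivers the anisotropic space-time regularity $\H1^{\theta}((0,T)\times\partial D)$ of the traction directly, whereas your bootstrap by time-differentiation plus interpolation controls time regularity and still requires the tangential spatial regularity of $\Sigma(\eta)n$ on $\partial D$, which again comes from the known wave-equation analysis together with the trace regularity of $g$. With that caveat your sketch is a viable alternative proof.
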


\begin{proof}
The proof of {\cite[Theorem 2.4]{LLT1986}} for the wave equation
can be adapted for the Lamé system by adding a further dependence
of the corresponding right-hand sides on the Dirichlet data $\Vert g\Vert_{\H1^{1+\theta}((0,T)\times\partial D)}$.
The critical part is to check the desired regularity of the Neumann
boundary $\Sigma(\eta)n\vert_{\partial D}$. As already mentioned
in \cite[Remark~3.4]{KTZ2011}, this can be done by proceeding as
in {\cite[Theorem~2.4]{LLT1986}} and using the estimate 
\[
\Vert\Sigma(\eta)n\Vert_{\mathrm{M}^{\theta}(\partial D)}\leq C\left(\left\Vert \frac{\partial\eta}{\partial n}\right\Vert _{\mathrm{M}^{\theta}(\partial D)}+\Vert\nabla_{\tau}\eta\Vert_{\mathrm{M}^{\theta}(\partial D)}\right)\leq C\left(\left\Vert \frac{\partial\eta}{\partial n}\right\Vert _{\mathrm{M}^{\theta}(\partial D)}+\Vert\eta\Vert_{\mathrm{M}^{1+\theta}(\partial D)}\right)
\]
proven in \cite{Horn}, where $\nabla_{\tau}\eta$ denotes the tangential
part of $\nabla\eta$ on the boundary. This yields another extra dependence
on $\Vert g\Vert_{\mathrm{M}^{1+\theta}(\partial D)}$, which however
does not influence the claimed result. 
\end{proof}
Even if the regularity required of the given Dirichlet boundary is
weakened, it is possible to keep the interior regularity of the solution:
\begin{thm}[{{{\cite[Theorem 3.4]{LLT1986}}}}]
\label{thm:lame_LLT_interior} Let $D\subset\mathbb{R}^{3}$ be an
open domain with boundary $\partial D$ of class $\C^{2}$. Consider
given 
\begin{itemize}
\item $\eta_{0}\in\H1^{1}(D)$, 
\item $\eta_{1}\in\Lp^{2}(D)$, 
\item and $g\in\C^{0}(\H1^{1/2}(\partial D))\cap\H1^{1}(\Lp^{2}(\partial D))$
such that $g(0)=\eta_{0}\vert_{\partial D}$. 
\end{itemize}
Then (\ref{eq:lame_system}) admits a unique solution 
\begin{align*}
\eta\in\C^{0}(\H1^{1}(D))\cap\C^{1}(\Lp^{2}(D))
\end{align*}
that depends continuously on the the given data. 
\end{thm}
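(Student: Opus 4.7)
The plan is to prove Theorem \ref{thm:lame_LLT_interior} by transferring the argument of \cite[Theorem 3.4]{LLT1986} from the scalar wave equation to the vector-valued Lamé system, combining an approximation of the data with a transposition estimate that exploits the hidden regularity of the \emph{dual} Lamé problem (Theorem \ref{thm:lame_LLT} with $\theta=0$).

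First I would regularize the data: I construct sequences $\eta_0^n\in\H1^{2}(D)$, $\eta_1^n\in\H1^{1}(D)$, and $g^n\in\H1^{1}((0,T)\times\partial D)$ satisfying the compatibility $g^n(0)=\eta_0^n|_{\partial D}$, converging to $(\eta_0,\eta_1,g)$ in the norm of $\H1^{1}(D)\times\Lp^{2}(D)\times(\C^{0}(\H1^{1/2}(\partial D))\cap \H1^{1}(\Lp^{2}(\partial D)))$. This is standard: extend $g$ smoothly in time, mollify, and adjust the initial data to preserve compatibility as in Lemma \ref{lem:data_approx-1}. By Theorem \ref{thm:lame_LLT} with $\theta=0$, each smoothed problem admits a unique solution
\[
\eta^n\in\C^{0}(\H1^{1}(D))\cap\C^{1}(\Lp^{2}(D))\cap\C^{2}(\H1^{-1}(D))
\]
with $\Sigma(\eta^n)n|_{\partial D}\in\Lp^{2}((0,T)\times\partial D)$.

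The crux is a uniform estimate for the difference $w:=\eta^n-\eta^m$ in $\C^{0}(\H1^{1}(D))\cap\C^{1}(\Lp^{2}(D))$ in terms only of $\|\eta_0^n-\eta_0^m\|_{\H1^1}$, $\|\eta_1^n-\eta_1^m\|_{\Lp^2}$, and $\|g^n-g^m\|_{\C^0(\H1^{1/2})\cap\H1^1(\Lp^2)}$. I would obtain this by transposition: for arbitrary $(\phi_0,\phi_1)\in\Lp^{2}(D)\times\H1^{-1}(D)$ and an arbitrary time $t_0\in[0,T]$, let $\phi$ solve the homogeneous Dirichlet Lamé problem backwards in time from $t_0$ with terminal data $(\phi_0,\phi_1)$, and apply Theorem \ref{thm:lame_LLT} (with $\theta=0$ and the needed symmetry between Dirichlet and Neumann traces under time reversal) to get hidden regularity $\Sigma(\phi)n\in\Lp^{2}((0,t_0)\times\partial D)$ with norm controlled by $\|(\phi_0,\phi_1)\|_{\Lp^2\times\H1^{-1}}$. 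Testing $\partial_t^2 w-\Div\Sigma(w)=0$ against $\phi$ and integrating by parts in space and time yields an identity of the form
\begin{align*}
\int_D \dot w(t_0)\cdot\phi_0\,\dx y-\langle w(t_0),\phi_1\rangle &= \int_D \bigl(\eta_1^n-\eta_1^m\bigr)\phi(0)\,\dx y-\langle \eta_0^n-\eta_0^m,\dot\phi(0)\rangle\\
&\quad +\int_0^{t_0}\!\!\int_{\partial D}(g^n-g^m)\cdot\Sigma(\phi)n\,\dx S(y)\dx s.
\end{align*}
Taking the supremum over $(\phi_0,\phi_1)$ in the dual unit ball gives
\[
\|w(t_0)\|_{\H1^1(D)}+\|\dot w(t_0)\|_{\Lp^2(D)}\le C\bigl(\|\eta_0^n-\eta_0^m\|_{\H1^1}+\|\eta_1^n-\eta_1^m\|_{\Lp^2}+\|g^n-g^m\|_{\Lp^2(\Lp^2(\partial D))}\bigr),
\]
uniformly in $t_0$, with an additional term from $g$ evaluated at time endpoints handled using $\C^0(\H1^{1/2})\cap\H1^1(\Lp^2)\hookrightarrow \C^0(\Lp^2(\partial D))$. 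This Cauchy estimate lets me pass to the limit $\eta^n\to\eta$ in $\C^{0}(\H1^{1}(D))\cap\C^{1}(\Lp^{2}(D))$; verifying that $\eta$ solves \eqref{eq:lame_system} in the distributional sense and attains the initial and boundary data is then routine from the convergence of traces.

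Uniqueness and continuous dependence follow from the same transposition identity applied to an arbitrary solution of \eqref{eq:lame_system}, giving a bound of $\|\eta\|_{\C^0(\H1^1)\cap\C^1(\Lp^2)}$ by $\|\eta_0\|_{\H1^1}+\|\eta_1\|_{\Lp^2}+\|g\|_{\C^0(\H1^{1/2})\cap\H1^1(\Lp^2)}$. The main obstacle I expect is making the transposition identity rigorous at the level of the Lamé operator: one must carefully handle the vector-valued integration by parts on $\partial D$, justify that the hidden-regularity bound of \cite{LLT1986} transfers to the Lamé system via Horn's tangential-normal decomposition (as already used in the proof of Theorem \ref{thm:lame_LLT}), and extract the Neumann trace $\Sigma(\phi)n$ with the correct $\Lp^{2}((0,t_0)\times\partial D)$-estimate by $\|(\phi_0,\phi_1)\|_{\Lp^2\times\H1^{-1}}$ uniformly in $t_0\in[0,T]$.
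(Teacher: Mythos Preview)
The paper does not supply its own proof of this theorem; it is stated as a citation of \cite[Theorem~3.4]{LLT1986}, with the preceding remark that the scalar-wave argument transfers to the Lam\'e system. So there is nothing in the paper to compare against directly, and your task is simply to carry the LLT argument over to the Lam\'e operator correctly.

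Your transposition step contains a genuine regularity mismatch. You take dual data $(\phi_0,\phi_1)\in \Lp^2(D)\times \H1^{-1}(D)$ and assert that Theorem~\ref{thm:lame_LLT} with $\theta=0$ gives $\Sigma(\phi)n\in \Lp^2((0,t_0)\times\partial D)$ with norm controlled by $\|(\phi_0,\phi_1)\|_{\Lp^2\times \H1^{-1}}$. But Theorem~\ref{thm:lame_LLT} with $\theta=0$ requires initial data in $\H1^1\times \Lp^2$, not $\Lp^2\times \H1^{-1}$; at the level $\Lp^2\times \H1^{-1}$ the hidden regularity only yields $\Sigma(\phi)n\in \H1^{-1}((0,t_0)\times\partial D)$. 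Pairing that with the boundary datum would then force $g\in \H1^1((0,T)\times\partial D)$, which is strictly stronger than the hypothesis $g\in \C^0(\H1^{1/2})\cap \H1^1(\Lp^2)$. Thus the Cauchy estimate you write down does not follow, and the whole point of the theorem---that $g$ may be weakened from $\H1^1((0,T)\times\partial D)$ to the anisotropic space $\C^0(\H1^{1/2})\cap \H1^1(\Lp^2)$ without losing interior regularity---is exactly what your argument misses.

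A correct route (and essentially the one in \cite{LLT1986}) works one level lower and then bootstraps. Transposition against dual data in $\H1^1_0\times \Lp^2$---for which Theorem~\ref{thm:lame_LLT} with $\theta=0$ \emph{does} give $\Sigma(\phi)n\in \Lp^2$---yields only $g\in \Lp^2(\Lp^2(\partial D))\mapsto(\eta,\dot\eta)\in \C^0(\Lp^2\times \H1^{-1})$ for zero initial data. After reducing to $g(0)=0$ and zero initial data (subtract the Lam\'e-harmonic extension of $g(0)$ and the homogeneous-Dirichlet evolution of the adjusted initial data), apply this to the time-differentiated problem with boundary datum $\dot g\in \Lp^2(\Lp^2(\partial D))$: this gives $\dot\eta\in \C^0(\Lp^2)$ and $\ddot\eta\in \C^0(\H1^{-1})$. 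Finally the equation $\Div\Sigma(\eta(t))=\ddot\eta(t)\in \H1^{-1}(D)$ together with the trace $\eta(t)|_{\partial D}=g(t)\in \H1^{1/2}(\partial D)$ and elliptic regularity (Theorem~\ref{thm:elliptic_lame} with $s=0$) recovers $\eta\in \C^0(\H1^1)$. It is this last elliptic step that explains why the natural hypothesis on $g$ is $\C^0(\H1^{1/2})\cap \H1^1(\Lp^2)$ rather than $\H1^1((0,T)\times\partial D)$.
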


The following result concerns the ellitpic Lamé system. 
\begin{thm}
\cite[Ex I.6.2c and Thm I.6.3.1]{agranovich}\label{thm:elliptic_lame}
Let $D\subset\mathbb{R}^{3}$ be a domain with boundary $\partial D$
of class $\C^{2,1}$ and let $0\leq s\leq2$. Consider given $f\in\H1^{s}(D)$
and $g\in\H1^{1/2+s}(\partial D)$. If $\eta\in\H1^{1}(D)$ solves
\[
\begin{cases}
\begin{array}{rcll}
-\Div(\Sigma(\eta)) & = & f & \text{in }D,\\
\eta & = & g & \text{on }\partial D,
\end{array}\end{cases}
\]
then it satisfies the estimate 
\[
\Vert\eta\Vert_{\H1^{1+s}(D)}\leq C\left(\Vert f\Vert_{\H1^{-1+s}(D)}+\Vert g\Vert_{\H1^{1/2+s}(\partial D)}\right).
\]
\end{thm}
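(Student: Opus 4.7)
The strategy is to reduce to homogeneous Dirichlet data, prove the estimate for integer $s\in\{0,1,2\}$ by classical elliptic regularity for strongly elliptic systems, and interpolate in between.

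\textbf{Reduction to zero boundary data.} By the standard trace right-inverse on $\partial D \in \C^{2,1}$, for each $0\le s \le 2$ there exists an extension $G\in\H1^{1+s}(D)$ with $G\vert_{\partial D}=g$ and $\Vert G\Vert_{\H1^{1+s}(D)}\le C\Vert g\Vert_{\H1^{1/2+s}(\partial D)}$. Setting $\tilde\eta:=\eta-G\in\H1^1_0(D)$, the function $\tilde\eta$ solves $-\Div(\Sigma(\tilde\eta))=\tilde f$ in $D$ with $\tilde\eta=0$ on $\partial D$, where $\tilde f := f+\Div(\Sigma(G))\in\H1^{-1+s}(D)$ satisfies
\begin{equation*}
\Vert\tilde f\Vert_{\H1^{-1+s}(D)} \le C\bigl(\Vert f\Vert_{\H1^{-1+s}(D)}+\Vert g\Vert_{\H1^{1/2+s}(\partial D)}\bigr).
\end{equation*}
It therefore suffices to prove $\Vert\tilde\eta\Vert_{\H1^{1+s}(D)}\le C\Vert\tilde f\Vert_{\H1^{-1+s}(D)}$ for $\tilde\eta\in\H1^1_0(D)$.

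\textbf{Integer endpoints.} For $s=0$, testing the equation against $\tilde\eta$ and invoking Korn's first inequality on $\H1^1_0(D)$ (valid since $\lambda_1>0$) yields
\begin{equation*}
c\Vert\tilde\eta\Vert_{\H1^1(D)}^2 \le \int_D\Sigma(\tilde\eta):\varepsilon(\tilde\eta)\,\dx y = \langle\tilde f,\tilde\eta\rangle_{\H1^{-1},\H1^1_0} \le \Vert\tilde f\Vert_{\H1^{-1}(D)}\Vert\tilde\eta\Vert_{\H1^1(D)},
\end{equation*}
which gives the $s=0$ case. For $s=1,2$, the Lam{é} operator is strongly elliptic: its symbol $L(\xi)v = \lambda_1|\xi|^2 v + (\lambda_1+\lambda_2)(\xi\cdot v)\xi$ satisfies the Legendre--Hadamard condition with constant $\lambda_1>0$, and Dirichlet boundary conditions automatically satisfy the Shapiro--Lopatinski{\u\i} covering condition. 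Classical Agmon--Douglis--Nirenberg boundary regularity theory (localization, flattening of $\partial D$, tangential difference quotients on the half-space, and normal derivatives recovered from the equation) then yields
\begin{equation*}
\Vert\tilde\eta\Vert_{\H1^{1+s}(D)} \le C\bigl(\Vert\tilde f\Vert_{\H1^{-1+s}(D)} + \Vert\tilde\eta\Vert_{\Lp^2(D)}\bigr),
\end{equation*}
and the lower-order term is absorbed using the $s=0$ bound.

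\textbf{Interpolation.} For non-integer $s\in(0,1)\cup(1,2)$, real interpolation between the established endpoint estimates in the scales $\H1^{-1+s}(D)\to\H1^{1+s}(D)$ and $\H1^{1/2+s}(\partial D)$ gives the estimate in full generality.

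\textbf{Main obstacle.} The most delicate point is the $s=2$ endpoint, where the standard boundary regularity argument requires taking a second-order difference quotient along the flattened boundary; this forces the hypothesis $\partial D\in\C^{2,1}$, which is precisely what is assumed. Once this endpoint is in hand, the remaining steps reduce to routine application of well-known results. A minor care is needed to check that the trace right-inverse in the reduction step is valid for the full range $0\le s\le 2$, which again is guaranteed by $\partial D\in\C^{2,1}$.
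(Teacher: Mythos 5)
The paper does not prove this statement at all: Theorem \ref{thm:elliptic_lame} sits in the appendix subsection that opens with ``We recall optimal regularity results for solutions to the Stokes and the Lam\'e system,'' and it is invoked as a known classical fact, in parallel with the cited Stokes analogue (Theorem \ref{thm:elliptic_stokes}). Your outline is the standard and correct way to establish it: reduction to homogeneous Dirichlet data by a trace right-inverse, the $s=0$ case by testing and Korn's first inequality on $\H1_0^1(D)$, the integer cases $s=1,2$ by Agmon--Douglis--Nirenberg regularity for the strongly elliptic Lam\'e system (with $\C^{2,1}$ boundary needed exactly at $s=2$), and interpolation in between. Two minor remarks: the hypothesis $f\in\H1^{s}(D)$ is stronger than what your argument uses (only $\Vert f\Vert_{\H1^{-1+s}(D)}$ enters, which matches the stated estimate); and at the interpolation endpoint $s=1/2$ the space $(\H1^{-1}(D),\Lp^{2}(D))_{1/2,2}$ is the dual of a Lions--Magenes-type space rather than $\H1^{-1/2}(D)$ itself, but the continuous embedding goes in the right direction, so the claimed bound still follows. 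No gap.
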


\bibliographystyle{alpha}
\bibliography{literatur}

\end{document}